\newtheorem{thm}{Theorem}[section]
\newtheorem{prop}[thm]{Proposition}
\theoremstyle{definition}
\newtheorem{defn}[thm]{Definition}
\newtheorem{example}[thm]{Example}
\theoremstyle{remark}
\newtheorem{rem}[thm]{Remark}
\numberwithin{equation}{section}
\begin{document}
\title[Generalized weighted pseudo-almost automorphic...]{Generalized weighted pseudo-almost periodic solutions and generalized weighted pseudo-almost automorphic solutions of abstract Volterra integro-differential inclusions}

\author{Marko Kosti\' c}
\address{Faculty of Technical Sciences,
University of Novi Sad,
Trg D. Obradovi\' ca 6, 21125 Novi Sad, Serbia}
\email{marco.s@verat.net}

{\renewcommand{\thefootnote}{} \footnote{2010 {\it Mathematics
Subject Classification.} 44A35, 42A75, 47D06, 34G25, 35R11.
\\ \text{  }  \ \    {\it Key words and phrases.} Weighted pseudo-almost periodicity, weighted pseudo-almost automorphy, convolution products, abstract Volterra integro-differential inclusions, multivalued linear operators.
\\  \text{  } The author is partially supported by grant 174024 of Ministry
of Science and Technological Development, Republic of Serbia.}}

\begin{abstract}
In this paper, we analyze the existence and uniqueness of generalized weighted pseudo-almost automorphic solutions of abstract Volterra integro-differential inclusions in Banach spaces. The main results are devoted to the study of various types of weighted pseudo-almost periodic (automorphic) properties of convolution products. We illustrate our abstract results with some examples and possible applications.
\end{abstract}
\maketitle

\section{Introduction and preliminaries}

The class of weighted pseudo-almost periodic functions with values in Banach spaces was introduced by T. Diagana in \cite{tokos1} (2006), while the class of weighted pseudo-almost automorphic functions with values in Banach spaces was introduced by J. Blot, G. M. Mophou, G. M. N'Gu\' er\' ekata and D. Pennequin in \cite{blot-weighted} (2009).
In this paper, we analyze the existence and uniqueness of generalized weighted pseudo-almost periodic solutions and generalized weighted pseudo-almost automorphic solutions of abstract Volterra integro-differential inclusions in Banach spaces. 

Concerning already made applications to abstract Volterra integro-differential equations, we will mention here only a few important results obtained so far. In \cite[Chapter 10]{diagana},
T. Diagana has examined the existence and uniqueness of weighted pseudo almost-periodic solutions of
autonomous partial differential
equation
\begin{align}\label{zvone-stepa}
\frac{d}{dt}\Bigl[ u^{\prime \prime}(t)+f(t,Bu(t)) \Bigr]=w(t)Au(t)+g(t,Cu(t)),\quad t\in {\mathbb R},
\end{align}
where $A$ is a sectorial operator on $X$, $B$ and $C$ are closed linear operators acting on $X$, and $f : {\mathbb R} \times  X \rightarrow X ,$  $g :{\mathbb R} \times  X \rightarrow X$ are pseudo almost-periodic functions in $t\in {\mathbb R}$ uniformly in $x\in X.$ The results on well-posedness of \eqref{zvone-stepa} clearly apply in the study of qualitative properties of solutions of the abstract nonautonomous third-order differential equation
\begin{align*}
u^{\prime \prime \prime}+B(t)u^{\prime}+A(t)u=h(t,u),\quad t\in {\mathbb R};
\end{align*}
see \cite[Chapter 10]{diagana} for more details. Motivated by the ideas of T. Diagana, a great number of authors has considered 
the qualitative properties of solutions for various types of the Sobolev-type differential equations. For example, in \cite{bian-filomat}, Y.-K. Chang and Y.-T. Bian have investigated
the weighted asymptotic behaviour of the Sobolev-type differential equation
$$
\frac{d}{dt}\Bigl[ u(t)+f(t,u(t)) \Bigr]=A(t)u(t)+g(t,u(t)),\quad t\in {\mathbb R},
$$
where $A(t) : D \subseteq X \rightarrow X$ is a family of densely defined closed linear operators on a common domain $D$,
independent of time $t\in {\mathbb R},$ $f : {\mathbb R} \times  X \rightarrow X$ is a weighted pseudo-almost automorphic function and $g :{\mathbb R} \times  X \rightarrow X$
is a Stepanov-like weighted pseudo-almost automorphic function satisfying some extra conditions. Here, it is worth noting that
the class of weighted Stepanov-like pseudo-almost
automorphic functions has been introduced by Z. Xia and M. Fan in \cite{stepanoff-xia}, where the authors have analyzed the existence and uniqueness of such solutions for the following abstract semilinear integro-differential equation:
$$
u(t)=g(t)+\int^{t}_{-\infty}a(t-s)f(s,u(s))\, ds,\quad t\in {\mathbb R},
$$
under certain conditions.
In \cite{abbas-indian},
S. Abbas, V. Kavitha and R. Murugesu have examined Stepanov-like weighted pseudo almost automorphic solutions
to the following fractional order abstract integro-differential equation:
$$
D_{t}^{\alpha}u(t)=Au(t)+D_{t}^{\alpha -1}f(t,u(t),Ku(t)),\quad t\in {\mathbb R},
$$
where
$
Ku(t)=\int^{t}_{-\infty}k(t-s)h(s,u(s))\, ds,$  $t\in {\mathbb R},
$
$1<\alpha<2,$ $A$ is a sectorial operator with domain and range in $X,$ of negative sectorial type $\omega<0,$ the function $k(t)$ is exponentially decaying,
the functions $f : {\mathbb R} \times X \times X \rightarrow X$ and
$h : {\mathbb R} \times X \rightarrow X$ are Stepanov-like weighted pseudo-almost automorphic in time for
each fixed elements of $X\times X$ and $X,$ respectively, satisfying some extra conditions (cf. also T. Diagana \cite{diagana-stepanoff}).

Weighted pseudo-almost automorphic solutions to functional differential equations with infinite delay have been recently considered by Y.-K. Chang and S. Zheng in \cite{chang-ejde}. They have analyzed the abstract nonautonomous differential equation
$$
\frac{d}{dt}\Bigl[ u(t)+f(t,u_{t}) \Bigr]=A(t)u(t)+g(t,u_{t}),\quad t\in {\mathbb R},
$$
where $A(t) : D \subseteq X \rightarrow X$ is a family of densely defined closed linear operators on a common domain $D$,
independent of time $t\in {\mathbb R},$ 
the history $u_{t} : (0,\infty] \rightarrow X$ defined by $u_{t}(\cdot):=u(t+\cdot)$
belongs to some abstract phase space ${\mathcal B}$ defined
axiomatically, and 
$f : {\mathbb R} \times {\mathcal B} \rightarrow X,$ $g :{\mathbb R} \times  {\mathcal B} \rightarrow X$
fulfill some conditions.

Fractional calculus and fractional differential equations have received much
attention recently due to its wide range of applications in various fields of science, such as mathematical physics,
engineering, aerodynamics, biology, chemistry, economics etc (see e.g. \cite{bajlekova}, \cite{Diet}, \cite{kilbas}-\cite{knjigaho} and \cite{prus}-\cite{samko}). In this paper, we essentially use only
the Weyl-Liouville fractional derivatives (for more details, we refer the reader to the paper \cite{relaxation-peng}
by J. Mu, Y. Zhoa and L. Peng). The Weyl-Liouville fractional derivative
$D_{t,+}^{\gamma}u(t)$ of order $\gamma \in (0,1)$ is defined for those continuous functions
$u : {\mathbb R} \rightarrow X$
such that $t\mapsto \int_{-\infty}^{t}g_{1-\gamma}(t-s)u(s)\, ds,$ $t\in {\mathbb R}$ is a well-defined continuously differentiable mapping, by
$$
D_{t,+}^{\gamma}u(t):=\frac{d}{dt}\int_{-\infty}^{t}g_{1-\gamma}(t-s)u(s)\, ds,\quad t\in {\mathbb R};
$$
here and hereafter, $g_{\zeta}(t):=t^{\zeta-1}/\Gamma(\zeta),$ $t>0,$ where $\Gamma (\cdot)$ denotes the Gamma function ($\zeta>0$).

The organization of paper is described as follows. In Subsection \ref{sajt-mlo}, we present a brief overview of definitions and results about multivalued linear operators in Banach spaces. The main aim of Section \ref{section1} is to analyze various generalizations of almost periodic functions and almost automorphic functions; besides some new notions introduced, like Besicovitch pseudo-almost automorphy, the only original contribution here is Proposition \ref{ramanujan} in which we show that the class of
Weyl-$p$-vanishing functions (extended by zero outside $[0,\infty);$ see \cite{weyl} for the notion) is contained in the Weyl ergodic type space $W_{p}PAA_{0}({\mathbb R} : X),$ introduced by S. Abbas in \cite{sead-abas}; we paid special attention to generalized two-parameter almost automorphic functions. The main aim of Section \ref{weighted-primonja} is to inscribe the basic facts about weighted pseudo-almost periodic functions, weighted pseudo-almost automorphic functions and their generalizations; in Subsection \ref{PIVOTI}, we repeat some known composition principles for these classes. In Section \ref{weighted-primonja}, we introduce the notions of a weighted Weyl $p$-pseudo almost automorphic function and a weighted Besicovitch $p$-pseudo almost automorphic function, investigating also 
the convolution invariance of space $B^{1}WPAA_{0}({\mathbb R},X, \rho_{1}, \rho_{2})$ and translation invariance of space 
$PAP_{0}({\mathbb R}, X,\rho_{1},\rho_{2});$ see Section \ref{section1}-Section \ref{weighted-primonja} for the notions.
Our main results are stated in Section \ref{konvolucije}, where we investigate the generalized weighted pseudo-almost periodic (automorphic) properties of convolution products; in this paper, we are primarily concerned with the infinite convolution product 
$\int^{\cdot}_{-\infty}R(\cdot-s)f(s)\, ds,$ where $(R(t))_{t>0}$ is a strongly continuous operator family.
Weighted pseudo-almost automorphic solutions of semilinear (fractional) Cauchy inclusions are analyzed in Section \ref{semilinear-spring}, where we also provide some applications in the analysis of the following abstract Cauchy inclusion of first order
\begin{align*}
u^{\prime}(t)\in {\mathcal A}u(t)+f(t),\ t\in {\mathbb R},
\end{align*}
and its fractional relaxation analogue
\begin{align*}
D_{t,+}^{\gamma}u(t)\in {\mathcal A}u(t)+f(t),\ t\in {\mathbb R},
\end{align*}
where $D_{t,+}^{\gamma}$ denotes the Riemann-Liouville fractional derivative of order $\gamma \in (0,1),$ and
$f : {\mathbb R} \rightarrow X$ satisfies certain properties,
as well as their semilinear analogues
\begin{align}\label{favini}
u^{\prime}(t)\in {\mathcal A}u(t)+f(t,u(t)),\ t\in {\mathbb R},
\end{align}
and
\begin{align}\label{left}
D_{t,+}^{\gamma}u(t)\in {\mathcal A}u(t)+f(t,u(t)),\ t\in {\mathbb R},
\end{align}
where
$f : {\mathbb R} \times X \rightarrow X$ satisfies certain applications. Here, ${\mathcal A}$ is a closed multivalued linear operator.  
Before we switch to our first subsection, it should be emphasized that our results seem to be new even for a class of almost sectorial operators \cite{pb1}. 

\subsection{Multivalued linear operators}\label{sajt-mlo}

In this subsection, we will present some necessary definitions from the theory of multivalued
linear operators. For more details about this topic, we refer the reader to the monographs by R. Cross \cite{cross} and A. Favini, A. Yagi \cite{faviniyagi}.

Let $X$ and $Y$ be two Banach spaces over the field of complex numbers.
A multivalued map ${\mathcal A} : X \rightarrow P(Y)$ is said to be a multivalued
linear operator (MLO) iff the following two conditions hold:
\begin{itemize}
\item[(i)] $D({\mathcal A}) := \{x \in X : {\mathcal A}x \neq \emptyset\}$ is a linear submanifold of $X$;
\item[(ii)] ${\mathcal A}x +{\mathcal A}y \subseteq {\mathcal A}(x + y),$ $x,\ y \in D({\mathcal A})$
and $\lambda {\mathcal A}x \subseteq {\mathcal A}(\lambda x),$ $\lambda \in {\mathbb C},$ $x \in D({\mathcal A}).$
\end{itemize}
If $X=Y,$ then we say that ${\mathcal A}$ is an MLO in $X.$
We know that the equality $\lambda {\mathcal A}x + \eta {\mathcal A}y = {\mathcal A}(\lambda x + \eta y)$ holds
for every $x,\ y\in D({\mathcal A})$ and for every $\lambda,\ \eta \in {\mathbb C}$ with $|\lambda| + |\eta| \neq 0.$ If ${\mathcal A}$ is an MLO, then ${\mathcal A}0$ is a linear submanifold of $Y$
and ${\mathcal A}x = f + {\mathcal A}0$ for any $x \in D({\mathcal A})$ and $f \in {\mathcal A}x.$ Set $R({\mathcal A}):=\{{\mathcal A}x :  x\in D({\mathcal A})\}.$
Then the set $N({\mathcal A}):={\mathcal A}^{-1}0 = \{x \in D({\mathcal A}) : 0 \in {\mathcal A}x\}$ is called the kernel space
of ${\mathcal A}.$ The inverse ${\mathcal A}^{-1}$ of an MLO is defined through
$D({\mathcal A}^{-1}) := R({\mathcal A})$ and ${\mathcal A}^{-1} y := \{x \in D({\mathcal A}) : y \in {\mathcal A}x\}$.
It can be easily verified that ${\mathcal A}^{-1}$ is an MLO in $X,$ as well as that $N({\mathcal A}^{-1}) = {\mathcal A}0$
and $({\mathcal A}^{-1})^{-1}={\mathcal A}.$ If $N({\mathcal A}) = \{0\},$ i.e., if ${\mathcal A}^{-1}$ is
single-valued, then ${\mathcal A}$ is said to be injective. Assuming ${\mathcal A},\ {\mathcal B} : X \rightarrow P(Y)$ are two MLOs, then we define its sum ${\mathcal A}+{\mathcal B}$ by $D({\mathcal A}+{\mathcal B}) := D({\mathcal A})\cap D({\mathcal B})$ and $({\mathcal A}+{\mathcal B})x := {\mathcal A}x +{\mathcal B}x,$ $x\in D({\mathcal A}+{\mathcal B}).$
It is clear that ${\mathcal A}+{\mathcal B}$ is an MLO. We write ${\mathcal A} \subseteq {\mathcal B}$ iff $D({\mathcal A}) \subseteq D({\mathcal B})$ and ${\mathcal A}x \subseteq {\mathcal B}x$
for all $x\in D({\mathcal A}).$ Products, integer powers and multiplication with scalar constants are well-known operations for MLOs (\cite{faviniyagi}).

We say that an MLO ${\mathcal A} : X\rightarrow P(Y)$ is closed iff for any
sequences $(x_{n})$ in $D({\mathcal A})$ and $(y_{n})$ in $Y$ such that $y_{n}\in {\mathcal A}x_{n}$ for all $n\in {\mathbb N}$ we have that $\lim_{n \rightarrow \infty}x_{n}=x$ and
$\lim_{n \rightarrow \infty}y_{n}=y$ imply
$x\in D({\mathcal A})$ and $y\in {\mathcal A}x.$

Suppose now that ${\mathcal A}$ is an MLO in $X.$ The resolvent set of ${\mathcal A},$ $\rho({\mathcal A})$ for short, is defined as the union of those complex numbers
$\lambda \in {\mathbb C}$ for which
\begin{itemize}
\item[(i)] $X= R(\lambda-{\mathcal A})$;
\item[(ii)] $(\lambda - {\mathcal A})^{-1}$ is a single-valued linear continuous operator on $X.$
\end{itemize}
The operator $\lambda \mapsto (\lambda -{\mathcal A})^{-1}$ is called the resolvent of ${\mathcal A}$ ($\lambda \in \rho({\mathcal A})$). Any MLO with non-empty resolvent set is closed and the Hilbert resolvent equation holds in our framework.
For further information about multivalued linear operators and abstract degenerate integro-differential equations, we refer the reader to the monographs \cite{carol}, \cite{cross}, \cite{faviniyagi}, \cite{FKP}, \cite{me152} and \cite{svir-fedorov}.

\section{Various generalizations of almost periodic functions and almost automorphic functions}\label{section1}

The concept of almost periodicity was introduced by Danish mathematician H. Bohr around 1924-1926 and later generalized by many other authors (cf. \cite{diagana}, \cite{gaston}, \cite{nova-mono} and \cite{30} for more details on the subject).
Let $I={\mathbb R}$ or $I=[0,\infty),$ and let $f : I \rightarrow X$ be continuous. Given $\epsilon>0,$ we call $\tau>0$ an $\epsilon$-period for $f(\cdot)$ iff
$
\| f(t+\tau)-f(t) \| \leq \epsilon,$ $t\in I.
$
The set constituted of all $\epsilon$-periods for $f(\cdot)$ is denoted by $\vartheta(f,\epsilon).$ It is said that $f(\cdot)$ is almost periodic, a.p. for short, iff for each $\epsilon>0$ the set $\vartheta(f,\epsilon)$ is relatively dense in $I,$ which means that
there exists $l>0$ such that any subinterval of $I$ of length $l$ meets $\vartheta(f,\epsilon)$.

Let $f : {\mathbb R} \rightarrow X$ be continuous. Then we say that
$f(\cdot)$ is almost automorphic, a.a. for short, iff for every real sequence $(b_{n})$ there exist a subsequence $(a_{n})$ of $(b_{n})$ and a map $g : {\mathbb R} \rightarrow X$ such that
\begin{align}\label{first-equ}
\lim_{n\rightarrow \infty}f\bigl( t+a_{n}\bigr)=g(t)\ \mbox{ and } \  \lim_{n\rightarrow \infty}g\bigl( t-a_{n}\bigr)=f(t),
\end{align}
pointwise for $t\in {\mathbb R}.$ In this case, we have that $f\in C_{b}({\mathbb R} : X)$ and the limit function $g(\cdot)$ must be bounded on ${\mathbb R}$ but not necessarily continuous on ${\mathbb R}.$ Furthermore, it is clear that the uniform convergence of one of the limits appearing in \eqref{first-equ} implies the convergenece of the second one in this equation and that, in this case, the function
$f(\cdot)$ has to be almost periodic and the function $g(\cdot)$ has to be continuous.  In the case that the convergence of limits appearing in \eqref{first-equ} is uniform on compact subsets of ${\mathbb R},$ then we say that $f(\cdot)$ is compactly almost automorphic, c.a.a. for short. The vector space consisting of all almost automorphic, resp., compactly almost automorphic functions, is denoted by $AA({\mathbb R} :X),$ resp., $AA_{c}({\mathbb R} :X).$ By Bochner's criterion \cite{diagana}, any almost periodic function has to be compactly almost automorphic. The converse statement is not true, however (\cite{diagana}, \cite{nova-mono}).
It is well-known that the range of an almost automorphic function needs to be a relatively compact subset of $X.$

A continuous function $f : {\mathbb R} \rightarrow X$ is said to be
asymptotically (compact) almost automorphic, a.(c.)a.a. for short, iff there are a function $h\in C_{0}([0,\infty) : X)$ and a (compact) almost automorphic function $q: {\mathbb R} \rightarrow X$ such that
$f(t)=h(t)+q(t),$ $t\geq 0.$ Using Bochner's criterion again, we can easily show that any asymptotically almost periodic function $[0,\infty) \mapsto X$ is asymptotically (compact) almost automorphic. The spaces of almost periodic, almost automorphic, compactly almost automorphic functions, and asymptotically (compact) almost automorphic functions are closed subspaces of $C_{b}({\mathbb R} : X)$ when equipped with the sup-norm. 

Suppose now that $1\leq p <\infty,$ $l>0$ and $f,\ g\in L^{p}_{loc}(I :X),$ where $I={\mathbb R}$ or $I=[0,\infty).$ Define the Stepanov `metric' by
\begin{align*}
D_{S_{l}}^{p}\bigl[f(\cdot),g(\cdot)\bigr]:= \sup_{x\in I}\Biggl[ \frac{1}{l}\int_{x}^{x+l}\bigl \| f(t) -g(t)\bigr\|^{p}\, dt\Biggr]^{1/p}.
\end{align*}
Then there exists 
\begin{align}\label{weyl-metric}
D_{W}^{p}\bigl[f(\cdot),g(\cdot)\bigr]:=\lim_{l\rightarrow \infty}D_{S_{l}}^{p}\bigl[f(\cdot),g(\cdot)\bigr]
\end{align}
in $[0,\infty].$ The distance appearing in \eqref{weyl-metric} is called the Weyl distance of $f(\cdot)$ and $g(\cdot).$ The Stepanov and Weyl `norm' of $f(\cdot)$ are defined by
$$
\bigl\| f  \bigr\|_{S_{l}^{p}}:= D_{S_{l}}^{p}\bigl[f(\cdot),0\bigr]\ \mbox{  and  }\ \bigl\| f  \bigr\|_{W^{p}}:= D_{W}^{p}\bigl[f(\cdot),0\bigr],
$$
respectively.

We say that a function $f\in L^{p}_{loc}(I :X)$ is Stepanov $p$-bounded, $S^{p}$-bounded shortly, iff
$$
\|f\|_{S^{p}}:=\sup_{t\in I}\Biggl( \int^{t+1}_{t}\|f(s)\|^{p}\, ds\Biggr)^{1/p}<\infty.
$$
The above norm turns the space $L_{S}^{p}(I:X)$ consisting of all $S^{p}$-bounded functions into a Banach space.
It is said that a function $f\in L_{S}^{p}(I:X)$ is Stepanov $p$-almost periodic, $S^{p}$-almost periodic or $S^{p}$-a.p. shortly, iff the function
$
\hat{f} : I \rightarrow L^{p}([0,1] :X),
$ defined by
$
\hat{f}(t)(s):=f(t+s),$ $t\in I,\ s\in [0,1]
$
is almost periodic.
It is said that $f\in  L_{S}^{p}([0,\infty): X)$ is asymptotically Stepanov $p$-almost periodic, asymptotically $S^{p}$-a.p. shortly, iff $\hat{f} : [0,\infty) \rightarrow L^{p}([0,1]:X)$ is asymptotically almost periodic.

The following notion of a Stepanov $p$-almost automorphic function has been introduced by G. M. N'Gu\' er\' ekata and A. Pankov in \cite{gaston-pankov}: A function $f\in L_{loc}^{p}({\mathbb R}:X)$ is called Stepanov $p$-almost automorphic, $S^{p}$-almost automorphic or $S^{p}$-a.a. shortly, iff for
every real sequence $(a_{n}),$ there exists a subsequence $(a_{n_{k}})$
and a function $g\in L_{loc}^{p}({\mathbb R}:X)$ such that
\begin{align*}
\lim_{k\rightarrow \infty}\int^{t+1}_{t}\Bigl \| f\bigl(a_{n_{k}}+s\bigr) -g(s)\Bigr \|^{p} \, ds =0
\end{align*}
and
\begin{align*}
\lim_{k\rightarrow \infty}\int^{t+1}_{t}\Bigl \| g\bigl( s-a_{n_{k}}\bigr) -f(s)\Bigr \|^{p} \, ds =0
\end{align*}
for each $ t\in {\mathbb R}$; a function $f\in L_{loc}^{p}([0,\infty):X)$ is said to be asymptotically Stepanov $p$-almost automorphic, asymptotically $S^{p}$-almost automorphic or asymptotically $S^{p}$-a.a. shortly, iff there exists an $S^{p}$-almost automorphic
function $g(\cdot)$ and a function $q\in  L_{S}^{p}([0,\infty): X)$ such that $f(t)=g(t)+q(t),$ $t\geq 0$ and $\hat{q}\in C_{0}([0,\infty) : L^{p}([0,1]: X));$ the vector space consisting of such functions $q(\cdot)$ will be denoted by $S^{p}_{0}([0,\infty) : X).$ It is well known that the $S^{p}$-almost automorphy of $f(\cdot)$ implies the compact almost automorphy of the mapping
$
\hat{f} : I \rightarrow L^{p}([0,1] :X)
$ defined above, with the limit function being $g(\cdot)(s):=g(s+\cdot)$ for a.e. $s\in [0,1],$ so that any $S^{p}$-almost automorphic function $f(\cdot)$ has to be $S^{p}$-bounded ($1\leq p<\infty$). The vector space consisting of all $S^{p}$-almost automorphic functions, resp., asymptotically $S^{p}$-almost automorphic functions, will be denoted by $ AAS^{p}({\mathbb R} : X),$ resp., $ AAAS^{p}([0,\infty) : X).$

If $1\leq p<q<\infty$ and $f(\cdot)$ is Stepanov $q$-almost automorphic, resp., Stepanov $q$-almost periodic, then $f(\cdot)$ is Stepanov $p$-almost automorphic, resp., Stepanov $p$-almost periodic. Furthermore, the (asymptotical) Stepanov $p$-almost periodicity of $f(\cdot)$ for some $p\in [1,\infty)$ implies the (asymptotical) Stepanov $p$-almost automorphy of $f(\cdot).$
It is a well-known fact that if $f(\cdot)$ is an almost periodic (respectively, a.a.p., a.a., a.a.a.) function
then $f(\cdot)$ is also $S^p$-almost periodic (resp., asymptotically $S^p$-a.p.,  $S^p$-a.a., asymptotically $S^p$-a.a.) for $1\leq p <\infty.$ The converse statement is not true, in general.

We refer the reader to \cite{weyl} for the notions of an (equi-)Weyl-$p$-almost periodic function and an 
(equi-)Weyl-$p$-vanishing function ($1\leq p <\infty$); see also \cite{deda} for scalar-valued case.
Denote by $W^{p}_{0}([0,\infty):X)$ and $e-W^{p}_{0}([0,\infty):X)$ the sets consisting of all Weyl-$p$-vanishing functions and equi-Weyl-$p$-vanishing functions, respectively. Then we know that $e-W^{p}_{0}([0,\infty):X)\subseteq W^{p}_{0}([0,\infty):X).$

The concepts of Weyl almost
automorphy and Weyl pseudo-almost automorphy, more general than those of
Stepanov almost automorphy and Stepanov pseudo-almost automorphy, were introduced by S. Abass \cite{sead-abas} in 2012. We will first give the definition of a Weyl
$p$-almost automorphic function.

\begin{defn}\label{stea-weyl}
Let $p\geq 1.$ Then we say that a function $f\in L_{loc}^{p}({\mathbb R} : X)$ is Weyl
$p$-almost automorphic iff for
every real sequence $(s_{n}),$ there exist a subsequence $(s_{n_{k}})$
and a function $f^{\ast} \in L^{p}
_{loc}({\mathbb R} : X)$ such that
\begin{align*}
\lim_{k\rightarrow \infty} \lim_{l\rightarrow +\infty}\frac{1}{2l}\int^{l}_{-l}\Bigl \| f\bigl( t+s_{n_{k}}+x\bigr) -f^{\ast}(t+x)\Bigr \|^{p} \, dx =0
\end{align*}
and
\begin{align*}
\lim_{k\rightarrow \infty} \lim_{l\rightarrow +\infty}\frac{1}{2l}\int^{l}_{-l}\Bigl \| f^{\ast}\bigl( t-s_{n_{k}}+x\bigr) -f(t+x)\Bigr \|^{p} \, dx =0
\end{align*}
for each $ t\in {\mathbb R}.$ The set of all such functions are denoted
by $W^{p}AA({\mathbb R} : X).$
\end{defn}

The set $W^{p}AA({\mathbb R} : X),$
equipped with the usual operations of pointwise addition of functions and multiplication of functions with scalars, forms a vector space.  Weyl-$p$-almost periodic functions forms a linear subspace of $W^{p}AA({\mathbb R} : X).$

The class of Stepanov pseudo-almost automorphic functions has been introduced by T. Diagana in \cite{diagana-definicija} (cf. also the paper \cite{xl-fany}, where Z. Fan, J. Liang and T.-J. Xiao have analyzed composition theorems for the corresponding class):

\begin{defn}\label{novi-weighted-auto-stepa-aw} 
Let $1\leq p<\infty.$ A Stepanov $p$-bounded function $f(\cdot)$ is said to be Stepanov $p$-pseudo almost periodic (automorphic) iff it admits a decomposition
$
f(t)=g(t)+q(t),
$ $t\in {\mathbb R},$ where $g(\cdot)$ is $S^{p}$-almost periodic (automorphic) and $q(\cdot) \in L_{loc}^{p}({\mathbb R} : X)$ satisfies
\begin{align}\label{stepa-auto-durak}
\lim_{T\rightarrow +\infty}\frac{1}{2T}\int^{T}_{-T}\Biggl[ \int^{t+1}_{t}\|q(s)\|^{p}\, ds \Biggr]^{1/p} \, dt =0.
\end{align}
Denote by $S^{p}PAP({\mathbb R} :X)$ ($S^{p}PAA({\mathbb R} :X)$) the collection of such functions,
and by $S^{p}PAA_{0}({\mathbb R} : X)$ the collection of locally $p$-integrable $X$-valued functions $q(\cdot)$ such that \eqref{stepa-auto-durak} holds.
\end{defn}

It is well known that $S^{p}PAA({\mathbb R},X, \rho_{1}, \rho_{2})$, endowed with the Stepanov norm, is a Banach space which contains all pseudo-almost automorphic functions. A similar statement holds for the space $S^{p}PAP({\mathbb R},X, \rho_{1}, \rho_{2}).$

\begin{defn}\label{stea-weyl-abas} \cite{sead-abas}
Let $p\geq 1.$ Then we say that a function $f\in L_{loc}^{p}({\mathbb R} : X)$ is Weyl 
$p$-pseudo almost automorphic iff $f(\cdot)=g(\cdot)+q(\cdot),$ where $g(\cdot)$ is Weyl
$p$-almost automorphic and $q \in L^{p}
_{loc}({\mathbb R} : X)$ satisfies
\begin{align}\label{ne-mu-je-sd-sead}
 \lim_{T\rightarrow +\infty}\frac{1}{2T}\int^{T}_{-T}\Biggl[ \lim_{l\rightarrow +\infty}\frac{1}{2l}\int^{x+l}_{x-l}\bigl\| q(t)\|^{p}\, dt\Biggr]^{1/p} \, dx =0.
\end{align}
The set of all such functions are denoted
by $W_{p}PAA({\mathbb R} : X),$ while the set consisting of all functions $q \in L^{p}
_{loc}({\mathbb R} : X)$ satisfying \eqref{ne-mu-je-sd-sead} is denoted by $W_{p}PAA_{0}({\mathbb R} : X).$
\end{defn}

Now we will prove that the class of Weyl-$p$-vanishing functions (extended by zero outside $[0,\infty)$) is contained in $W_{p}PAA_{0}({\mathbb R} : X):$ 

\begin{prop}\label{ramanujan}
Let $1\leq p <\infty$, and let $q\in L^{p}_{loc}([0,\infty) : X)$ be a Weyl-$p$-vanishing function. Let $q_{e} \in L^{p}_{loc}({\mathbb R}: X)$ be given by $q_{e}(t):=q(t),$ $t\geq 0$ and $q_{e}(t):=0,$ $t<0.$  Then $q_{e}\in W_{p}PAA_{0}({\mathbb R} : X).$ 
\end{prop}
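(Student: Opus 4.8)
The plan is to unwind both limits in the definition \eqref{ne-mu-je-sd-sead} of $W_{p}PAA_{0}({\mathbb R} : X)$ and show that they vanish for $q_{e}(\cdot)$, using the hypothesis that $q(\cdot)$ is Weyl-$p$-vanishing, i.e.\ that $\lim_{l\to\infty}\sup_{x\ge 0}\big[\frac1l\int_x^{x+l}\|q(t)\|^p\,dt\big]^{1/p}=0$ (this being the notion from \cite{weyl}; an analogous non-supremum version for the $W^p_0$ class). First I would fix $\eps>0$ and, using the Weyl-$p$-vanishing property, choose $L>0$ such that for all $l\ge L$ the inner Weyl-type average of $\|q\|^p$ over any window of length $2l$ is at most $\eps^p$; the point is that for $l$ large the window $[x-l,x+l]$ either lies entirely in $(-\infty,0)$ (where $q_e\equiv 0$, so the average is $0$) or it contains the point $0$, in which case $[x-l,x+l]\cap[0,\infty)$ is an interval of length $\le 2l$ contained in $[0,2l]$ or similar, and the average is controlled by the Weyl-vanishing modulus. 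So the inner expression
\[
M(x):=\lim_{l\to\infty}\frac{1}{2l}\int_{x-l}^{x+l}\|q_e(t)\|^p\,dt
\]
satisfies $M(x)^{1/p}\le \eps$ for \emph{every} $x\in{\mathbb R}$; in fact $M(x)=0$ for every $x$, since once $l\to\infty$ the prefactor $\frac1{2l}$ kills the fixed-location contribution.

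That observation is essentially the whole point: the inner limit $M(x)$ is identically zero on ${\mathbb R}$. Indeed, for $l$ so large that $x+l>0$, we have $\int_{x-l}^{x+l}\|q_e(t)\|^p\,dt=\int_{\max(0,x-l)}^{x+l}\|q(t)\|^p\,dt$, and writing $x+l=:y$ this is $\int_{\max(0,x-l)}^{y}\|q(t)\|^p\,dt \le \int_0^{y}\|q(t)\|^p\,dt$. Dividing by $2l$ and letting $l\to\infty$ (so $y\to\infty$ with $y/(2l)\to 1/2$), one uses that $\frac1y\int_0^y\|q(t)\|^p\,dt\to 0$ — which follows from the Weyl-$p$-vanishing property applied with base point $0$ — to conclude $M(x)=0$. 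Hence the outer integrand in \eqref{ne-mu-je-sd-sead} is the zero function, its average over $[-T,T]$ is $0$ for every $T$, and the outer limit is trivially $0$. Therefore $q_e\in W_{p}PAA_{0}({\mathbb R}:X)$.

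The one place requiring care — and the main obstacle — is the justification that $\frac1y\int_0^y\|q(t)\|^p\,dt\to 0$ as $y\to\infty$ genuinely follows from $q$ being Weyl-$p$-vanishing in the sense of \cite{weyl}, and more generally that the double limit in \eqref{ne-mu-je-sd-sead} is handled in the correct order (inner $l\to\infty$ first, then outer $T\to\infty$). I would address this by recalling the precise definition of Weyl-$p$-vanishing from \cite{weyl}: $q\in W^p_0([0,\infty):X)$ means $\lim_{l\to\infty}\limsup_{t\to\infty}\big(\frac1l\int_t^{t+l}\|q(s)\|^p\,ds\big)^{1/p}=0$, or the equi-version with a supremum over $t\ge 0$. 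Splitting $[0,y]$ into roughly $y/l$ consecutive blocks of length $l$, averaging, and then optimizing over $l$ gives the Cesàro-type decay $\frac1y\int_0^y\|q\|^p\to 0$; this is the only genuinely non-formal estimate and I would present it as a short lemma-style computation. Everything else — the extension-by-zero bookkeeping and the vanishing of the outer average once the inner limit is identically zero — is routine. If instead one uses the equi-Weyl-$p$-vanishing hypothesis, the argument is even cleaner since the sup over base points makes the bound on $M(x)$ uniform and immediate, but the stated hypothesis (plain Weyl-$p$-vanishing) already suffices via the block-averaging step.
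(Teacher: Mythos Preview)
Your proposal is correct and follows essentially the same line as the paper's proof: both arguments show that the inner limit
\[
M(x)=\lim_{l\to\infty}\frac{1}{2l}\int_{x-l}^{x+l}\|q_e(t)\|^{p}\,dt
\]
vanishes for every fixed $x\in{\mathbb R}$, after which the outer average over $[-T,T]$ is trivially zero. The paper does this by fixing $x\ge 0$, splitting $\int_0^{x+l}$ according to whether $x\ge t_0(\eps)$ or $x<t_0(\eps)$ (where $t_0(\eps),l_0(\eps)$ come from the Weyl-$p$-vanishing hypothesis), and showing that in either case the quotient is eventually below $2\eps$. Your route via the Ces\`aro-type statement $\frac{1}{y}\int_0^y\|q\|^{p}\,dt\to 0$ is the same idea packaged slightly differently; note that you do not actually need a block-averaging decomposition for this step --- a single application of the Weyl-vanishing bound with base point $t_0(\eps)$ and window length $l=y-t_0(\eps)\ge l_0(\eps)$ already gives $\int_{t_0}^{y}\|q\|^{p}\le 2\eps\,l\le 2\eps\,y$, and the remaining piece $\int_0^{t_0}\|q\|^{p}$ is a fixed constant killed by $1/y$. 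This is exactly the content of the paper's Case~2 computation.
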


\begin{proof}
We only need to prove that \eqref{ne-mu-je-sd-sead} holds with $q(\cdot)$ replaced therein with $q_{e}(\cdot)$, i.e., that
\begin{align*}
 \lim_{T\rightarrow +\infty}\frac{1}{2T}\int^{T}_{0}\Biggl[ \lim_{l\rightarrow +\infty}\frac{1}{2l}\int^{x+l}_{0}\bigl\| q(t)\|^{p}\, dt\Biggr]^{1/p} \, dx =0.
\end{align*}
Let $x\in [0,T]$ be fixed. It suffices to show that
\begin{align}\label{belezore}
 \lim_{l\rightarrow +\infty}\frac{1}{2l}\int^{x+l}_{0}\bigl\| q(t)\|^{p}\, dt=0.
\end{align}
Towards this end, fix a number $\epsilon>0.$ Owing to the fact that  $q(\cdot)$ is Weyl-$p$-vanishing, i.e., that
$$
\lim_{t\rightarrow \infty}\, \lim_{l\rightarrow \infty}\ \ \sup_{x\geq t}\Biggl[ \frac{1}{l}\int_{x}^{x+l}\bigl \| q(s)\bigr\|^{p}\, ds\Biggr]^{1/p}=0,
$$
we have the existence of numbers $l_{0}(\epsilon)>0$ and $t_{0}(\epsilon)>0$ such that
\begin{align}\label{qwer-zore}
\frac{1}{2l}\int_{x}^{x+l}\bigl \| q(s)\bigr\|^{p}\, ds \leq \epsilon,\quad x\geq t_{0}(\epsilon),\ l\geq l_{0}(\epsilon).
\end{align}
Then we have
\begin{align*}
\frac{1}{2l}\int^{x+l}_{0}\bigl\| q(t)\|^{p}\, dt \leq \frac{1}{2l}\Biggl[ \int^{x}_{0}\|q(s)\|^{p}\, ds + \int^{x+l}_{x}\|q(s)\|^{p}\, ds \Biggr],\quad l>0.
\end{align*}
If $x\geq t_{0}(\epsilon),$ then the addend  $(1/2l)\int^{x+l}_{x}\|q(s)\|^{p}\, ds$ is less or equal than $\epsilon$ by \eqref{qwer-zore}, which clearly implies the existence of a number $l_{1}(\epsilon)>0$ such that for each $l\geq l_{1}(\epsilon)$ we have 
\begin{align*}
\frac{1}{2l}\int^{x+l}_{0}\bigl\| q(t)\|^{p}\, dt \leq 2\epsilon .
\end{align*}
If $x< t_{0}(\epsilon),$ then we have
\begin{align*}
\frac{1}{2l}\int^{x+l}_{0}\bigl\| q(t)\|^{p}\, dt & \leq \frac{1}{2l}\Biggl[ \int^{ t_{0}(\epsilon)}_{x}\|q(s)\|^{p}\, ds +\int^{ t_{0}(\epsilon)+l}_{ t_{0}(\epsilon)}\|q(s)\|^{p}\, ds \Biggr]
\\ & \leq\frac{1}{2l}\int^{ t_{0}(\epsilon)}_{x}\|q(s)\|^{p}\, ds +\epsilon,\quad l \geq l_{1}(\epsilon),
\end{align*}
which clearly implies the existence of a number $l_{2}(\epsilon)>l_{1}(\epsilon)$ such that for each $l\geq l_{2}(\epsilon)$ we have 
\begin{align*}
\frac{1}{2l}\int^{x+l}_{0}\bigl\| q(t)\|^{p}\, dt \leq 2\epsilon .
\end{align*}
This yields \eqref{belezore} and completes the proof of proposition.
\end{proof}

For Besicovitch and Besicovitch-Doss generalizations of almost periodic functions in Banach spaces, we refer the reader to \cite{NSJOM-besik}; see also \cite{besik}.
The class of Besicovitch almost automorphic functions has been introduced by F. Bedouhene, N. Challali, O. Mellah, P. Raynaud de Fitte and M. Smaali in \cite{besik-fatajou}. This class extends the class of
Weyl almost automorphic functions and its definition is given as follows:

\begin{defn}\label{stea-weyl}
Let $p\geq 1.$ Then we say that a function $f\in L_{loc}^{p}({\mathbb R} : X)$ is Besicovitch
$p$-almost automorphic iff for
every real sequence $(s_{n}),$ there exist a subsequence $(s_{n_{k}})$
and a function $f^{\ast} \in L^{p}
_{loc}({\mathbb R} : X)$ such that
\begin{align*}
\lim_{k\rightarrow \infty} \limsup_{l\rightarrow +\infty}\frac{1}{2l}\int^{l}_{-l}\Bigl \| f\bigl( t+s_{n_{k}}+x\bigr) -f^{\ast}(t+x)\Bigr \|^{p} \, dx =0
\end{align*}
and
\begin{align*}
\lim_{k\rightarrow \infty} \limsup_{l\rightarrow +\infty}\frac{1}{2l}\int^{l}_{-l}\Bigl \| f^{\ast}\bigl( t-s_{n_{k}}+x\bigr) -f(t+x)\Bigr \|^{p} \, dx =0
\end{align*}
for each $ t\in {\mathbb R}.$ The set of all such functions are denoted
by $B^{p}AA({\mathbb R} : X).$
\end{defn}

It can be easily proved that the set $B^{p}AA({\mathbb R} : X),$
equipped with the usual operations, forms a vector space. In the present situation, the author does not know whether
a Besicovitch $p$-almost periodic function is necessarily Besicovitch $p$-almost automorphic.

Now we will introduce the notion of a Besicovitch 
$p$-pseudo almost automorphic function:

\begin{defn}\label{stea-weyl-abas}
Let $p\geq 1.$ Then we say that a function $f\in L_{loc}^{p}({\mathbb R} : X)$ is Besicovitch
$p$-pseudo almost automorphic iff $f(\cdot)=g(\cdot)+q(\cdot),$ where $g(\cdot)$ is Besicovitch
$p$-almost automorphic and $q \in L^{p}
_{loc}({\mathbb R} : X)$ satisfies
\begin{align*}
 \lim_{T\rightarrow +\infty}\frac{1}{2T}\int^{T}_{-T}\Biggl[ \limsup_{l\rightarrow +\infty}\frac{1}{2l}\int^{x+l}_{x-l}\bigl\| q(t)\|^{p}\, dt\Biggr]^{1/p} \, dx =0.
\end{align*}
The set of all such functions are denoted
by $B_{p}PAA({\mathbb R} : X).$
\end{defn}

A very simple analysis shows that $B_{p}PAA({\mathbb R} : X)$ is a  vector space, as well as that $W_{p}PAA({\mathbb R} : X)\subseteq B_{p}PAA({\mathbb R} : X).$

In order to shorten our study, we will skip here all details concerning two-parameter almost periodic functions and their generalizations; see T. Diagana \cite{diagana} and M. Kosti\' c \cite{nova-mono} for further information in this direction. Regarding generalized two-parameter almost automorphic functions, the following facts should be recalled:
Let $(Y,\|\cdot \|_{Y})$ be another pivot space over the field of complex numbers.
Then we say that a continuous function $F : {\mathbb R} \times Y \rightarrow X$ is almost
automorphic iff
for every sequence of real numbers $(s_{n}')$ there exists a subsequence $(s_{n})$ such
that
$
G(t, y) := \lim_{n\rightarrow \infty}F(t +s_{n}, y)
$
is well defined for each $t \in {\mathbb R}$ and $y\in Y,$ and
$
\lim_{n\rightarrow \infty}G(t -s_{n}, y) = F(t, y)
$
for each $t \in {\mathbb R}$ and $y\in Y.$ The vector space consisting of such functions will be denoted by $AA({\mathbb R} \times Y :X).$

The notion of a pseudo almost-automorphic function was introduced by T.-J. Xiao, J. Liang and J. Zhang  \cite{xlz-pseudo} in 2008. Let us recall that the space of pseudo-almost automorphic functions, denoted shortly by $PAA({\mathbb R} : X),$ is defined as the direct sum of spaces $AA({\mathbb R} : X)$
and $PAP_{0}({\mathbb R} : X),$ where
$PAP_{0}({\mathbb R} : X)$ denotes the space consisting of all bounded continuous functions $\Phi : {\mathbb R} \rightarrow X$ such
that
$$
\lim_{r\rightarrow \infty} \frac{1}{2r}\int^{r}_{-r}\| \Phi(s)\|\, ds=0.
$$
Equipped with the sup-norm, the space $PAA({\mathbb R} : X)$ becomes one of Banach's. A bounded continuous function $f :  {\mathbb R} \times Y \rightarrow X$ is said to be
pseudo-almost automorphic iff $F = G +\Phi,$ where $G \in AA({\mathbb R} \times Y :X)$ and $\Phi \in
PAP_{0}({\mathbb R} \times Y:X);$ here, $PAP_{0}({\mathbb R} \times Y : X)$ denotes the space consisting of all continuous functions $\Phi : {\mathbb R} \times Y \rightarrow X$ such
that $\{\Phi (t,y) : t \in {\mathbb R} \} $ is bounded for all $y\in Y,$ and
$$
\lim_{r\rightarrow \infty} \frac{1}{2r}\int^{r}_{-r}\| \Phi(s,y)\|\, ds=0,
$$
uniformly in $y\in Y.$ The collection of such functions will be denoted henceforth by $PAA({\mathbb R} \times Y:X).$

\begin{defn}\label{stepanov-auto-wr} (see e.g. \cite{opuscula})
Let $1\leq p<\infty,$ and let $f : {\mathbb R} \times Y \rightarrow X$ satisfy that for each $y\in Y$ we have $f(\cdot,y)\in L^{p}_{loc}({\mathbb R} : X).$ Then it is said that $f(\cdot,\cdot)$ is Stepanov $p$-almost automorphic
iff for every $y\in Y$ the mapping $f(\cdot,y)$ is $S^{p}$-almost automorphic; that is, for any real sequence $(a_{n})$ there exist a subsequence $(a_{n_{k}})$ of $(a_{n})$ and a map $g : {\mathbb R} \times Y \rightarrow X$ such that $g(\cdot,y)\in L_{loc}^{p}({\mathbb R}:X)$ for all $y\in Y$ as well as that:
\begin{align*}
\lim_{k\rightarrow \infty}\int^{1}_{0}\Bigl \| f\bigl(t+a_{n_{k}}+s,y\bigr) -g(t+s,y)\Bigr \|^{p} \, ds =0
\end{align*}
and
\begin{align*}
\lim_{k\rightarrow \infty}\int^{1}_{0}\Bigl \| g\bigl( t+s-a_{n_{k}},y\bigr) -f(t+s,y)\Bigr \|^{p} \, ds =0
\end{align*}
for each $ t\in {\mathbb R}$ and for each $y\in Y.$ We denote by $AAS^{p}({\mathbb R} \times Y : X)$ the vector space consisting of all such functions.
\end{defn}

\begin{defn}\label{marko-rw} \cite{diagana-definicija}
A function $f : {\mathbb R}\times Y\rightarrow X$ is said to be $S^{p}$-pseudo almost automorphic iff for each $y\in Y$ we have that $f(\cdot,y)\in L^{p}_{loc}({\mathbb R} :  X)$
and $f(\cdot,\cdot)$ admits a decomposition
$
f(t,y)=g(t,y)+q(t,y),
$ $t\in {\mathbb R},$ where $g(\cdot,\cdot)$ is $S^{p}$-almost automorphic and 
\begin{align}\label{stepa-auto-durak-twoa}
\lim_{T\rightarrow +\infty}\frac{1}{2T}\int^{T}_{-T}\Biggl[ \int^{t+1}_{t}\|q(s,y)\|^{p}\, ds \Biggr]^{1/p}\, dt =0,
\end{align}
uniformly on bounded subsets of $Y.$
Denote by $S^{p}PAA({\mathbb R} \times Y,X, \rho_{1}, \rho_{2})$
the collection of such functions, and by $S^{p}PAA_{0}({\mathbb R} \times Y,X, \rho_{1}, \rho_{2})$ the collection of functions satisfying that for each $y\in Y$ one has that $q(\cdot)$ is a locally $p$-integrable $X$-valued function  and \eqref{stepa-auto-durak-twoa} holds. If $\rho=\rho_{1}=\rho_{2},$ then we write $S^{p}PAA({\mathbb R} \times Y,X, \rho_{1}, \rho_{2})\equiv S^{p}PAA({\mathbb R} \times Y, X,\rho).$ 
\end{defn}

We refer the reader to \cite{element} for various composition principles for almost automorphic functions, pseudo-almost automorphic functions and Stepanov almost automorphic functions. The reader may consult S. Abass \cite{sead-abas} for the notion of a Weyl pseudo-almost automorphic
function as well as for the formulation of a composition principle for Weyl pseudo-almost automorphic
functions. 

\section{Weighted pseudo-almost periodic functions, weighted pseudo-almost automorphic functions and their generalizations}\label{weighted-primonja}

Set ${\mathbb U}:=\{ \rho \in L_{loc}^{1}({\mathbb R}) : \rho(t)>0\mbox{ a.e. }t\in {\mathbb R}\},$ 
${\mathbb U}_{\infty}:=\{ \rho \in {\mathbb U} : \inf_{x\in {\mathbb R}}\rho (x)<\infty\mbox{ and }\nu(T,\rho):=\lim_{T\rightarrow +\infty}\int^{T}_{-T}\rho(t)\, dt =\infty\}$
and
${\mathbb U}_{b}:=L^{\infty}({\mathbb R}) \cap {\mathbb U}_{\infty}.$
Then it is clear that ${\mathbb U}_{b} \subseteq {\mathbb U}_{\infty} \subseteq {\mathbb U}.$
We say that weights $\rho_{1}(\cdot)$ and $\rho_{2}(\cdot)$ are equivalent, $\rho_{1} \sim \rho_{2}$ for short, iff $\rho_{1}/\rho_{2} \in {\mathbb U}_{b}.$ By ${\mathbb U}_{T}$ we denote the space consisting of all weights $\rho \in {\mathbb U}_{\infty}$ which are equivalent with all its translations.

Next, we introduce the spaces consisting of so-called ergodic components, depending on one or two variables. 

Assume that $\rho_{1},\ \rho_{2}\in {\mathbb U}_{\infty}.$ Set
$$
PAP_{0}\bigl({\mathbb R}, X,\rho_{1},\rho_{2}\bigr):=\Biggl\{ f\in C_{b}({\mathbb R}: X) : \lim_{T\rightarrow +\infty}\frac{1}{2\int^{T}_{-T}\rho_{1}(t)\, dt}\int^{T}_{-T}\|f(t)\| \rho_{2}(t)\, dt =0 \Biggr\}
$$
and
\begin{align*}
& PAP_{0}\bigl({\mathbb R} \times Y, X,\rho_{1},\rho_{2}\bigr):=\Biggl\{ f\in C_{b}({\mathbb R} \times Y: X) :
\\ & \lim_{T\rightarrow +\infty}\frac{1}{2\int^{T}_{-T}\rho_{1}(t)\, dt}\int^{T}_{-T}\|f(t,y)\| \rho_{2}(t)\, dt =0,
\mbox{ uniformly on bounded subsets of }Y\Biggr\}.
\end{align*}

The class of weighted pseudo-almost periodic functions was introduced by T. Diagana in \cite{tokos1} (2006); we will slightly generalize the notion from this paper by examining two weight functions, leading to the concept of so-called double-weighted pseudo almost-periodic functions, as it has been done by T. Diagana in \cite{diagana-double}-\cite{diagana-doublep}.

\begin{defn}\label{novi-weighted}
\begin{itemize}
\item[(i)] A function $f\in C_{b}({\mathbb R}: X)$ is said to be weighted pseudo-almost periodic iff it admits a decomposition
$
f(t)=g(t)+q(t),
$ $t\in {\mathbb R},$ where $g(\cdot)$ is almost periodic and $q(\cdot) \in  PAP_{0}({\mathbb R}, X,\rho_{1}, \rho_{2}).$
\item[(ii)]  A function $f(\cdot,\cdot) \in C_{b}({\mathbb R}\times Y: X)$ is said to be weighted pseudo-almost periodic iff it admits a decomposition
$
f(t,y)=g(t,y)+q(t,y),
$ $t\in {\mathbb R},$ where $g(\cdot,\cdot)$ is almost periodic and $q(\cdot,\cdot) \in  PAP_{0}({\mathbb R} \times Y, X,\rho_{1}, \rho_{2}).$
\end{itemize}
Denote by $WPAP({\mathbb R}, X, \rho_{1}, \rho_{2})$  (respectively, $WPAP({\mathbb R} \times Y,X, \rho_{1}, \rho_{2})$)
the vector spaces of such functions. If $\rho=\rho_{1}=\rho_{2},$ then we write $WPAP({\mathbb R}, X, \rho_{1}, \rho_{2})\equiv WPAP({\mathbb R}, X,\rho)$ and $WPAP({\mathbb R} \times Y,X, \rho_{1}, \rho_{2})\equiv WPAP({\mathbb R} \times Y, X,\rho).$ 
\end{defn}

The class of weighted pseudo-almost automorphic
functions has been introduced by J. Blot, G. M. Mophou, G. M. N'Gu\' er\' ekata and D. Pennequin in \cite{blot-weighted}. The following slight generalization is introduced by T. Diagana \cite{diagana-double}-\cite{diagana-doublep} (see also T. Diagana, K. Ezzinbi, M. Miraoui \cite{diagana-doublepp} and S. Abbas, V. Kavitha, R. Murugesu \cite{abbas-indian}):

\begin{defn}\label{novi-weighted-auto}
\begin{itemize}
\item[(i)] A function $f\in C_{b}({\mathbb R}: X)$ is said to be weighted pseudo-almost automorphic iff it admits a decomposition
$
f(t)=g(t)+q(t),
$ $t\in {\mathbb R},$ where $g(\cdot)$ is almost automorphic and $q(\cdot) \in  PAP_{0}({\mathbb R}, X,\rho_{1}, \rho_{2}).$
\item[(ii)]  A function $f(\cdot,\cdot) \in C_{b}({\mathbb R}\times Y: X)$ is said to be weighted pseudo-almost automorphic iff it admits a decomposition
$
f(t,y)=g(t,y)+q(t,y),
$ $t\in {\mathbb R},$ where $g(\cdot,\cdot)$ is almost automorphic and $q(\cdot,\cdot) \in  PAP_{0}({\mathbb R} \times Y, X,\rho_{1}, \rho_{2}).$
\end{itemize}
Denote by $WPAA({\mathbb R}, X, \rho_{1}, \rho_{2})$  (respectively, $WPAA({\mathbb R} \times Y,X, \rho_{1}, \rho_{2})$)
the vector spaces of such functions. If $\rho=\rho_{1}=\rho_{2},$ then we write $WPAA({\mathbb R}, X, \rho_{1}, \rho_{2})\equiv WPAA({\mathbb R}, X,\rho)$ and $WPAA({\mathbb R} \times Y,X, \rho_{1}, \rho_{2})\equiv WPAA({\mathbb R} \times Y, X,\rho).$ 
\end{defn}

It is well-known that, for every $\rho_{1},\ \rho_{2} \in {\mathbb U}_{T},$ we have that $WPAP({\mathbb R}, X, \rho_{1}, \rho_{2})$ and $WPAA({\mathbb R}, X, \rho_{1}, \rho_{2})$ are Banach spaces with the sup-norm.

For the Stepanov class, we will use the following definition from \cite{abbas-indian} (see also Z. Xia, M. Fan \cite{stepanoff-xia} for the case
that $\rho_{1}=\rho_{2}$):

\begin{defn}\label{novi-weighted-auto-stepa} 
Let $1\leq p<\infty.$
\begin{itemize}
\item[(i)] A Stepanov $p$-bounded function $f(\cdot)$ is said to be weighted Stepanov $p$-pseudo almost periodic (automorphic) iff it admits a decomposition
$
f(t)=g(t)+q(t),
$ $t\in {\mathbb R},$ where $g(\cdot)$ is $S^{p}$-almost periodic (automorphic) and $q(\cdot) \in L_{loc}^{p}({\mathbb R} : X)$ satisfies
\begin{align}\label{stepa-auto-durak-simici}
\lim_{T\rightarrow +\infty}\frac{1}{2\int^{T}_{-T}\rho_{1}(t)\, dt}\int^{T}_{-T}\Biggl[ \int^{t+1}_{t}\|q(s)\|^{p}\, ds \Biggr]^{1/p} \rho_{2}(t)\, dt =0.
\end{align}
Denote by  $S^{p}WPAP({\mathbb R},X, \rho_{1}, \rho_{2})$ ($S^{p}WPAA({\mathbb R},X, \rho_{1}, \rho_{2})$) the vector space of such functions,
and by $S^{p}WPAA_{0}({\mathbb R},X, \rho_{1}, \rho_{2})$ the vector space of locally $p$-integrable $X$-valued functions $q(\cdot)$ such that \eqref{stepa-auto-durak-simici} holds.
\item[(ii)]  A function $f : {\mathbb R}\times Y\rightarrow X$ is said to be weighted $S^{p}$-pseudo almost periodic (automorphic) iff for each $y\in Y$ we have that $f(\cdot,y)\in L^{p}_{loc}({\mathbb R} :  X)$
and $f(\cdot,\cdot)$ admits a decomposition
$
f(t,y)=g(t,y)+q(t,y),
$ $t\in {\mathbb R},$ where $g(\cdot,\cdot)$ is $S^{p}$-almost periodic (automorphic) and 
\begin{align}\label{stepa-auto-durak-two}
\lim_{T\rightarrow +\infty}\frac{1}{2\int^{T}_{-T}\rho_{1}(t)\, dt}\int^{T}_{-T}\Biggl[ \int^{t+1}_{t}\|q(s,y)\|^{p}\, ds \Biggr]^{1/p} \rho_{2}(t)\, dt =0,
\end{align}
uniformly on bounded subsets of $Y.$
\end{itemize}
Denote by $S^{p}WPAP({\mathbb R} \times Y,X, \rho_{1}, \rho_{2})$ ($S^{p}WPAA({\mathbb R} \times Y,X, \rho_{1}, \rho_{2})$)
the vector space of such functions, and by $S^{p}WPAA_{0}({\mathbb R} \times Y,X, \rho_{1}, \rho_{2})$ the vector space of functions satisfying that for each $y\in Y$ one has that $q(\cdot)$ is a locally $p$-integrable $X$-valued function  and \eqref{stepa-auto-durak-two} holds. If $\rho=\rho_{1}=\rho_{2},$ then we write $S^{p}WPAP({\mathbb R} \times Y,X, \rho_{1}, \rho_{2})\equiv S^{p}WPAP({\mathbb R} \times Y, X,\rho)$ ($S^{p}WPAA({\mathbb R} \times Y,X, \rho_{1}, \rho_{2})\equiv S^{p}WPAA({\mathbb R} \times Y, X,\rho)$). 
\end{defn}

For instance, the function $f (t) := \text{sign}(\cos 2\pi \theta t)    +e^{-|t|},$ $t\in {\mathbb R} ,$ where $\theta$ is an irrational number,
is weighted $S^{p}$-pseudo almost
automorphic with the weight functions $\rho_{1}(t) := 1 + t^{2},$ $\rho_{2}(t) := 1$ ($t\in {\mathbb R} $); this function is also $S^{p}$-pseudo almost automorphic (cf. \cite[Example 1]{abbas-indian}).

Denote by ${\mathbb V}_{\infty}$ the collection of all weighted $\rho_{1},\ \rho_{2}\in {\mathbb U}_{\infty}$ such that $$
\limsup_{T\rightarrow +\infty}\frac{\rho_{2}(t+\tau)}{\rho_{2}(t)} <\infty\mbox{ for any }\tau \in {\mathbb R}, \mbox{ and }\limsup_{T\rightarrow +\infty}\frac{\int^{T}_{-T}\rho_{1}(t)\, dt}{\int^{T}_{-T}\rho_{2}(t)\, dt}<\infty.
$$ 
Then, owing to \cite[Theorem 2.1]{abbas-indian}, we have that $S^{p}WPAA({\mathbb R} \times Y,X, \rho_{1}, \rho_{2})$ is a Banach space endowed with the Stepanov norm $\|\cdot \|_{S^{p}},$ for any $p\in [1,\infty).$ A similar statement holds for the space  $S^{p}WPAP({\mathbb R} \times Y,X, \rho_{1}, \rho_{2}).$

We introduce the notions of a weighted Weyl $p$-pseudo almost periodic (automorphic) function and a weighted Besicovitch $p$-pseudo almost periodic (automorphic) function as follows:

\begin{defn}\label{novi-weighted-auto-stepa-weylbesik} 
Let $1\leq p<\infty.$
\begin{itemize}
\item[(i)] A function $f\in L_{loc}^{p}({\mathbb R} : X)$ is said to be weighted Weyl $p$-pseudo almost periodic (automorphic) iff it admits a decomposition
$
f(t)=g(t)+q(t),
$ $t\in {\mathbb R},$ where $g(\cdot)$ is $W^{p}$-almost periodic (automorphic) and $q(\cdot) \in L_{loc}^{p}({\mathbb R} : X)$ satisfies
\begin{align}\label{stepa-auto-durak-weyl}
\lim_{T\rightarrow +\infty}\frac{1}{2\int^{T}_{-T}\rho_{1}(t)\, dt}\int^{T}_{-T}\Biggl[ \lim_{l\rightarrow +\infty}\frac{1}{2l}\int^{t+l}_{t-l}\|q(s)\|^{p}\, ds \Biggr]^{1/p} \rho_{2}(t)\, dt =0.
\end{align}
Denote by $W^{p}WPAA({\mathbb R},X, \rho_{1}, \rho_{2})$ ($W^{p}WPAA({\mathbb R},X, \rho_{1}, \rho_{2})$) the collection of such functions,
and by $W^{p}WPAA_{0}({\mathbb R},X, \rho_{1}, \rho_{2})$ the collection of locally $p$-integrable $X$-valued functions $q(\cdot)$ such that \eqref{stepa-auto-durak-weyl} holds.
\item[(ii)] A function $f\in L_{loc}^{p}({\mathbb R} : X)$ is said to be weighted Besicovitch $p$-pseudo almost periodic (automorphic) iff it admits a decomposition
$
f(t)=g(t)+q(t),
$ $t\in {\mathbb R},$ where $g(\cdot)$ is $B^{p}$-almost periodic (automorphic) and $q(\cdot) \in L_{loc}^{p}({\mathbb R} : X)$ satisfies
\begin{align}\label{stepa-auto-durak-besik}
\lim_{T\rightarrow +\infty}\frac{1}{2\int^{T}_{-T}\rho_{1}(t)\, dt}\int^{T}_{-T}\Biggl[ \limsup_{l\rightarrow +\infty}\frac{1}{2l}\int^{t+l}_{t-l}\|q(s)\|^{p}\, ds \Biggr]^{1/p} \rho_{2}(t)\, dt =0.
\end{align}
Denote by $B^{p}WPAP({\mathbb R},X, \rho_{1}, \rho_{2})$ ($B^{p}WPAA({\mathbb R},X, \rho_{1}, \rho_{2})$) the collection of such functions,
and by $B^{p}WPAA_{0}({\mathbb R},X, \rho_{1}, \rho_{2})$ the collection of locally $p$-integrable $X$-valued functions $q(\cdot)$ such that \eqref{stepa-auto-durak-besik} holds.
\end{itemize}
\end{defn}

We will not introduce here the notions of two-parameter weighted Weyl $p$-pseudo almost periodic (automorphic) functions and two-parameter weighted Besicovitch $p$-pseudo almost periodic (automorphic) functions since these notions can be very difficultly applied in the analysis of abstract semilinear Cauchy inclusions. It is easily seen that $B^{p}WPAA({\mathbb R},X, \rho_{1}, \rho_{2})$ is a vector space, as well as that $W^{p}WPAA({\mathbb R},X, \rho_{1}, \rho_{2})\subseteq B^{p}WPAA({\mathbb R},X, \rho_{1}, \rho_{2}).$ Similar statements hold in the case of consideration of weighted pseudo-almost periodicity.

Concerning the convolution invariance of the space $B^{p}WPAA_{0}({\mathbb R},X, \rho_{1}, \rho_{2}),$ we have the following result with $p=1$:

\begin{prop}\label{setnja}
Let  $q(\cdot) \in L_{loc}^{1}({\mathbb R} : X)$ be $S^{1}$-bounded,  let $g\in L^{1}({\mathbb R}),$ and let $q\in B^{1}WPAA_{0}({\mathbb R},X, \rho_{1}, \rho_{2}).$
Then $g\ast q\in  B^{1}WPAA_{0}({\mathbb R},X, \rho_{1}, \rho_{2}).$
\end{prop}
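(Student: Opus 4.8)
The plan is to reduce everything to the scalar estimate $(1/2l)\int_{t-l}^{t+l}\|(g\ast q)(s)\|\,ds$ and to bound this by a convolution of $g$ with the corresponding quantity for $q$. First I would fix $t\in\mathbb R$ and, using Minkowski's inequality for integrals (Jensen / Fubini, since $g\in L^{1}(\mathbb R)$ and $q$ is $S^{1}$-bounded so $g\ast q$ is well defined and bounded), write
\begin{align*}
\frac{1}{2l}\int_{t-l}^{t+l}\bigl\|(g\ast q)(s)\bigr\|\,ds
&=\frac{1}{2l}\int_{t-l}^{t+l}\Bigl\|\int_{\mathbb R}g(\sigma)q(s-\sigma)\,d\sigma\Bigr\|\,ds\\
&\le\int_{\mathbb R}|g(\sigma)|\,\Biggl[\frac{1}{2l}\int_{t-l}^{t+l}\bigl\|q(s-\sigma)\bigr\|\,ds\Biggr]\,d\sigma
=\int_{\mathbb R}|g(\sigma)|\,\Phi_{l}(t-\sigma)\,d\sigma,
\end{align*}
where $\Phi_{l}(u):=(1/2l)\int_{u-l}^{u+l}\|q(s)\|\,ds$. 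Taking $\limsup_{l\to\infty}$ and invoking Fatou's lemma (the integrands are nonnegative and dominated, for $l\ge 1$, by $\|g\|_{L^{1}}\|q\|_{S^{1}}$ which is integrable in $\sigma$ against $|g|$ after a further bound — or one simply uses the elementary inequality $\limsup\int\le\int\limsup$ valid here because $\Phi_l$ is uniformly bounded), I get
\begin{align*}
\Psi(t):=\limsup_{l\to\infty}\frac{1}{2l}\int_{t-l}^{t+l}\bigl\|(g\ast q)(s)\bigr\|\,ds\le\int_{\mathbb R}|g(\sigma)|\,\Psi_{q}(t-\sigma)\,d\sigma=\bigl(|g|\ast\Psi_{q}\bigr)(t),
\end{align*}
where $\Psi_{q}(u):=\limsup_{l\to\infty}(1/2l)\int_{u-l}^{u+l}\|q(s)\|\,ds$.

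Next I would feed this into the weighted ergodic average. We must show
\begin{align*}
\lim_{T\to+\infty}\frac{1}{2\int_{-T}^{T}\rho_{1}(t)\,dt}\int_{-T}^{T}\Psi(t)\,\rho_{2}(t)\,dt=0,
\end{align*}
and by the pointwise bound just obtained it suffices to prove the same with $\Psi$ replaced by $(|g|\ast\Psi_{q})$. Here I would interchange the order of integration (Tonelli): the inner integral becomes $\int_{\mathbb R}|g(\sigma)|\bigl[\int_{-T}^{T}\Psi_{q}(t-\sigma)\rho_{2}(t)\,dt\bigr]d\sigma$, and after the substitution $t\mapsto t+\sigma$ the bracket is $\int_{-T-\sigma}^{T-\sigma}\Psi_{q}(t)\rho_{2}(t+\sigma)\,dt$. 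Using $\rho_{1},\rho_{2}\in\mathbb V_{\infty}$ (or at least the translation-control $\limsup_{t}\rho_{2}(t+\sigma)/\rho_{2}(t)<\infty$ and boundedness of $\int_{-T}^{T}\rho_{1}/\int_{-T}^{T}\rho_{2}$ implicit in the hypothesis that $q\in B^{1}WPAA_{0}$ makes sense) one dominates $\rho_{2}(t+\sigma)$ by $C(\sigma)\rho_{2}(t)$ off a fixed compact set, and the boundedly-many extra terms from the compact set, together with the shift of the limits of integration from $\pm T$ to $\pm(T\mp\sigma)$, are absorbed in the limit $T\to\infty$. This yields, for each fixed $\sigma$, that $\frac{1}{2\int_{-T}^{T}\rho_{1}}\int_{-T}^{T}\Psi_{q}(t-\sigma)\rho_{2}(t)\,dt\to 0$; then a dominated-convergence argument in $\sigma$ (the bracket is uniformly bounded in $T$ by $\|q\|_{S^{1}}\cdot\text{const}$ for $T$ large, and $g\in L^{1}$) gives the conclusion.

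The main obstacle is the uniformity needed to pass the limit $T\to\infty$ inside the $\sigma$-integral: one must show the family of normalized weighted averages of the translates $\Psi_{q}(\cdot-\sigma)$ is bounded uniformly in $\sigma$ over the relevant range and tends to $0$ for each $\sigma$, and this is exactly where the structural assumptions on $\rho_{1},\rho_{2}$ (their membership in $\mathbb V_{\infty}$, i.e. control of $\rho_{2}(t+\sigma)/\rho_{2}(t)$ and of the ratio $\int\rho_{1}/\int\rho_{2}$) must be used carefully — naively one only has the translation bound for each fixed $\sigma$, not uniformly, so one splits $\sigma$ over $|\sigma|\le M$ and $|\sigma|>M$ and uses $\int_{|\sigma|>M}|g|\to 0$. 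The almost-periodic/automorphic summand $g$ in the decomposition $q=(\text{$B^1$-a.p.})+(\text{ergodic part})$ plays no role here since the proposition concerns only $B^{1}WPAA_{0}$, so no composition principle is needed; everything is soft analysis once the convolution bound and the weight estimates are in place.
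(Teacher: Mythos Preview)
Your first two steps --- the pointwise convolution estimate and the passage of $\limsup_{l\to\infty}$ inside the $\sigma$-integral via reverse Fatou (using $S^{1}$-boundedness of $q$ to get the dominating function) --- are exactly what the paper does. The gap is in the third step. You treat $\Psi_{q}(t-\sigma)$ as a genuine translate and then try to undo the shift by invoking translation control on $\rho_{2}$ (membership in ${\mathbb V}_{\infty}$, $\limsup_{t}\rho_{2}(t+\sigma)/\rho_{2}(t)<\infty$, comparability of $\int\rho_{1}$ and $\int\rho_{2}$). None of those hypotheses are assumed in the proposition: only $\rho_{1},\rho_{2}\in{\mathbb U}_{\infty}$ is implicit in the definition of $B^{1}WPAA_{0}$, and there is no reason the weights should satisfy any translation bound. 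So as written your argument proves a strictly weaker statement.

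The missing observation, and the point of the paper's proof, is that $\Psi_{q}$ is \emph{constant}: for any $r\in{\mathbb R}$,
\[
\limsup_{l\to\infty}\frac{1}{2l}\int_{t-l}^{t+l}\|q(s)\|\,ds
=\limsup_{l\to\infty}\frac{1}{2l}\int_{t-r-l}^{t-r+l}\|q(s)\|\,ds,
\]
since the two averages differ by at most $\tfrac{1}{2l}$ times the integral of $\|q\|$ over two intervals of length $|r|$, which is $O(1/l)$ by $S^{1}$-boundedness. Hence $\Psi_{q}(t-\sigma)=\Psi_{q}(t)$, and your bound $(|g|\ast\Psi_{q})(t)$ collapses to $\|g\|_{L^{1}}\,\Psi_{q}(t)$. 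The weighted ergodic average of $\Psi$ is then at most $\|g\|_{L^{1}}$ times the weighted ergodic average of $\Psi_{q}$, which tends to $0$ by hypothesis --- no assumption on $\rho_{1},\rho_{2}$ beyond ${\mathbb U}_{\infty}$ is needed, and the whole last paragraph of your proposal (the split $|\sigma|\le M$ versus $|\sigma|>M$, dominated convergence in $\sigma$, etc.) becomes unnecessary.
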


\begin{proof}
First of all, let us recall the well-known fact that
\begin{align}\label{sm-aumorfic}
\limsup_{l\rightarrow +\infty}\frac{1}{2l}\int^{t+l}_{t-l}\|q(s)\|\, ds =\limsup_{l\rightarrow +\infty}\frac{1}{2l}\int^{t+l-r}_{t-l-r}\|q(s)\|\, ds,\quad r\in {\mathbb R}. 
\end{align}
Denote 
$$
f_{l,t}(r):=\frac{|g(r)|}{2l}\int^{t+l-r}_{t-l-r}\|q(s)\|\, ds,\quad l\geq 1,\ \ t,\ r\in {\mathbb R}. 
$$
By the $S^{1}$-boundedness of $q(\cdot),$ we get the existence of a function $G_{l,t}(\cdot)\in L^{1}({\mathbb R})$ such that $f_{l,t}(r)\leq G_{l,t}(r)$ for a.e. $r\in {\mathbb R}.$ Hence, we can apply the reverse Fatou's lemma in order to see that
\begin{align}
\notag \limsup_{l\rightarrow +\infty}\int^{\infty}_{-\infty}& \Biggl[\frac{1}{2l}\int^{t+l-r}_{t-l-r}\|q(s)\|\, ds \Biggr] |g(r)|\, dr 
\\ & \label{smor-aumorfic} \leq \int^{\infty}_{-\infty}\limsup_{l\rightarrow +\infty}\Biggl[\frac{1}{2l}\int^{t+l-r}_{t-l-r}\|q(s)\|\, ds\Biggr] |g(r)|\, dr.
\end{align}   
Then the final conclusion follows by using \eqref{sm-aumorfic}-\eqref{smor-aumorfic}, the validity of \eqref{stepa-auto-durak-besik} for $q(\cdot),$ and the following integral computation with Fubini theorem:
\begin{align*}
& \frac{1}{2\int^{T}_{-T}\rho_{1}(t)\, dt}\int^{T}_{-T}\Biggl[ \limsup_{l\rightarrow +\infty}\frac{1}{2l}\int^{t+l}_{t-l}\Biggl\|\int^{\infty}_{-\infty}g(s-r)q(r)\, dr\Biggr\|  ds \Biggr] \rho_{2}(t)\, dt 
\\ & \leq \frac{1}{2\int^{T}_{-T}\rho_{1}(t)\, dt}\int^{T}_{-T}\Biggl[ \limsup_{l\rightarrow +\infty}\frac{1}{2l}\int^{t+l}_{t-l}\int^{\infty}_{-\infty}|g(r)| \|q(s-r)\| \, dr\,  ds \Biggr] \rho_{2}(t)\, dt 
\\ & =\frac{1}{2\int^{T}_{-T}\rho_{1}(t)\, dt}\int^{T}_{-T}\Biggl[ \limsup_{l\rightarrow +\infty}\int^{\infty}_{-\infty}\frac{1}{2l}\int^{t+l}_{t-l}|g(r)| \|q(s-r)\| \, ds\,  dr \Biggr] \rho_{2}(t)\, dt 
\\ & =\frac{1}{2\int^{T}_{-T}\rho_{1}(t)\, dt}\int^{T}_{-T}\Biggl[ \limsup_{l\rightarrow +\infty}\int^{\infty}_{-\infty}\frac{1}{2l}\int^{t+l-r}_{t-l-r} \|q(s)\| \, ds\,  |g(r)|\, dr \Biggr] \rho_{2}(t)\, dt
\\ & \leq \frac{1}{2\int^{T}_{-T}\rho_{1}(t)\, dt}\int^{T}_{-T}\int^{\infty}_{-\infty}\limsup_{l\rightarrow +\infty}\Biggl[\frac{1}{2l}\int^{t+l-r}_{t-l-r}\|q(s)\|\, ds\Biggr] |g(r)|\, dr\, dt
\\ & =\frac{1}{2\int^{T}_{-T}\rho_{1}(t)\, dt}\int^{T}_{-T}\int^{\infty}_{-\infty}\limsup_{l\rightarrow +\infty}\Biggl[\frac{1}{2l}\int^{t+l}_{t-l}\|q(s)\|\, ds\Biggr] |g(r)|\, dr\, dt
\\ & =\frac{\int^{\infty}_{-\infty}|g(r)|\, dr}{2\int^{T}_{-T}\rho_{1}(t)\, dt}\int^{T}_{-T} \Biggl[ \limsup_{l\rightarrow +\infty}\frac{1}{2l}\int^{t+l}_{t-l}\|q(s)\|\, ds \Biggr]  \rho_{2}(t)\, dt .
\end{align*}
\end{proof}

The theory of weighted pseudo-almost periodic functions and weighted pseudo-almost automorphic functions is full of open problems. For example, let $\rho=\rho_{1}=\rho_{2}.$ Then it is well known that decomposition of a weighted pseudo-almost periodic function into its almost-periodic and ergodic component is not unique, in general.
The second problem appearing is that the sum $AP({\mathbb R} : X)$ and $PAP_{0}({\mathbb R}, X,\rho)$ need not be a closed subspace of $C_{b}({\mathbb R}: X)$ albeit the both parts $AP({\mathbb R} : X)$ and $PAP_{0}({\mathbb R}, X,\rho)$ considered separately
are closed subspaces of $C_{b}({\mathbb R}: X).$ To overcome this problem, and to analyze weighted pseudo almost-periodic properties of certain classes of semilinear first order Cauchy problems, J. Zhang, T.-J. Xiao and J. Liang \cite{comp-weighted} have introduced the following norm on the space $PAP({\mathbb R}, X,\rho):$
$$
\|f\|_{\rho}:=\inf_{i\in I}\Biggl[  \sup_{t\in {\mathbb R}}\|g_{i}(t)\|+\sup_{t\in {\mathbb R}}\|q_{i}(t)\|\Biggr],
$$
where $I$ denotes the family of all possible decompositions of $f(\cdot)$ into almost-periodic and ergodic component. This norm turns $PAP({\mathbb R}, X,\rho)$ into a Banach space. The convolution and translation invariance of (double-)weighted pseudo almost-periodic functions and some other problems for this class have been investigated by T. Diagana \cite{diagana-double}-\cite{diagana-doublep},
D. Ji, Ch. Zhang \cite{dji-weighted} and A. Coronel, M. Pinto and D. Sep\' ulveda  \cite{coronel}. The translation invariance of $PAP({\mathbb R}, X,\rho)$ is ensured e.g. by the validity of condition 
\begin{align*}
\sup_{r>0}\sup_{t\in \Omega_{r,s}}\frac{\rho(t+s)}{\rho(t)}<\infty,\quad s\in {\mathbb R};
\end{align*}
cf. \cite[Theorem 3.7 (b)]{coronel}. 

In the following proposition, we provide a slightly different condition ensuring the translation invariance of the space $PAP_{0}({\mathbb R}, X,\rho_{1},\rho_{2}).$ 

\begin{prop}\label{tra-inv-automorphic}
Let $\rho_{1},\ \rho_{2}\in {\mathbb U}_{\infty}.$ Then the space $PAP_{0}({\mathbb R}, X,\rho_{1},\rho_{2})$ is translation invariant if 
\begin{align}\label{this-blues}
\lim_{T \rightarrow +\infty}\frac{\Bigl| \int^{-T}_{-T-s}\rho_{2}(t)\, dt \Bigr| +\Bigl| \int^{T-s}_{T}\rho_{2}(t)\, dt \Bigr|}{\int^{T}_{-T}\rho_{1}(t)\, dt}=0,\quad s\in {\mathbb R}
\end{align}
and there exists a function $g : {\mathbb R} \rightarrow (0,\infty)$ such that 
\begin{align}\label{this-blues1}
\rho_{2}(t-s) \leq g(s)\rho_{2}(t),\quad t,\ s\in {\mathbb R}.
\end{align}
\end{prop}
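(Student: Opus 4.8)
The plan is to show directly that if $f \in PAP_0(\mathbb{R}, X, \rho_1, \rho_2)$ then its translate $f_s(\cdot) := f(\cdot - s)$ again lies in $PAP_0(\mathbb{R}, X, \rho_1, \rho_2)$, for every fixed $s \in \mathbb{R}$. Since $f$ is bounded and continuous, so is $f_s$; the only thing to verify is the ergodicity condition
\begin{align*}
\lim_{T \rightarrow +\infty}\frac{1}{2\int^{T}_{-T}\rho_{1}(t)\, dt}\int^{T}_{-T}\|f(t-s)\| \rho_{2}(t)\, dt =0.
\end{align*}
First I would perform the change of variables $u = t - s$ in the integral in the numerator, which turns $\int_{-T}^{T}\|f(t-s)\|\rho_2(t)\,dt$ into $\int_{-T-s}^{T-s}\|f(u)\|\rho_2(u+s)\,du$. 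The idea is then to split this into the ``main'' interval $[-T,T]$ and the two ``boundary'' intervals, namely $\int_{-T-s}^{-T}$ and $\int_{T-s}^{T}$ (with appropriate signs/absolute values according to the sign of $s$).

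For the main interval, I would use hypothesis \eqref{this-blues1} in the form $\rho_2(u+s) \le g(-s)\rho_2(u)$ (applying it with $t \mapsto u + s$ and $s \mapsto -s$), so that
\begin{align*}
\frac{1}{2\int^{T}_{-T}\rho_{1}(t)\, dt}\int^{T}_{-T}\|f(u)\| \rho_{2}(u+s)\, du \le g(-s)\cdot \frac{1}{2\int^{T}_{-T}\rho_{1}(t)\, dt}\int^{T}_{-T}\|f(u)\| \rho_{2}(u)\, du,
\end{align*}
and the right-hand side tends to $0$ as $T \to +\infty$ because $f \in PAP_0(\mathbb{R}, X, \rho_1, \rho_2)$. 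For the two boundary pieces, I would bound $\|f(u)\|$ by $\|f\|_\infty$ (finite, since $f \in C_b$) and again use \eqref{this-blues1} to replace $\rho_2(u+s)$ by $g(-s)\rho_2(u)$, reducing the boundary contributions to a constant multiple of
\begin{align*}
\frac{\bigl| \int^{-T}_{-T-s}\rho_{2}(t)\, dt \bigr| +\bigl| \int^{T-s}_{T}\rho_{2}(t)\, dt \bigr|}{\int^{T}_{-T}\rho_{1}(t)\, dt},
\end{align*}
which goes to $0$ by \eqref{this-blues}. Adding the three estimates gives the desired limit. (One small bookkeeping point: depending on whether $s > 0$ or $s < 0$ the translated integration window $[-T-s, T-s]$ either contains or is contained in $[-T,T]$; in the first case the decomposition is an honest sum of the main term plus two boundary terms, in the second case one subtracts two boundary terms — either way the triangle inequality plus the bound $\|f\|_\infty$ on the boundary pieces handles it, and this is exactly why \eqref{this-blues} is stated with absolute values around each boundary integral.)

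I do not expect a serious obstacle here; the argument is essentially a change-of-variables estimate. The only place requiring mild care is making the change of variables rigorous when $\rho_1, \rho_2$ are merely locally integrable and the normalizing factor $\int_{-T}^{T}\rho_1(t)\,dt \to \infty$ (guaranteed by $\rho_1 \in \mathbb{U}_\infty$), and verifying that the application of \eqref{this-blues1} is in the correct direction — i.e.\ that $g(-s)$, not $g(s)$, is the relevant constant — which is a matter of tracking the sign of the shift. Once the split and these two hypotheses are in place, the conclusion is immediate.
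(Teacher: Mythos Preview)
Your proof is correct and follows essentially the same path as the paper's: change variables, separate the shifted window into the ``main'' interval $[-T,T]$ and two end-segments, control the main term via \eqref{this-blues1} and the ergodicity of $f$, and control the end-segments via $\|f\|_\infty$ together with \eqref{this-blues}; the only cosmetic difference is that the paper applies \eqref{this-blues1} once \emph{before} splitting (getting the single constant $g(s)$ in front of the whole bracket), whereas you apply it separately to each piece. One small slip in your parenthetical: the windows $[-T-s,T-s]$ and $[-T,T]$ have the same length $2T$, so neither ever contains the other --- their symmetric difference always consists of one segment added on the left and one removed on the right (or vice versa), which is precisely why \eqref{this-blues} carries absolute values, as you correctly conclude.
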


\begin{proof}
Let $s\in {\mathbb R}$ and $f\in PAP_{0}({\mathbb R}, X,\rho_{1},\rho_{2}).$ We need to prove that $f(-s+\cdot) \in PAP_{0}({\mathbb R}, X,\rho_{1},\rho_{2}).$ Towards this end, observe that \eqref{this-blues1} and a simple calculation yield that for each $T>0$ we have:
\begin{align*}
&\frac{1}{2\int^{T}_{-T} \rho_{1}(t)\, dt} \int^{T}_{-T}\|f(-s+t)\| \rho_{2}(t)\, dt 
\\ & =\frac{1}{2\int^{T}_{-T} \rho_{1}(t)\, dt}\int^{T-s}_{-T-s}\|f(t)\| \rho_{2}(t-s)\, dt 
\\ & \leq \frac{ g(s)}{2\int^{T}_{-T} \rho_{1}(t)\, dt}\int^{T-s}_{-T-s}\|f(t)\| \rho_{2}(t)\, dt 
\\ & \leq \frac{ g(s)}{2\int^{T}_{-T} \rho_{1}(t)\, dt}\Biggl[\int^{T}_{-T}\|f(t)\| \rho_{2}(t)\, dt  +\Biggl| \int^{-T}_{-T-s}\rho_{2}(t)\, dt \Biggr| +\Biggl| \int^{T-s}_{T}\rho_{2}(t)\, dt \Biggr| \Biggr].
\end{align*}
Then the final conclusion follows by applying \eqref{this-blues}.
\end{proof}

In order to analyze generalized weighted pseudo-almost automorphic solutions of semilinear (fractional) Cauchy inclusions, we need to repeat some known facts about composition principles for the classes of weighted pseudo-almost automorphic functions and Stepanov weighted pseudo-almost automorphic functions (weighted pseudo-almost periodic solutions can be analyzed in a similar fashion; see e.g. \cite[Theorem 5.9]{diagana-doublep} and \cite[Theorem 3.1, Theorem 3.5]{comp-weighted} .

\subsection{Composition principles for weighted pseudo-almost automorphic solutions}\label{PIVOTI}

Our first result has been proved by T. Diagana (cf. \cite[Theorem 5.8]{diagana-doublep}):

\begin{thm}\label{diagana-composition}
Assume that $\rho_{1},\ \rho_{2}\in {\mathbb U}_{\infty},$ $f : {\mathbb R}\times Y\rightarrow X$ is weighted pseudo-almost  automorphic, 
and $h : {\mathbb R}\rightarrow Y$ is weighted pseudo-almost automorphic. Assume that there exists a finite constant $L_{f}>0$ such that
\begin{align}\label{minorcas}
\|f(t,y)-f(t,z)\| \leq L_{f}\|y-z\|_{Y},\quad t\in {\mathbb R},\ \  y,\ z\in Y .
\end{align}
Then $f(\cdot,h(\cdot)) \in WPAA({\mathbb R} \times Y,X, \rho_{1}, \rho_{2})$.
\end{thm}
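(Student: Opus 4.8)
The plan is to decompose both $f$ and $h$ into their almost automorphic and ergodic parts and to track how the Lipschitz condition \eqref{minorcas} propagates through the superposition. Write $f = g + \varphi$ with $g \in AA({\mathbb R}\times Y : X)$ and $\varphi \in PAP_{0}({\mathbb R}\times Y, X, \rho_{1},\rho_{2})$, and $h = \alpha + \beta$ with $\alpha \in AA({\mathbb R} : Y)$ and $\beta \in PAP_{0}({\mathbb R}, Y, \rho_{1},\rho_{2})$. The natural candidate decomposition is
\begin{align*}
f(t,h(t)) = g(t,\alpha(t)) + \bigl[ f(t,h(t)) - f(t,\alpha(t)) \bigr] + \bigl[ \varphi(t,\alpha(t)) \bigr].
\end{align*}
First I would check that the first term $g(\cdot,\alpha(\cdot))$ is almost automorphic: this is the classical composition theorem for two-parameter almost automorphic functions (recalled in the excerpt via $AA({\mathbb R}\times Y : X)$ and the density of the relevant ranges), using that the range of $\alpha(\cdot)$ is relatively compact and that $g$ is Lipschitz in its second variable uniformly in $t$ (which is inherited from \eqref{minorcas} since $g = f - \varphi$ and $\varphi$ is bounded — more carefully, one shows the Lipschitz constant for $g$ equals $L_f$ on the closure of the range of $h$, or one invokes the standard statement directly). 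So $g(\cdot,\alpha(\cdot)) \in AA({\mathbb R} : X)$.

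Next I would show the remaining two terms lie in $PAP_{0}({\mathbb R}, X, \rho_{1},\rho_{2})$. For the middle term, \eqref{minorcas} gives the pointwise bound $\| f(t,h(t)) - f(t,\alpha(t)) \| \le L_f \|\beta(t)\|_Y$, and since $\beta \in PAP_{0}({\mathbb R}, Y, \rho_{1},\rho_{2})$, the weighted ergodic average of $L_f\|\beta(\cdot)\|_Y$ tends to zero; boundedness and continuity are clear, so this term is in $PAP_{0}({\mathbb R}, X, \rho_{1},\rho_{2})$. For the last term $\varphi(\cdot,\alpha(\cdot))$, one uses that $\{\alpha(t) : t\in{\mathbb R}\}$ is relatively compact, hence covered by finitely many balls of radius $\eta$ centered at points $y_1,\dots,y_m \in Y$; combining the uniform-in-$y$ decay of the weighted average of $\|\varphi(t,y)\|$ over bounded sets with a uniform continuity / equicontinuity estimate for $\varphi$ in the second variable on the compact range (here one again exploits that $\varphi = f - g$ so $\varphi$ is Lipschitz in $y$ with constant $2L_f$, or simply that $\varphi(t,\cdot)$ is equicontinuous on compacta as a difference of a Lipschitz function and an almost automorphic one), one bounds $\|\varphi(t,\alpha(t))\|$ by $\max_{1\le j\le m}\|\varphi(t,y_j)\| + (\text{small})$ and concludes the weighted average vanishes. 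Summing, $f(\cdot,h(\cdot)) \in WPAA({\mathbb R}, X, \rho_{1},\rho_{2})$; the two-parameter version $WPAA({\mathbb R}\times Y, X, \rho_{1},\rho_{2})$ in the statement follows by carrying the same argument with the extra parameter frozen, uniformly on bounded subsets of $Y$, since all the estimates above are uniform in that frozen parameter.

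The main obstacle I expect is the justification that the ergodic component $\varphi$ of $f$ is itself Lipschitz (or at least equicontinuous in the second variable on compact sets) with a uniform constant — the decomposition $f = g+\varphi$ only gives this for $f$, and transferring it to $\varphi$ requires knowing the same for $g$. The standard fix is that the two-parameter almost automorphic composition theorem, when $f$ is globally Lipschitz in $y$ uniformly in $t$, automatically delivers an almost automorphic component $g$ that is also Lipschitz in $y$ with the same constant (this is part of how that theorem is usually proved: $g(t,y)$ arises as a pointwise limit $\lim_k f(t+s_k, y)$ along a common subsequence, and the Lipschitz bound passes to the limit). Granting that, $\varphi = f - g$ is Lipschitz in $y$ with constant $2L_f$, and the covering argument for the ergodic term goes through cleanly. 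A secondary technical point is ensuring all ``uniformly on bounded subsets of $Y$'' qualifiers survive in the two-parameter case, but since every estimate is an inequality with constants depending only on $L_f$, the weights, and the diameter of the bounded set, this is routine.
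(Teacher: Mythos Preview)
The paper does not supply its own proof of this theorem; it merely cites it as \cite[Theorem 5.8]{diagana-doublep}. So there is nothing in the paper to compare your argument against line by line. Your outline is the standard three-term decomposition used for such composition results, and the treatment of the middle term $f(t,h(t))-f(t,\alpha(t))$ via \eqref{minorcas} and of the last term $\varphi(t,\alpha(t))$ via a finite covering of the relatively compact range of $\alpha$ is exactly the classical route.

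There is, however, a genuine gap in your handling of the ``main obstacle.'' You try to show that the almost automorphic component $g$ inherits the Lipschitz constant $L_f$ by writing ``$g(t,y)$ arises as a pointwise limit $\lim_k f(t+s_k,y)$ along a common subsequence.'' That is not how $g$ is obtained here: the decomposition $f=g+\varphi$ is \emph{given} by the hypothesis $f\in WPAA({\mathbb R}\times Y,X,\rho_1,\rho_2)$, not constructed from translates of $f$. In general, for weights $\rho_1,\rho_2\in{\mathbb U}_\infty$ without further restrictions, the decomposition is not unique and there is no reason a prescribed $g$ must be Lipschitz in $y$. This is precisely why the paper's Stepanov analogue (the next theorem) imposes \emph{separate} Lipschitz conditions \eqref{minorcas} on $f$ and \eqref{minorcas-ibiza} on $g$. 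Diagana's original proof either adds hypotheses on the weights guaranteeing uniqueness of decomposition, or uses an argument for the $AA$ composition step that does not rely on a Lipschitz bound for $g$ (e.g.\ uniform continuity of $g$ on ${\mathbb R}\times K$ for compact $K$, which is built into stronger definitions of two-parameter almost automorphy). Your write-up should make explicit which of these routes you are taking, because the ``limit of translates'' justification as written does not apply.

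A minor point: $f(\cdot,h(\cdot))$ is a function of $t$ alone, so the natural conclusion is membership in $WPAA({\mathbb R},X,\rho_1,\rho_2)$. The two-parameter space in the displayed conclusion is almost certainly a typo in the paper; your attempt to reinterpret it by ``freezing an extra parameter'' is unnecessary and does not correspond to anything in the statement.
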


Arguing as in the proof of \cite[Theorem 3.6, Theorem 3.7]{stepanoff-xia}, we can prove the following composition principles, stated here with two generally different pivot spaces (see also \cite[Theorem 2.2, Theorem 2.3, Theorem 2.4]{abbas-indian}).

\begin{thm}\label{xia-composition}
Assume that $\rho_{1},\ \rho_{2}\in {\mathbb U}_{\infty},$ $1\leq p<\infty,$ $f : {\mathbb R}\times Y\rightarrow X$ is weighted $S^{p}$-pseudo almost automorphic, 
$
f(t,y)=g(t,y)+q(t,y),
$ $t\in {\mathbb R},$ where $g(\cdot,\cdot)$ is $S^{p}$-almost automorphic and $q(\cdot,\cdot)$ satisfies \eqref{stepa-auto-durak-two},
uniformly on bounded subsets of $Y.$ Let $h : {\mathbb R}\rightarrow Y$ be weighted $S^{p}$-pseudo almost automorphic, $
f(t)=g(t)+q(t),
$ $t\in {\mathbb R},$ where $g(\cdot)$ is $S^{p}$-almost automorphic with relatively compact range in $Y$, and $q(\cdot) \in L_{loc}^{p}({\mathbb R} : X)$ satisfies
\eqref{stepa-auto-durak-simici}. Assume that there exist two finite constants $L_{f}>0$ and $L_{g}>0$ such that
\eqref{minorcas}
and
\begin{align}\label{minorcas-ibiza}
\|g(t,y)-g(t,z)\| \leq L_{g}\|y-z\|_{Y},\quad t\in {\mathbb R} ,\ \  y,\ z\in Y
\end{align}
hold.
Then $f(\cdot,h(\cdot)) \in S^{p}WPAA({\mathbb R} \times Y,X, \rho_{1}, \rho_{2}).$
\end{thm}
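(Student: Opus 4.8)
The plan is to mimic, in the double-weighted Stepanov setting, the classical three-term splitting used in composition results of Xia--Fan type. Write $f(t,y)=g(t,y)+q(t,y)$ and $h(t)=g_{h}(t)+q_{h}(t)$ for the decompositions supplied by the hypotheses, so that $g(\cdot,\cdot)\in AAS^{p}({\mathbb R}\times Y:X)$ satisfies \eqref{minorcas-ibiza}, $g_{h}(\cdot)$ is $S^{p}$-almost automorphic with relatively compact range in $Y$, $q(\cdot,\cdot)$ satisfies \eqref{stepa-auto-durak-two} uniformly on bounded subsets of $Y$, and $q_{h}(\cdot)$ satisfies \eqref{stepa-auto-durak-simici}. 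A preliminary routine step is to note that $f(\cdot,h(\cdot))\in L^{p}_{loc}({\mathbb R}:X)$ and is $S^{p}$-bounded: fixing any $y_{0}\in Y$ and using \eqref{minorcas}, one has $\|f(t,h(t))\|\le\|f(t,y_{0})\|+L_{f}\|h(t)-y_{0}\|_{Y}$, which is controlled by the local $p$-integrability of $f(\cdot,y_{0})$ and the $S^{p}$-boundedness of $h(\cdot)$. One then decomposes
\begin{align*}
f(t,h(t))=g(t,g_{h}(t))+\bigl[f(t,h(t))-f(t,g_{h}(t))\bigr]+q(t,g_{h}(t)),\quad t\in{\mathbb R},
\end{align*}
and shows that the first summand is $S^{p}$-almost automorphic while the other two lie in $S^{p}WPAA_{0}({\mathbb R},X,\rho_{1},\rho_{2})$; by Definition \ref{novi-weighted-auto-stepa}(i) this gives that $f(\cdot,h(\cdot))$ belongs to $S^{p}WPAA({\mathbb R},X,\rho_{1},\rho_{2})$.

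For the first summand I would invoke the two-parameter composition principle for $S^{p}$-almost automorphic functions (the Stepanov analogue of the classical composition theorem; see \cite{element} and \cite{stepanoff-xia}): since $g(\cdot,\cdot)\in AAS^{p}({\mathbb R}\times Y:X)$ is Lipschitz in its second argument by \eqref{minorcas-ibiza} and $g_{h}(\cdot)$ is $S^{p}$-almost automorphic with relatively compact range, one gets $g(\cdot,g_{h}(\cdot))\in AAS^{p}({\mathbb R}:X)$. For the second summand $\Phi_{1}(t):=f(t,h(t))-f(t,g_{h}(t))$, estimate \eqref{minorcas} yields $\|\Phi_{1}(t)\|\le L_{f}\|q_{h}(t)\|_{Y}$ for a.e.\ $t$, hence $\bigl(\int_{t}^{t+1}\|\Phi_{1}(s)\|^{p}\,ds\bigr)^{1/p}\le L_{f}\bigl(\int_{t}^{t+1}\|q_{h}(s)\|_{Y}^{p}\,ds\bigr)^{1/p}$; multiplying by $\rho_{2}(t)$, integrating over $[-T,T]$ and dividing by $2\int_{-T}^{T}\rho_{1}$, the validity of \eqref{stepa-auto-durak-simici} for $q_{h}(\cdot)$ forces $\Phi_{1}\in S^{p}WPAA_{0}({\mathbb R},X,\rho_{1},\rho_{2})$.

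The genuine work --- and the step I expect to be the main obstacle --- is the third summand $\Phi_{2}(t):=q(t,g_{h}(t))$, because $g_{h}(\cdot)$ varies with $t$ whereas \eqref{stepa-auto-durak-two} only controls $q(\cdot,y)$ for each fixed $y$. I would first note that $q=f-g$ inherits Lipschitz dependence on its second variable, $\|q(t,y)-q(t,z)\|\le(L_{f}+L_{g})\|y-z\|_{Y}$ for a.e.\ $t$, by combining \eqref{minorcas} and \eqref{minorcas-ibiza}. Given $\varepsilon>0$, cover the relatively compact set $\overline{\{g_{h}(t):t\in{\mathbb R}\}}$ by finitely many balls of radius $\varepsilon$ with centres $y_{1},\dots,y_{m}$; then $\|q(s,g_{h}(s))\|\le\sum_{i=1}^{m}\|q(s,y_{i})\|+(L_{f}+L_{g})\varepsilon$ for every $s$, and Minkowski's inequality on $[t,t+1]$ gives
\begin{align*}
\Bigl(\int_{t}^{t+1}\|q(s,g_{h}(s))\|^{p}\,ds\Bigr)^{1/p}\le\sum_{i=1}^{m}\Bigl(\int_{t}^{t+1}\|q(s,y_{i})\|^{p}\,ds\Bigr)^{1/p}+(L_{f}+L_{g})\varepsilon.
\end{align*}
Substituting into the weighted mean over $[-T,T]$, each of the $m$ summands tends to $0$ as $T\to+\infty$ by \eqref{stepa-auto-durak-two} applied to the fixed bounded set $\{y_{1},\dots,y_{m}\}$, and there remains only the residual term $(L_{f}+L_{g})\varepsilon\cdot\bigl(\int_{-T}^{T}\rho_{2}(t)\,dt\bigr)\big/\bigl(2\int_{-T}^{T}\rho_{1}(t)\,dt\bigr)$. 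Controlling this residual is the delicate point: one needs $\limsup_{T\to+\infty}\bigl(\int_{-T}^{T}\rho_{2}\bigr)\big/\bigl(\int_{-T}^{T}\rho_{1}\bigr)<\infty$ (which holds whenever $\rho_{2}\le C\rho_{1}$, in particular for the weights used in the examples), after which letting $\varepsilon\downarrow0$ yields $\Phi_{2}\in S^{p}WPAA_{0}({\mathbb R},X,\rho_{1},\rho_{2})$. Summing the three pieces completes the argument.
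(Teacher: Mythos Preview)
Your proposal is correct and reproduces exactly the Xia--Fan three-term decomposition that the paper itself cites as its proof (the paper gives no independent argument, simply deferring to \cite[Theorem 3.6, Theorem 3.7]{stepanoff-xia} and \cite{abbas-indian}). Your observation on the residual term $(L_{f}+L_{g})\varepsilon\cdot\int_{-T}^{T}\rho_{2}\big/\bigl(2\int_{-T}^{T}\rho_{1}\bigr)$ is well taken: in the one-weight case $\rho_{1}=\rho_{2}$ of \cite{stepanoff-xia} this ratio is identically $1$, but in the double-weighted generality stated here one indeed needs $\limsup_{T\to+\infty}\int_{-T}^{T}\rho_{2}\big/\int_{-T}^{T}\rho_{1}<\infty$ for the $\varepsilon$-argument to close --- a hypothesis the paper passes over.
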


\begin{thm}\label{bibl-auto}
Assume that $\rho_{1},\ \rho_{2}\in {\mathbb U}_{\infty},$ $1< p<\infty,$ $f : {\mathbb R}\times Y\rightarrow X$ is weighted $S^{p}$-pseudo almost automorphic, 
$
f(t,y)=g(t,y)+q(t,y),
$ $t\in {\mathbb R},$ where $g(\cdot,\cdot)$ is $S^{p}$-almost automorphic and $q(\cdot,\cdot)$ satisfies \eqref{stepa-auto-durak-two},
uniformly on bounded subsets of $Y.$ Let $h : {\mathbb R}\rightarrow Y$ be weighted $S^{p}$-pseudo almost automorphic, $
f(t)=g(t)+q(t),
$ $t\in {\mathbb R},$ where $g(\cdot)$ is $S^{p}$-almost automorphic with relatively compact range in $Y$, and $q(\cdot) \in L_{loc}^{p}({\mathbb R} : X)$ satisfies
\eqref{stepa-auto-durak-simici}. Assume that  $ r\geq \max (p, p/p -1)$ and there exist two Stepanov $r$-almost automorphic scalar-valued functions $L_{f}(\cdot)$ and $L_{g}(\cdot)$ such that
\begin{align}\label{minorca}
\|f(t,y)-f(t,z)\| \leq L_{f}(t)\|y-z\|_{Y},\quad t\in {\mathbb R},\ \  y,\ z\in Y ,
\end{align}
and
\begin{align}\label{minorca-ibiza}
\|g(t,y)-g(t,z)\| \leq L_{g}(t)\|y-z\|_{Y},\quad t\in {\mathbb R} ,\ \  y,\ z\in Y.
\end{align}
Set $q:=pr/p+r.$ Then $q\in [1, p)$ and
$f(\cdot,h(\cdot)) \in S^{q}WPAA({\mathbb R} \times Y,X, \rho_{1}, \rho_{2}).$
\end{thm}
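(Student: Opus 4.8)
The plan is to mimic the proof of the classical Stepanov composition theorems (e.g.\ \cite[Theorem 3.6, Theorem 3.7]{stepanoff-xia}), splitting $f(t,h(t))$ into an $S^{q}$-almost automorphic part and a weighted ergodic part, and to control the ergodic part using H\"older's inequality with the exponent chosen so that the integrability of $L_{f}(\cdot)$ combines with the weighted ergodicity of the $q(\cdot)$'s. First I would write the natural decomposition
\begin{align*}
f\bigl(t,h(t)\bigr)=g\bigl(t,g_{h}(t)\bigr)+\Bigl[f\bigl(t,h(t)\bigr)-f\bigl(t,g_{h}(t)\bigr)\Bigr]+\Bigl[q\bigl(t,g_{h}(t)\bigr)\Bigr],
\end{align*}
where $h=g_{h}+q_{h}$ with $g_{h}$ the $S^{p}$-almost automorphic part having relatively compact range in $Y$ and $q_{h}$ the ergodic part. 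The first summand $g(\cdot,g_{h}(\cdot))$ is $S^{p}$-almost automorphic (hence $S^{q}$-almost automorphic since $q<p$) by the classical composition principle for $S^{p}$-almost automorphic functions combined with the relative compactness of the range of $g_{h}$; here \eqref{minorca-ibiza} supplies the uniform Lipschitz-type control needed to pass sequences through $g$.

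Next I would handle the two remaining summands and show each lies in $S^{q}WPAA_{0}({\mathbb R},X,\rho_{1},\rho_{2})$. For the middle term, \eqref{minorca} gives the pointwise bound
$\|f(t,h(t))-f(t,g_{h}(t))\|\leq L_{f}(t)\,\|q_{h}(t)\|_{Y}$, and on each interval $[t,t+1]$ H\"older's inequality with exponents $r/q$ and (its conjugate, which after the arithmetic $1/q=1/p+1/r$ turns out to be) $p/q$ yields
$\bigl(\int_{t}^{t+1}\|f(s,h(s))-f(s,g_{h}(s))\|^{q}\,ds\bigr)^{1/q}\le \|L_{f}\|_{S^{r}}\cdot\bigl(\int_{t}^{t+1}\|q_{h}(s)\|_{Y}^{p}\,ds\bigr)^{1/p}$, so the weighted mean of this quantity against $\rho_{2}(t)\,dt$, divided by $\int_{-T}^{T}\rho_{1}(t)\,dt$, is dominated by a constant multiple of the quantity in \eqref{stepa-auto-durak-simici} for $q_{h}$, which tends to $0$. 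For the last term $q(\cdot,g_{h}(\cdot))$ one uses the uniform-on-bounded-subsets version of \eqref{stepa-auto-durak-two}: since $g_{h}(\mathbb{R})$ is relatively compact, it is contained in a bounded subset of $Y$, and the supremum over that set of the weighted means of $(\int_{t}^{t+1}\|q(s,y)\|^{p}\,ds)^{1/p}$ tends to $0$; as $q\le p$ the $S^{q}$-norm on each unit interval is bounded by the $S^{p}$-norm, so the same conclusion holds with $p$ replaced by $q$. Finally I would remark $q=pr/(p+r)\in[1,p)$ is immediate from $r\ge p/(p-1)>1$ and $r<\infty$, and that $\rho_{1},\rho_{2}\in{\mathbb U}_{\infty}$ together with the Banach-space structure recalled after Definition~\ref{novi-weighted-auto-stepa} let us assemble the three pieces into an element of $S^{q}WPAA({\mathbb R}\times Y,X,\rho_{1},\rho_{2})$.

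The step I expect to be the main obstacle is the justification that $q(\cdot,g_{h}(\cdot))$ inherits the weighted ergodic decay, because \eqref{stepa-auto-durak-two} is assumed only with a fixed $y$ and uniformly on \emph{bounded} subsets, whereas here the second argument $g_{h}(t)$ varies with $t$; the resolution is the relative compactness of $g_{h}(\mathbb R)$, which lets one cover this range by finitely many balls and reduce to the uniform bound, but care is needed to make the finite cover argument interact correctly with the Stepanov unit-interval averaging and the $\limsup$/$\lim$ in $l$ is absent here (only the $T\to\infty$ limit appears), so the argument is a routine but slightly delicate $\varepsilon$-covering estimate. A secondary subtlety is bookkeeping the exact H\"older exponents: one must check that with $1/q=1/p+1/r$ the conjugate exponents work out so that $\|L_f\|_{S^r}<\infty$ (which follows from $S^{r}$-almost automorphy, hence $S^{r}$-boundedness) is exactly what multiplies the $S^{p}$-average of $q_h$, and that the hypothesis $r\ge\max(p,p/(p-1))$ is precisely what guarantees both $q\ge 1$ and $q<p$.
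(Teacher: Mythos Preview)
Your proposal is correct and follows exactly the route the paper indicates: the paper does not spell out a proof of this theorem but states that it is obtained by ``arguing as in the proof of \cite[Theorem 3.6, Theorem 3.7]{stepanoff-xia}'' (with two weights in place of one), which is precisely the decomposition-plus-H\"older argument you sketch. Your identification of the finite-cover step for $q(\cdot,g_{h}(\cdot))$ as the only delicate point, and of the arithmetic $1/q=1/p+1/r$ behind the H\"older pairing, matches the structure of those proofs.
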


\section{Generalized weighted almost periodic (automorphic) properties of convolution products}\label{konvolucije}

We start this section by stating the following important result, expanding thus our research raised in \cite[Subsection 2.1]{EJDE}.

\begin{prop}\label{ravi-and-auto}
Suppose that $1\leq p <\infty,$ $1/p +1/q=1$
and $(R(t))_{t> 0}\subseteq L(X)$ is a strongly continuous operator family satisfying that $M:=\sum_{k=0}^{\infty}\|R(\cdot)\|_{L^{q}[k,k+1]}<\infty .$ If the space $PAP_{0}({\mathbb R},X, \rho_{1}, \rho_{2})$ is translation invariant (see Proposition \ref{tra-inv-automorphic}) and $f : {\mathbb R} \rightarrow X$ is weighted $S^{p}$-almost periodic, resp. weighted $S^{p}$-almost automorphic, then the function $F(\cdot),$ given by
\begin{align}\label{wer}
F(t):=\int^{t}_{-\infty}R(t-s)f(s)\, ds,\quad t\in {\mathbb R},
\end{align}
is well-defined and belongs to the class 
$$
AP({\mathbb R},X, \rho_{1}, \rho_{2})+S^{p}WPAA_{0}({\mathbb R},X, \rho_{1}, \rho_{2}),
$$ 
resp.,
$$
AA({\mathbb R},X, \rho_{1}, \rho_{2})+S^{p}WPAA_{0}({\mathbb R},X, \rho_{1}, \rho_{2}).
$$ 
\end{prop}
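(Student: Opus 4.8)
The plan is to decompose $f(\cdot)$ according to its weighted Stepanov structure and to analyze separately the contribution of the almost (periodic/automorphic) part and the ergodic part. Write $f(t) = g(t) + q(t)$, $t \in \mathbb{R}$, where $g(\cdot)$ is $S^p$-almost periodic (resp. $S^p$-almost automorphic) and $q(\cdot) \in L^p_{loc}(\mathbb{R}:X)$ satisfies \eqref{stepa-auto-durak-simici}. Correspondingly split $F(t) = F_1(t) + F_2(t)$ with $F_1(t) := \int_{-\infty}^t R(t-s) g(s)\, ds$ and $F_2(t) := \int_{-\infty}^t R(t-s) q(s)\, ds$. First I would verify that all three infinite convolution products are well-defined: decomposing the integral over $\mathbb{R}$ into the sum of integrals over $[t-k-1, t-k]$ and applying H\"older's inequality with exponents $p$ and $q$ on each piece, the hypothesis $M = \sum_{k=0}^\infty \|R(\cdot)\|_{L^q[k,k+1]} < \infty$ together with the $S^p$-boundedness of $f$ (which holds since weighted $S^p$-almost periodicity/automorphy implies $S^p$-boundedness, the ergodic part being $S^p$-bounded by assumption and $g$ by the general theory) gives absolute convergence with a uniform bound; this is the standard estimate from \cite[Subsection 2.1]{EJDE}.

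Next I would treat $F_1(\cdot)$. Since $g(\cdot)$ is $S^p$-almost periodic (resp. $S^p$-almost automorphic), the classical result on infinite convolution products with kernels satisfying the $M<\infty$ condition — precisely the content expanded from \cite[Subsection 2.1]{EJDE} — shows that $F_1(\cdot)$ is almost periodic (resp. almost automorphic) and bounded continuous, hence lies in $AP(\mathbb{R}:X) \subseteq AP(\mathbb{R}, X, \rho_1, \rho_2)$ (resp. $AA(\mathbb{R}, X, \rho_1, \rho_2)$). The argument here is the usual one: for an $\varepsilon$-period $\tau$ of $\hat{g}$, estimate $\|F_1(t+\tau) - F_1(t)\|$ by breaking the convolution into unit pieces and using H\"older on each, picking up the factor $M$; for the automorphic case one extracts the subsequence from the $S^p$-almost automorphy of $g$ and passes to the limit piece by piece using dominated convergence, justified again by the summable majorant coming from $M$.

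The substantive part is showing $F_2(\cdot) \in S^p WPAA_0(\mathbb{R}, X, \rho_1, \rho_2)$, i.e. that $F_2$ is locally $p$-integrable (in fact it will be continuous and bounded) and satisfies \eqref{stepa-auto-durak-simici}. Write $F_2(t) = \sum_{k=0}^\infty \int_k^{k+1} R(\sigma) q(t-\sigma)\, d\sigma =: \sum_{k=0}^\infty (R_k q)(t)$; by H\"older, $\big(\int_t^{t+1}\|(R_k q)(s)\|^p\, ds\big)^{1/p} \leq \|R(\cdot)\|_{L^q[k,k+1]} \cdot \big(\int_{t-k-1}^{t+1-k}\|q(s)\|^p\, ds\big)^{1/p} \leq 2^{1/p}\|R(\cdot)\|_{L^q[k,k+1]}\,\|q\|_{S^p}^{\mathrm{shift}}$, more usefully controlled by the Stepanov-$p$ seminorm of the translate $q(\cdot - k)$. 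The plan is to insert this into the weighted average, use the translation invariance hypothesis on $PAP_0(\mathbb{R}, X, \rho_1, \rho_2)$ (via Proposition \ref{tra-inv-automorphic}) to deduce that each translate $t \mapsto \big(\int_t^{t+1}\|q(s-k)\|^p\, ds\big)^{1/p}$ still has vanishing weighted mean, and then sum in $k$: the tail is dominated by $M \cdot \sup$ of the unit Stepanov norms of $q$, which is finite, so by a dominated-convergence argument on the series (the $k$-th term's weighted average tends to $0$ as $T\to\infty$, and the series is majorized uniformly in $T$ by a summable sequence whose sum can be made small by truncation) one gets $\lim_{T\to\infty}\frac{1}{2\int_{-T}^T \rho_1}\int_{-T}^T \big[\int_t^{t+1}\|F_2(s)\|^p ds\big]^{1/p}\rho_2(t)\, dt = 0$.

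The main obstacle I anticipate is this last interchange of the infinite sum over $k$ with the limit $T \to \infty$ in the weighted Ces\`aro average: one needs that the translation invariance of $PAP_0(\mathbb{R}, X, \rho_1, \rho_2)$ is quantitatively uniform enough — i.e. that the auxiliary function $g(s)$ from \eqref{this-blues1} controlling $\rho_2(t-k)/\rho_2(t)$ does not grow too fast in $k$ — so that the majorant for the series is genuinely summable uniformly in $T$; handling the shift by $k$ rather than by a fixed $s$ is exactly where one must be careful, and one resolves it by combining the bound $\rho_2(t-k) \leq g(k)\rho_2(t)$ with the condition \eqref{this-blues} to split each term's contribution into a "bulk" part (handled by \eqref{stepa-auto-durak-simici} for $q$ itself after the change of variables) and a "boundary" part (handled by \eqref{this-blues}), and then dominating $\sum_k \|R(\cdot)\|_{L^q[k,k+1]}$ against $M$. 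Once the series manipulation is justified, continuity and boundedness of $F_2$ follow from the uniform convergence of the defining series, completing the proof.
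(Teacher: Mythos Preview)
Your overall strategy coincides with the paper's: decompose $f=g+q$, handle the $g$-part by citing the known convolution result (the paper uses \cite[Proposition 5]{element}), and break the $q$-part into unit pieces $Q_{k}(t)=\int_{k}^{k+1}R(s)q(t-s)\,ds$, showing via H\"older and a change of variables that the weighted Stepanov mean of each $Q_{k}$ is controlled by the weighted mean of a \emph{translate} of $[\int_{t}^{t+1}\|q\|^{p}]^{1/p}$, which vanishes by the assumed translation invariance of $PAP_{0}$. The one place you diverge is the passage from the individual $Q_{k}$ to $Q=\sum_{k}Q_{k}$: you try to dominate the series inside the weighted average and then interchange $\sum_{k}$ with $\lim_{T\to\infty}$, which forces you to worry about growth of the translation factor $g(k)$ from \eqref{this-blues1}. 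The paper sidesteps this entirely: it observes that $\|Q_{k}\|_{\infty}\le \|R(\cdot)\|_{L^{q}[k,k+1]}\|q\|_{S^{p}}$, so the partial sums converge \emph{uniformly} to $Q$, and since each partial sum lies in $S^{p}WPAA_{0}({\mathbb R},X,\rho_{1},\rho_{2})$ (a finite sum of such), the uniform limit does too. Thus only the qualitative translation invariance of $PAP_{0}$ is ever invoked, once per fixed shift $k$; no uniformity in $k$ is required, and the obstacle you flag does not arise.
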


\begin{proof}
We will prove the theorem only for weighted $S^{p}$-almost automorphy.
Let $f(\cdot)=g(\cdot)+q(\cdot),$ where $g(\cdot)$ and $q(\cdot)$ satisfy conditions from Definition \ref{novi-weighted-auto-stepa}(i). 
By \cite[Proposition 5]{element}, and $S^{p}$-almost automorphy of $g(\cdot),$ we have that the function $G(\cdot)$ obtained by replacing $f(\cdot)$ in \eqref{wer} by $g(\cdot),$ is almost automorphic. Define $Q_{k}(t):=\int^{k+1}_{k}R(s)q(t-s)\, ds,$ $t\in {\mathbb R}$ ($k\in {\mathbb N}$). Arguing as in the proof of afore-mentioned proposition, we can prove that $Q_{k}(\cdot)$ is bounded and continuous on ${\mathbb R}$ for all $k\in {\mathbb N},$ as well as that $Q_{k}(\cdot)$ converges uniformly to $Q(\cdot):=\int^{\cdot}_{-\infty}R(\cdot-s)q(s)\, ds.$ Therefore, all we need to prove is that, for any integer $k\in {\mathbb N}$ given in advance, \eqref{stepa-auto-durak-simici} holds with the function $q(\cdot)$ replaced therein by $Q_{k}(\cdot).$ By the
 H\"older inequality and an elementary change of variables in double integral, we have the existence of a positive finite constant $c_{k}>0$ such that:
\begin{align*}
&\frac{1}{2\int^{T}_{-T}\rho_{1}(t)\, dt}\int^{T}_{-T}\Biggl[ \int^{t+1}_{t}\|Q_{k}(s)\|^{p}\, ds \Biggr]^{1/p} \rho_{2}(t)\, dt 
\\ & \leq \frac{\|R(\cdot)\|_{L^{q}[k,k+1]}}{2\int^{T}_{-T}\rho_{1}(t)\, dt}\int^{T}_{-T}\Biggl[ \int^{t+1}_{t}\! \! \int^{k+1}_{k}\|q(s-v)\|^{p}\, dv \, ds\Biggr]^{1/p} \rho_{2}(t)\, dt 
\\ & =\frac{\|R(\cdot)\|_{L^{q}[k,k+1]}}{2\int^{T}_{-T}\rho_{1}(t)\, dt}\int^{T}_{-T}\Biggl[ \int^{t+1}_{t}\! \! \int^{s-(k+1)}_{s-k}\|q(v)\|^{p}\, dv \, ds\Biggr]^{1/p} \rho_{2}(t)\, dt 
\\ & \leq \frac{\|R(\cdot)\|_{L^{q}[k,k+1]}}{2\int^{T}_{-T}\rho_{1}(t)\, dt}\int^{T}_{-T}\Biggl[ \int_{t-(k+1)}^{t-k}\! \! \int^{r+(k+1)}_{t}\|q(r)\|^{p}\, ds \, dr\Biggr]^{1/p} \rho_{2}(t)\, dt 
\\ & + \frac{\|R(\cdot)\|_{L^{q}[k,k+1]}}{2\int^{T}_{-T}\rho_{1}(t)\, dt}\int^{T}_{-T}\Biggl[ \int^{t-k}_{t-(k-1)}\! \! \int^{r-k}_{t+1}\|q(r)\|^{p}\, ds \, dr\Biggr]^{1/p} \rho_{2}(t)\, dt 
\\ & \leq \frac{\|R(\cdot)\|_{L^{q}[k,k+1]}}{2\int^{T}_{-T}\rho_{1}(t)\, dt}\int^{T}_{-T}\Biggl[ \int_{t-(k+1)}^{t-k}\! \! |r+k+1-t| \|q(r)\|^{p} \, dr\Biggr]^{1/p} \rho_{2}(t)\, dt 
\\ & + \frac{\|R(\cdot)\|_{L^{q}[k,k+1]}}{2\int^{T}_{-T}\rho_{1}(t)\, dt}\int^{T}_{-T}\Biggl[ \int^{t-k}_{t-(k-1)}\! \! |t+1-r+k|\|q(r)\|^{p}\, ds \, dr\Biggr]^{1/p} \rho_{2}(t)\, dt 
\\ & \leq \frac{c_{k}\|R(\cdot)\|_{L^{q}[k,k+1]}}{2\int^{T}_{-T}\rho_{1}(t)\, dt}\int^{T}_{-T}\Biggl[ \int_{t-(k+1)}^{t-k} \|q(r)\|^{p} \, dr\Biggr]^{1/p} \rho_{2}(t)\, dt 
\\ & +\frac{c_{k}\|R(\cdot)\|_{L^{q}[k,k+1]}}{2\int^{T}_{-T}\rho_{1}(t)\, dt}\int^{T}_{-T}\Biggl[ \int^{t-k}_{t-(k-1)}\|q(r)\|^{p}\, ds \, dr\Biggr]^{1/p} \rho_{2}(t)\, dt 
\\ & =\frac{c_{k}\|R(\cdot)\|_{L^{q}[k,k+1]}}{2\int^{T}_{-T}\rho_{1}(t)\, dt}\int^{T}_{-T}\Biggl[ \int_{t+)}^{t} \|q(r-(k+1))\|^{p} \, dr\Biggr]^{1/p} \rho_{2}(t)\, dt 
\\ & +\frac{c_{k}\|R(\cdot)\|_{L^{q}[k,k+1]}}{2\int^{T}_{-T}\rho_{1}(t)\, dt}\int^{T}_{-T}\Biggl[ \int^{t+1}_{t}\|q(r-(k-1))\|^{p}\, ds \, dr\Biggr]^{1/p} \rho_{2}(t)\, dt ,\quad T>0.
\end{align*}
Now the final conclusion follows from the fact that \eqref{stepa-auto-durak-simici} holds with the function $q(\cdot)$ replaced therein by $Q_{k}(\cdot)$ and the translation invariance of $PAP_{0}({\mathbb R},X, \rho_{1}, \rho_{2}).$
\end{proof}

Results for weighted Besicovitch almost automorphic functions are rather restrictive; in the following theorem, besides of condition \eqref{jebaci-aumorfija} and almost inevitable condition $p=1,$ we use the esssential boundedness of functions $f(\cdot)$ and $g(\cdot)$ as well as the condition \eqref{mallorca} on weights  
$\rho_{1}(\cdot)$ and $\rho_{2}(\cdot):$

\begin{prop}\label{kaca=autmorphic=weighted}
Suppose that $(R(t))_{t> 0}\subseteq L(X)$ is a strongly continuous operator family satisfying that
\begin{align}\label{jebaci-aumorfija}
\int^{\infty}_{0}(1+t)\|R(t)\|\, dt <\infty.
\end{align}
Let $g\in B^{1}AA({\mathbb R} : X),$ and let $g(\cdot)$ be essentially bounded. Assume, further, that $q\in B^{1}WPAA_{0}({\mathbb R},X, \rho_{1}, \rho_{2})$, $q(\cdot)$ is essentially bounded, and $f(\cdot)=g(\cdot)+q(\cdot)$.
Suppose that
\begin{align}\label{mallorca}
\lim_{T\rightarrow +\infty}\frac{\int^{T}_{-T}\rho_{2}(t)\, dt}{\int^{T}_{-T}\rho_{1}(t)\, dt}=0.
\end{align}
Then the function
$F(\cdot) ,$ given by \eqref{wer},
is bounded and belongs to the class 
$$ 
B^{1}AA({\mathbb R} : X)+B^{1}WPAA_{0}({\mathbb R},X, \rho_{1}, \rho_{2}).
$$
\end{prop}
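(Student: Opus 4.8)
The plan is to prove this by the following short route, which exploits the fact that condition \eqref{mallorca} renders the ergodic requirement \eqref{stepa-auto-durak-besik} essentially vacuous for bounded functions, so that the statement reduces to checking that $F(\cdot)$ is bounded. First I would rewrite $F(t)=\int_{0}^{\infty}R(\sigma)f(t-\sigma)\,d\sigma$ and note that \eqref{jebaci-aumorfija} yields $\int_{0}^{\infty}\|R(\sigma)\|\,d\sigma\leq\int_{0}^{\infty}(1+\sigma)\|R(\sigma)\|\,d\sigma<\infty$; since $f(\cdot)=g(\cdot)+q(\cdot)$ with both $g(\cdot)$ and $q(\cdot)$ essentially bounded, $f(\cdot)$ is essentially bounded, the integral defining $F(t)$ converges absolutely, and $\|F(t)\|\leq\|f\|_{\infty}\int_{0}^{\infty}\|R(\sigma)\|\,d\sigma$ for all $t\in{\mathbb R}$; boundedness and (uniform) continuity of $F(\cdot)$ then follow as in the preceding propositions, using continuity of translations in $L^{1}({\mathbb R}:L(X))$ applied to $R(\cdot)$ extended by zero.

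Next I would observe that, for any $h\in L^{\infty}({\mathbb R}:X)\cap L^{1}_{loc}({\mathbb R}:X)$, one has $\limsup_{l\to+\infty}\frac{1}{2l}\int_{t-l}^{t+l}\|h(s)\|\,ds\leq\|h\|_{\infty}$ for every $t\in{\mathbb R}$, whence
\begin{align*}
\frac{1}{2\int^{T}_{-T}\rho_{1}(t)\, dt}\int^{T}_{-T}\Biggl[ \limsup_{l\rightarrow +\infty}\frac{1}{2l}\int^{t+l}_{t-l}\|h(s)\|\, ds \Biggr] \rho_{2}(t)\, dt \leq \|h\|_{\infty}\,\frac{\int^{T}_{-T}\rho_{2}(t)\, dt}{2\int^{T}_{-T}\rho_{1}(t)\, dt},
\end{align*}
which tends to $0$ as $T\to+\infty$ by \eqref{mallorca}; hence every bounded locally integrable $X$-valued function belongs to $B^{1}WPAA_{0}({\mathbb R},X,\rho_{1},\rho_{2})$. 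Applying this to $h=F$ and using $0\in B^{1}AA({\mathbb R}:X)$, the decomposition $F=0+F$ shows $F\in B^{1}AA({\mathbb R}:X)+B^{1}WPAA_{0}({\mathbb R},X,\rho_{1},\rho_{2})$, as desired. In this route the decomposition $f=g+q$ and the Besicovitch almost automorphy of $g(\cdot)$ enter only through the boundedness of $f(\cdot)$, which explains the author's remark that the hypotheses here are rather restrictive.

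Alternatively, in the spirit of Propositions \ref{ravi-and-auto} and \ref{setnja}, one could retain the genuine decomposition $F=G+Q$, with $G(t)=\int_{0}^{\infty}R(\sigma)g(t-\sigma)\,d\sigma$ and $Q(t)=\int_{0}^{\infty}R(\sigma)q(t-\sigma)\,d\sigma$, proving $Q\in B^{1}WPAA_{0}$ by the reverse Fatou argument of Proposition \ref{setnja} applied to the partial sums $\sum_{k}\int_{k}^{k+1}R(\sigma)q(\cdot-\sigma)\,d\sigma$ (which converge uniformly), and $G\in B^{1}AA({\mathbb R}:X)$ by pushing a minimising subsequence $(s_{n_{k}})$ and its limit function $g^{\ast}(\cdot)$ for $g(\cdot)$ through the convolution. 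The main obstacle in this second route is precisely the $B^{1}AA$-invariance of the convolution: after interchanging the spatial average with the $\sigma$-integral, the window $[-l,l]$ turns into $[-l-\sigma,l-\sigma]$, so recovering the Besicovitch seminorm of $g(\cdot)$ forces one to control the boundary contributions uniformly in $l$ and then invoke the reverse Fatou lemma on $[0,\infty)$; this is exactly where essential boundedness of $g(\cdot)$ (and of its limit $g^{\ast}(\cdot)$) together with the $(1+\sigma)$-moment condition \eqref{jebaci-aumorfija} — rather than just $\|R(\cdot)\|\in L^{1}$ — are needed, to produce a dominating function of the form $(1+\sigma)(\|g\|_{\infty}+\|g^{\ast}\|_{\infty})\|R(\sigma)\|$. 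Since the short route above avoids this obstacle altogether, I would present that one and record the decomposition only as a remark.
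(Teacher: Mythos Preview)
Your short route is correct: under \eqref{mallorca} every essentially bounded locally integrable function lies in $B^{1}WPAA_{0}({\mathbb R},X,\rho_{1},\rho_{2})$, and since $F(\cdot)$ is bounded (from $\|f\|_{\infty}<\infty$ and $\int_{0}^{\infty}\|R(\sigma)\|\,d\sigma<\infty$), the trivial decomposition $F=0+F$ already places $F$ in the required sum. This is a legitimate and strictly shorter argument than the paper's.

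The paper proceeds differently: it keeps the natural splitting $F=G+Q$ with $G(t)=\int_{0}^{\infty}R(\sigma)g(t-\sigma)\,d\sigma$ and $Q(t)=\int_{0}^{\infty}R(\sigma)q(t-\sigma)\,d\sigma$, invokes an external convolution result to obtain $G\in B^{1}AA({\mathbb R}:X)$ (this is where the $(1+t)$-moment \eqref{jebaci-aumorfija} and the essential boundedness of $g$ are actually used), and then shows $Q\in B^{1}WPAA_{0}$ by exactly the mechanism you identified --- bounding the inner Besicovitch average by $\|q\|_{\infty}$ and applying \eqref{mallorca}. So the paper's treatment of $Q$ is your short route restricted to $Q$; the extra work goes into producing a \emph{genuine} almost-automorphic component $G$, which is more informative than the zero you use. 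Your alternative paragraph correctly anticipates this route and pinpoints where \eqref{jebaci-aumorfija} enters. One small caveat: your aside about uniform continuity of $F$ via ``continuity of translations in $L^{1}({\mathbb R}:L(X))$'' is not needed for the conclusion (only boundedness is claimed) and would require Bochner integrability of $R(\cdot)$ in $L(X)$, which is stronger than the strong continuity assumed; best to drop that remark.
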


\begin{proof}
By \cite[Proposition 7]{element}, we have that  the function
$G(\cdot) ,$ given by \eqref{wer} with the function $f(\cdot)$ replaced by $g(\cdot)$ therein, belongs to class $ B^{1}AA({\mathbb R} : X).$ The argumentation contained in the proof of this proposition also shows that the function $F(\cdot)$ is essentially bounded, so that it suffices to show that the function $Q(\cdot),$ obtained by replacing the function $f(\cdot)$ by $q(\cdot)$ in \eqref{wer}, satisfies \eqref{stepa-auto-durak-besik}. Towards this end,
we note that
\begin{align*}
&\frac{1}{2\int^{T}_{-T}\rho_{1}(t)\, dt}\int^{T}_{-T}\Biggl[ \limsup_{l\rightarrow +\infty}\frac{1}{2l}\int^{t+l}_{t-l}\|Q(s)\|\, ds \Biggr] \rho_{2}(t)\, dt 
\\ &=\frac{1}{2\int^{T}_{-T}\rho_{1}(t)\, dt}\int^{T}_{-T}\Biggl[ \lim_{l\rightarrow +\infty}\sup_{y\geq l}\frac{1}{2y}\int^{t+y}_{t-y}\Biggl\|\int^{\infty}_{0}R(v)q(s-v)\, dv\Biggr\|\, ds\Biggr] \rho_{2}(t)\, dt 
\\ & = \frac{1}{2\int^{T}_{-T}\rho_{1}(t)\, dt}\int^{T}_{-T}\Biggl[\lim_{l\rightarrow +\infty}\sup_{y\geq l}\frac{1}{2y}\int^{\infty}_{0}\int^{t+y}_{t-y}\|R(v)\| \|q(s-v)\|\, ds\, dv \Biggr] \rho_{2}(t)\, dt 
\\ & = \frac{1}{2\int^{T}_{-T}\rho_{1}(t)\, dt}\int^{T}_{-T}\Biggl[\lim_{l\rightarrow +\infty}\sup_{y\geq l}\frac{1}{2y}\int^{\infty}_{0}\|R(v)\| \int^{t+y-v}_{t-y-v}\|q(r)\|\, dr \, dv \Biggr] \rho_{2}(t)\, dt 
\\ & \leq \frac{\|q\|_{\infty}}{2\int^{T}_{-T}\rho_{1}(t)\, dt}\int^{T}_{-T}\Biggl[\lim_{l\rightarrow +\infty}\sup_{y\geq l}\int^{\infty}_{0}\|R(v)\| \, dv \Biggr] \rho_{2}(t)\, dt 
\\ &=\frac{\|q\|_{\infty}\int^{\infty}_{0}\|R(v)\| \, dv}{2\int^{T}_{-T}\rho_{1}(t)\, dt}\int^{T}_{-T}\rho_{2}(t)\, dt ,\quad T>0.
\end{align*}
The proof of proposition follows by applying \eqref{mallorca}.
\end{proof}

\begin{rem}\label{gejak}
Before proceeding further, let us only observe that the validity of \eqref{mallorca} implies that for any $q\in B^{1}WPAA_{0}({\mathbb R},X, \rho_{1}, \rho_{2})$ such that $q(\cdot)$ is essentially bounded, we have that the condition $\int^{\infty}_{0}\|R(v)\| \, dv <\infty$
is sufficient to ensure that the function $Q(\cdot)$ defined above is in class $B^{1}WPAA_{0}({\mathbb R},X, \rho_{1}, \rho_{2}).$ Furthermore, the condition $\int^{\infty}_{0}\|R(v)\| \, dv <\infty$ implies that the function $G(\cdot)$ defined above is almost automorphic provided that $g(\cdot)$ is almost automorphic (see \cite[Proposition 4]{element}).
\end{rem}

\begin{rem}\label{gejak-dzibut}
As already mentioned, the class of Besicovitch-Doss $p$-almost periodic functions has been recently introduced in \cite{NSJOM-besik}, $1\leq p<\infty.$ Denote by
${\mathrm B}^{1} AP({\mathbb R} : X)$ the corresponding class with $p=1.$ Then the validity of \eqref{jebaci-aumorfija} implies that
the function $G(\cdot),$ obtained by replacing $f(\cdot)$ in \eqref{wer} by $g(\cdot),$ is again in the class $ 
{\mathrm B}^{1}AA({\mathbb R} : X);$ cf. \cite[Theorem 3.1]{NSJOM-besik}.
\end{rem}

For our investigations of the finite convolution product, we need to slightly addapt the notation used so far. Set ${\mathbb U}_{p}:=\{ \rho \in L_{loc}^{1}([0,\infty)) : \rho(t)>0\mbox{ a.e. }t\geq 0\},$ ${\mathbb U}_{b,p}:=\{ \rho \in L^{\infty}([0,\infty)) : \rho(t)>0\mbox{ a.e. }t\geq 0\}$ and
${\mathbb U}_{\infty, p}:=\{ \rho \in {\mathbb U} : \nu(T,\rho):=\lim_{T\rightarrow +\infty}\int^{T}_{0}\rho(t)\, dt =\infty\}.$ Then ${\mathbb U}_{b,p} \subseteq {\mathbb U}_{\infty, p} \subseteq {\mathbb U}_{p}.$
If $\rho_{1},\ \rho_{2}\in {\mathbb U}_{\infty ,p},$ then we set
\begin{align*}
PAP_{0}& \bigl([0,\infty), X,\rho_{1},\rho_{2}\bigr)
\\ & :=\Biggl\{ f\in C_{b}([0,\infty): X) : \lim_{T\rightarrow +\infty}\frac{1}{\int^{T}_{0}\rho_{1}(t)\, dt}\int^{T}_{0}\|f(t)\| \rho_{2}(t)\, dt =0 \Biggr\}
\end{align*}
and
\begin{align*}
& PAP_{0}\bigl([0,\infty) \times Y, X,\rho_{1},\rho_{2}\bigr):=\Biggl\{ f\in C_{b}([0,\infty) \times Y: X) :
\\ & \lim_{T\rightarrow +\infty}\frac{1}{\int^{T}_{0}\rho_{1}(t)\, dt}\int^{T}_{0}\|f(t,y)\| \rho_{2}(t)\, dt =0,
\mbox{ uniformly on bounded subsets of }Y\Biggr\};
\end{align*}
see also \cite{blot} and \cite{diagana-doublepp}.

Concerning the invariance of space $PAP_{0}([0,\infty), X,\rho_{1},\rho_{2})$ under the action of finite convolution product,
we have the following result:

\begin{prop}\label{nek-nea}
Assume that there exists a non-negative measurable function $g : [0,\infty) \rightarrow [0,\infty)$ such that $\rho_{2}(t)\leq g(s)\rho_{2}(t-s)$ for $0\leq s\leq t<\infty.$ Assume that $(R(t))_{t>0}\subseteq L(X)$ is a strongly continuous operator family satisfying
\begin{align}\label{milica-sekica}
\int^{\infty}_{0}\bigl(1+g(s)\bigr)\|R(s)\|\, ds <\infty.
\end{align}
Let $f \in PAP_{0}([0,\infty), X,\rho_{1},\rho_{2}).$ Define
$
F(t):=\int^{t}_{0}R(t-s)f(s)\, ds,$ $t\geq 0.
$
Then we have $F \in PAP_{0}([0,\infty), X,\rho_{1},\rho_{2}).$
\end{prop}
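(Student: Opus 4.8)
The plan is to check that $F(\cdot)$ satisfies the two requirements defining membership in $PAP_{0}([0,\infty), X,\rho_{1},\rho_{2})$: namely that $F\in C_{b}([0,\infty):X),$ and that the $\rho_{1},\rho_{2}$-weighted mean of $\|F(\cdot)\|$ tends to zero.

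First I would record that \eqref{milica-sekica} together with $g\geq 0$ gives $\int^{\infty}_{0}\|R(s)\|\, ds<\infty,$ whence for every $t\geq 0$ one has $\|F(t)\|\leq \|f\|_{\infty}\int^{t}_{0}\|R(u)\|\, du\leq \|f\|_{\infty}\int^{\infty}_{0}\|R(u)\|\, du,$ so $F(\cdot)$ is bounded. Continuity of $F(\cdot)$ follows after writing $F(t)=\int^{t}_{0}R(u)f(t-u)\, du$ and splitting $F(t)-F(t_{0})$ (say, for $t>t_{0}$) into the term $\int^{t_{0}}_{0}R(u)[f(t-u)-f(t_{0}-u)]\, du,$ which tends to $0$ by the dominated convergence theorem since $f$ is continuous and $2\|f\|_{\infty}\|R(\cdot)\|\in L^{1}([0,\infty)),$ and the boundary term $\int^{t}_{t_{0}}R(u)f(t-u)\, du,$ whose norm is at most $\|f\|_{\infty}\int^{t}_{t_{0}}\|R(u)\|\, du\to 0$; the case $t<t_{0}$ is symmetric.

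The heart of the proof is the ergodic estimate. Fix $T>0$. Starting from $\|F(t)\|\leq \int^{t}_{0}\|R(t-s)\|\,\|f(s)\|\, ds,$ I would apply Tonelli's theorem (all integrands being non-negative), perform the substitution $u=t-s$ in the inner integral, interchange the order of integration over the triangular region $\{(u,t):0\leq u\leq t\leq T\},$ then on the resulting inner integral invoke the weight comparison $\rho_{2}(t)\leq g(u)\rho_{2}(t-u)$ (legitimate precisely because $0\leq u\leq t$ there) and substitute $v=t-u,$ enlarging $[0,T-u]$ to $[0,T]$. This chain of elementary manipulations yields
\begin{align*}
\int^{T}_{0}\|F(t)\| \rho_{2}(t)\, dt &\leq \int^{T}_{0}\|R(u)\| \left( \int^{T}_{u}\|f(t-u)\| \rho_{2}(t)\, dt \right) du
\\ &\leq \int^{T}_{0}g(u)\|R(u)\| \left( \int^{T-u}_{0}\|f(v)\| \rho_{2}(v)\, dv \right) du
\\ &\leq \left( \int^{\infty}_{0}g(u)\|R(u)\|\, du \right) \int^{T}_{0}\|f(v)\| \rho_{2}(v)\, dv ,
\end{align*}
and the first factor on the right is finite by \eqref{milica-sekica}.

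Dividing through by $\int^{T}_{0}\rho_{1}(t)\, dt$ and letting $T\to +\infty,$ the right-hand side tends to $0$ because $f\in PAP_{0}([0,\infty), X,\rho_{1},\rho_{2}),$ so the left-hand side does as well; this is exactly the ergodicity condition for $F(\cdot),$ and the proof is complete. The only place that demands any care is keeping track of the successive changes of variables and verifying that the hypothesis on the weights is invoked on the correct triangular region; beyond that, no genuine obstacle arises, since everything reduces to Tonelli's theorem and the definition of $PAP_{0}([0,\infty), X,\rho_{1},\rho_{2}).$
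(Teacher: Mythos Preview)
Your proof is correct and follows essentially the same route as the paper's: the paper dispatches $F\in C_{b}([0,\infty):X)$ by citing a standard reference, and for the ergodic part it performs exactly the same change of variables, Fubini/Tonelli interchange over the triangle $0\leq s\leq t\leq T$, application of $\rho_{2}(t)\leq g(s)\rho_{2}(t-s)$, and enlargement of $[0,T-s]$ to $[0,T]$ that you carry out. The only difference is cosmetic (you spell out the boundedness and continuity arguments explicitly, and you name the substitution variable $u$ rather than $s$).
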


\begin{proof}
It can be easily verified that $F\in C_{b}([0,\infty) : X);$ cf. \cite[Chapter 1]{a43} for more details. The claimed statement follows from the prescribed assumptions and the next computation:
\begin{align*}
\frac{1}{\int^{T}_{0} \rho_{1}(t)\, dt}& \int^{T}_{0}\|F(t)\| \rho_{2}(t)\, dt
\\ & \leq \frac{1}{\int^{T}_{0}\rho_{1}(t)\, dt}\int^{T}_{0}\Biggl[\int^{t}_{0}\|R(s)\|  \|f(t-s)\|\, ds \Biggr]  \rho_{2}(t)\, dt
\\ & =\frac{1}{\int^{T}_{0}\rho_{1}(t)\, dt}\int^{T}_{0}\int^{T}_{s}\|R(s)\|  \|f(t-s)\| \rho_{2}(t) \, dt \, ds
\\ & \leq \frac{1}{\int^{T}_{0}\rho_{1}(t)\, dt}\int^{T}_{0}g(s) \|R(s)\|\Biggl[ \int^{T}_{s}  \|f(t-s)\| \rho_{2}(t-s) \, dt\Biggr] \, ds
\\ & =\frac{1}{\int^{T}_{0}\rho_{1}(t)\, dt}\int^{T}_{0}g(s) \|R(s)\|\Biggl[ \int^{T-s}_{0}  \|f(r)\| \rho_{2}(r) \, dr\Biggr] \, ds
\\ & \leq \Biggl[ \int^{\infty}_{0}g(s)\|R(s)\|\, ds \Biggr] \cdot \Biggl[ \frac{1}{\int^{T}_{0}\rho_{1}(t)\, dt} \int^{T}_{0}  \|f(r)\| \rho_{2}(r) \, dr\Biggr],\quad T>0.
\end{align*} 
\end{proof}

\begin{rem}\label{velinov-auto}
Assume, in place of condition \eqref{milica-sekica}, that $1\leq p<\infty,$ $1/p+1/q=1,$  
$M=\sum_{k=0}^{\infty}\|R(\cdot)\|_{L^{q}[k,k+1]}<\infty $ and $g(\cdot)$ additionally satisfies that it is $S^{p}$-bounded. Then, again, $F \in PAP_{0}([0,\infty), X,\rho_{1},\rho_{2})$ and here it is only worth noting that we can use similar arguments as above and the following estimate
$$
\int^{T}_{0}g(s)\|R(s)\|\, ds \leq \sum_{k=0}^{\lceil T \rceil}\|R(\cdot)\|_{L^{q}[k,k+1]} \|g(\cdot)\|_{L^{p}[k,k+1]} \leq M \|g\|_{S^{p}},\quad T>0. 
$$
\end{rem}

Combining Proposition \ref{ravi-and-auto}, Proposition \ref{nek-nea}-Remark \ref{velinov-auto} and the argumentation contained in the proof of \cite[Proposition 2.13]{EJDE}, we can clarify the following proposition (weighted $S^{p}$-almost periodic case can be considered similarly):

\begin{prop}\label{Cauchy}
Assume that $1\leq p<\infty,$ $1/p+1/q=1,$ and there exists a $S^{p}$-bounded function $g : [0,\infty) \rightarrow [0,\infty)$ such that $\rho_{2}(t)\leq g(s)\rho_{2}(t-s)$ for $0\leq s\leq t<\infty.$ Assume, further, that $(R(t))_{t>0}\subseteq L(X)$ is a strongly continuous operator family satisfying that for each $s\geq 0$ we have $M_{s}:=\sum_{k=0}^{\infty}\|R(\cdot)\|_{L^{q}[s+k,s+k+1]}<\infty ,$ as well as that the space $PAA_{0}({\mathbb R},X, \rho_{1}, \rho_{2})$ is translation invariant and $g : {\mathbb R} \rightarrow X$ is weighted $S^{p}$-almost automorphic. If  $q \in PAP_{0}([0,\infty), X,\rho_{1},\rho_{2}),$ then the function ${\mathbf F}(\cdot),$ given by
\begin{align}\label{wer}
{\mathbf F}(t):=\int^{\infty}_{0}R(t-s)\bigl[g(s)+q(s)\bigr]\, ds,\quad t\geq 0,
\end{align}
is well-defined and belongs to the class 
\begin{align*}
AA_{[0,\infty)}({\mathbb R},X, \rho_{1}, \rho_{2})&+S^{p}WPAA_{0}^{[0,\infty)}({\mathbb R},X, \rho_{1}, \rho_{2})
\\ & +S^{p}_{0}([0,\infty) :X)+PAP_{0}([0,\infty), X,\rho_{1},\rho_{2}).
\end{align*}
Here, $AA_{[0,\infty)}({\mathbb R},X, \rho_{1}, \rho_{2})$ and $S^{p}WPAA_{0}^{[0,\infty)}({\mathbb R},X, \rho_{1}, \rho_{2})$
denote the spaces consisting of restrictions of functions belonging to $AA({\mathbb R},X, \rho_{1}, \rho_{2})$ and\\ $S^{p}WPAA_{0}({\mathbb R},X, \rho_{1}, \rho_{2})$ to the non-negative real axis, respectively.
\end{prop}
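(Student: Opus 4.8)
The plan is to reduce the claim to the three results cited just before the proposition by writing ${\mathbf F}(\cdot)$ as a sum of an infinite–convolution term built from the weighted $S^{p}$-almost automorphic function, a small remainder coming from truncating the domain of integration, and a finite–convolution term built from the $PAP_{0}$-component. Throughout one adopts the usual convention $R(\tau):=0$ for $\tau\leq 0$, so that for $t\geq 0$ one has ${\mathbf F}(t)=\int^{t}_{0}R(t-s)[g(s)+q(s)]\, ds$; absolute convergence of this integral for each fixed $t\geq 0$, hence well-definedness of ${\mathbf F}$, follows from the H\"older inequality on the unit subintervals of $[0,t]$, the $S^{p}$-boundedness of $g(\cdot)$ and of $q(\cdot)$ (note $q\in PAP_{0}([0,\infty),X,\rho_{1},\rho_{2})\subseteq C_{b}([0,\infty):X)$), and the finiteness of $M_{0}$.

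First I would record, for $t\geq 0$, the decomposition
\begin{align*}
{\mathbf F}(t)=\underbrace{\int^{t}_{-\infty}R(t-s)g(s)\, ds}_{=:F_{g}(t)}-\underbrace{\int^{0}_{-\infty}R(t-s)g(s)\, ds}_{=:T_{g}(t)}+\underbrace{\int^{t}_{0}R(t-s)q(s)\, ds}_{=:F_{q}(t)},
\end{align*}
which is legitimate because $g(\cdot)$ is defined on all of ${\mathbb R}$. To $F_{g}(\cdot)$ I would apply Proposition \ref{ravi-and-auto} in its weighted $S^{p}$-almost automorphic version: its hypotheses hold here since $M=M_{0}<\infty$, the space $PAA_{0}({\mathbb R},X,\rho_{1},\rho_{2})$ is translation invariant, and $g(\cdot)$ is weighted $S^{p}$-almost automorphic; this places $F_{g}$ in $AA({\mathbb R},X,\rho_{1},\rho_{2})+S^{p}WPAA_{0}({\mathbb R},X,\rho_{1},\rho_{2})$, so its restriction to $[0,\infty)$ lies in $AA_{[0,\infty)}({\mathbb R},X,\rho_{1},\rho_{2})+S^{p}WPAA_{0}^{[0,\infty)}({\mathbb R},X,\rho_{1},\rho_{2})$. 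The term $F_{q}(\cdot)$ is precisely the finite convolution product of Proposition \ref{nek-nea} with $q$ in place of $f$, so Remark \ref{velinov-auto} applies (the scalar comparison function in the weight inequality is $S^{p}$-bounded and $M=M_{0}<\infty$), giving $F_{q}\in PAP_{0}([0,\infty),X,\rho_{1},\rho_{2})$.

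The only genuinely new estimate — and the step I expect to require the most care — concerns the remainder $T_{g}(\cdot)$. Substituting $u=t-s$ gives $T_{g}(t)=\int^{\infty}_{t}R(u)g(t-u)\, du$; decomposing $[t,\infty)$ into the unit intervals $[t+k,t+k+1]$ and applying H\"older together with the $S^{p}$-boundedness of $g(\cdot)$ one obtains
\begin{align*}
\|T_{g}(t)\|\leq \sum_{k=0}^{\infty}\|R(\cdot)\|_{L^{q}[t+k,t+k+1]}\,\|g\|_{L^{p}[-k-1,-k]}\leq \|g\|_{S^{p}}\,M_{t},\quad t\geq 0 .
\end{align*}
From $M_{0}<\infty$ one deduces $M_{t}\to 0$ as $t\to+\infty$ (for instance, $[t+k,t+k+1]\subseteq[\lfloor t\rfloor+k,\lfloor t\rfloor+k+2]$ yields $M_{t}\leq 2\sum_{j\geq \lfloor t\rfloor}\|R(\cdot)\|_{L^{q}[j,j+1]}$, a tail of the convergent series $M_{0}$), and $T_{g}(\cdot)$ is continuous (by dominated convergence, or since $T_{g}=F_{g}-\int^{\cdot}_{0}R(\cdot-s)g(s)\, ds$ with both summands continuous). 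Hence $T_{g}\in C_{0}([0,\infty):X)\subseteq S^{p}_{0}([0,\infty):X)$. Combining the three contributions, and using that $S^{p}_{0}([0,\infty):X)$ is a vector space, yields the asserted membership of ${\mathbf F}(\cdot)$. It is also worth flagging the harmless overloading of the symbol $g$ in the hypotheses — it denotes both the scalar function in the weight inequality $\rho_{2}(t)\leq g(s)\rho_{2}(t-s)$ and the $X$-valued weighted $S^{p}$-almost automorphic function — each of which enters the argument in its own role.
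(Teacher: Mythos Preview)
Your proof is correct and follows exactly the route the paper indicates: the paper does not give a self-contained argument but merely states that the result follows by ``combining Proposition \ref{ravi-and-auto}, Proposition \ref{nek-nea}--Remark \ref{velinov-auto} and the argumentation contained in the proof of \cite[Proposition 2.13]{EJDE},'' and your decomposition ${\mathbf F}=F_{g}-T_{g}+F_{q}$ is precisely this combination, with the tail estimate $\|T_{g}(t)\|\leq \|g\|_{S^{p}}M_{t}\to 0$ supplying the piece borrowed from \cite{EJDE}. Your observation that $M_{0}<\infty$ alone already forces $M_{t}\to 0$ (so that the hypothesis ``$M_{s}<\infty$ for each $s\geq 0$'' is in fact redundant), and your remark about the overloaded symbol $g$, are both accurate side comments.
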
 

\section{Weighted pseudo-almost automorphic solutions of semilinear (fractional) Cauchy inclusions}\label{semilinear-spring}

In this section, we will clarify a few results concerning the existence and uniqueness of weighted automorphic solutions of semilinear (fractional) Cauchy inclusions; the existence and uniqueness of weighted automorphic solutions can be analyzed similarly.  
Because of some obvious complications appearing in the study of existence and uniqueness of weighted pseudo-almost periodic (automorphic) solutions defined only for non-negative values of time $t$ (see Proposition \ref{Cauchy}), henceforth we will restrict ourselves to the study of abstract Cauchy inclusions \eqref{favini}-\eqref{left}, only.
The organization of section is very similar to that of \cite[Section 5]{element}; the proofs of structural results are omitted since they can be deduced by using composition principles from Subsection \ref{PIVOTI} and the argumentation contained in \cite{nova-mono}.

We deal with the class of
multivalued linear operators ${\mathcal A}$ satisfying the condition \cite[(P), p. 47]{faviniyagi} introduced by A. Favini and A. Yagi:
\begin{itemize} \index{removable singularity at zero}
\item[(P)]
There exist finite constants $c,\ M>0$ and $\beta \in (0,1]$ such that\index{condition!(PW)}
$$
\Psi:=\Psi_{c}:=\Bigl\{ \lambda \in {\mathbb C} : \Re \lambda \geq -c\bigl( |\Im \lambda| +1 \bigr) \Bigr\} \subseteq \rho({\mathcal A})
$$
and
$$
\| R(\lambda : {\mathcal A})\| \leq M\bigl( 1+|\lambda|\bigr)^{-\beta},\quad \lambda \in \Psi .
$$
\end{itemize}
We define the fractional power $(-{\mathcal A})^{\theta}$ for $\theta >\beta-1$ as usually.
Set $Y:=[D((-{\mathcal A})^{\theta})]$ and $\|\cdot\|_{Y}:=\|\cdot\|_{[D((-{\mathcal A})^{\theta})]};$ then $Y$ is a Banach space that is continuously embedded in $X.$
Define, further,
\begin{align*}
T_{\nu}(t)x:=\frac{1}{2\pi i}\int_{\Gamma}(-\lambda)^{\nu}e^{\lambda t}\bigl(  \lambda -{\mathcal A} \bigr)^{-1}x\, d\lambda,\quad x\in X,\ t>0 \ (\nu>0),
\end{align*}
where $\Gamma$ denotes the upwards oriented curve $\lambda=-c(|\eta|+1)+i\eta$ ($\eta \in {\mathbb R}$).
Then there exists a finite constant $M>0$ such that:
\begin{itemize}
\item[(A)] $\|  T_{\nu}(t) \|\leq Me^{-ct}t^{ \beta-\nu-1},\ t>0,\ \nu>0.$
\end{itemize}
Assume that $L_{f}(\cdot)$ is a locally bounded non-negative function, and $M$ denotes the constant from (A), with $\nu=\theta.$
Set, for every $n\in {\mathbb N},$
\begin{align}
\notag M_{n}:=M^{n}\sup_{t\in {\mathbb R}}& \int^{t}_{-\infty}\int^{x_{n}}_{-\infty}\cdot \cdot \cdot \int^{x_{2}}_{-\infty}e^{-c(t-x_{n})}
\bigl(t-x_{n}\bigr)^{\beta -\theta-1}
\\\label{em-en-a} & \times  \prod^{n}_{i=2}e^{-c(x_{i}-x_{i-1})}\bigl(x_{i}-x_{i-1}\bigr)^{\beta -\theta-1} \prod^{n}_{i=1}L_{f}(x_{i})\, dx_{1}\, dx_{2}\cdot \cdot \cdot \, dx_{n}.
\end{align}

Define
\begin{align*}
T_{\gamma,\nu}(t)x:=t^{\gamma \nu}\int^{\infty}_{0}s^{\nu}\Phi_{\gamma}( s)T_{0}\bigl( st^{\gamma}\bigr)x\, ds,\quad t>0,\ x\in X,\ \nu >-\beta,
\end{align*}
$$
S_{\gamma}(t):=T_{\gamma,0}(t)\mbox{ and }P_{\gamma}(t):=\gamma T_{\gamma,1}(t)/t^{\gamma},\quad t>0,
$$
where 
$\Phi_{\gamma}(\cdot)$ denotes the famous Wright function\index{function!Wright}, defined by
$$
\Phi_{\gamma}(z):=\sum \limits_{n=0}^{\infty}
\frac{(-z)^{n}}{n! \Gamma (1-\gamma -\gamma n)},\quad z\in {\mathbb C}.
$$
Set also
\begin{align*}
R_{\gamma}(t):=& t^{\gamma -1}P_{\gamma}(t),\ t>0\mbox{ and }
\\ & R_{\gamma}^{\theta}(t):=\gamma t^{\gamma -1}\int^{\infty}_{0}s\Phi_{\gamma}(s)T_{\theta}\bigl(st^{\gamma}\bigr)x\, ds,\ t>0,\ x\in X.
\end{align*}

Consider the condition \eqref{minorcas}
with $L_{f}(\cdot)$ being a measurable non-negative function. Set, for every $n\in {\mathbb N},$
\begin{align*}
B_{n}:=&\sup_{t\geq 0}\int^{t}_{-\infty}\int^{x_{n}}_{-\infty}\cdot \cdot \cdot \int^{x_{2}}_{-\infty}
\bigl\|R_{\gamma}^{\theta}(t-x_{n})\bigr\|
\\ & \times \prod^{n}_{i=2}\bigl\|R_{\gamma}^{\theta}(x_{i}-x_{i-1})\bigr\| \prod^{n}_{i=1}L_{f}(x_{i})\, dx_{1}\, dx_{2}\cdot \cdot \cdot \, dx_{n}.
\end{align*}

If $(Z,\|\cdot \|_{Z})$ is a complex Banach space that is continuously embedded in $X,$ then we use the following notion of a mild solution of \eqref{favini}, resp., \eqref{left}:

\begin{defn}\label{joka}
Assume that $f : I \times Z \rightarrow X.$
By a mild solution of \eqref{favini}, we mean any $ Z$-continuous function $u(\cdot)$ such that $u(t)= (\Lambda u)(t),$ $t\in {\mathbb R},$ where
$$
t\mapsto (\Lambda u)(t):=\int_{-\infty}^{t}T(t-s)f(s,u(s))\, ds,\ t\in {\mathbb R}.
$$
\end{defn}

\begin{defn}\label{durak}
Assume that $f : I \times Z \rightarrow X.$
By a mild solution of \eqref{left}, we mean any $ Z$-continuous function $u(\cdot)$ such that $u(t)= (\Lambda_{\gamma} u)(t),$ $t\in {\mathbb R},$ where
$$
t\mapsto (\Lambda_{\gamma} u)(t):=\int_{-\infty}^{t}(t-s)^{\gamma -1}P_{\gamma}(t-s)f(s,u(s))\, ds,\ t\in {\mathbb R}.
$$
\end{defn}

Suppose that $M>0$ denotes the constant from (A), and the sequence $(M_{n})$ is defined through \eqref{em-en-a}. 
We will first state the following analogues of \cite[Theorem 2.10.3-Theorem
2.10.4]{nova-mono} and \cite[Theorem 2.10.9-Theorem
2.10.10]{nova-mono}, which can be also clarified for weighted pseudo-almost periodicity.

\begin{thm}\label{be-spore}
Let
$\rho_{1},\ \rho_{2} \in {\mathbb U}_{T}$ and $1< p<\infty.$ Suppose that  \emph{(P)} holds, $\beta>\theta >1-\beta$ and the following conditions hold:
\begin{itemize}
\item[(i)]
$f : {\mathbb R}\times Y\rightarrow X$ is weighted $S^{p}$-pseudo almost automorphic, 
$
f(t,y)=g(t,y)+q(t,y),
$ $t\in {\mathbb R},$ where $g(\cdot,\cdot)$ is $S^{p}$-almost automorphic and $q(\cdot,\cdot)$ satisfies \eqref{stepa-auto-durak-two},
uniformly on bounded subsets of $Y.$ 
\item[(ii)] Assume that  $ r\geq \max (p, p/p -1),$ $r>p/p-1$ and there exist two Stepanov $r$-almost automorphic scalar-valued functions $L_{f}(\cdot)$ and $L_{g}(\cdot)$ such that \eqref{minorca}-\eqref{minorca-ibiza} hold.
Set $q:=\frac{pr}{p+r}$ and $q':=\frac{pr}{pr-p-r}.$
\end{itemize}
If
$
q'(\beta -\theta-1)>-1
$
and $M_{n}<1$ for some $n\in {\mathbb N},$ then there exists an weighted pseudo-almost automorphic mild solution of inclusion \emph{\eqref{favini}}. The uniqueness of mild solutions holds in the case that ${\mathcal A}$ is single-valued.
\end{thm}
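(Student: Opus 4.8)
The plan is to solve the fixed-point equation $u = \Lambda u$ from Definition \ref{joka} (with $T(\cdot):=T_{\theta}(\cdot)$) inside the Banach space $\mathcal{S}:=WPAA(\mathbb{R},Y,\rho_{1},\rho_{2})$, which is a Banach space under the sup-norm because $\rho_{1},\rho_{2}\in\mathbb{U}_{T}$. First I would check that $\Lambda$ maps $\mathcal{S}$ into itself. Given $u\in\mathcal{S}$, write $u=g+q$ with $g\in AA(\mathbb{R},Y)$ and $q\in PAP_{0}(\mathbb{R},Y,\rho_{1},\rho_{2})$; since the range of $g$ is automatically relatively compact in $Y$ and an almost automorphic function is $S^{p}$-almost automorphic, $u$ is weighted $S^{p}$-pseudo almost automorphic as a $Y$-valued map with exactly the structure required in Theorem \ref{bibl-auto}. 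Hypothesis (i) together with the Stepanov--Lipschitz bounds \eqref{minorca}--\eqref{minorca-ibiza} of (ii) then lets me apply that composition principle to conclude that $s\mapsto f(s,u(s))$ belongs to $S^{q}WPAA(\mathbb{R},X,\rho_{1},\rho_{2})$ with $q=pr/(p+r)\in[1,p)$.

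Next I would feed this into the weighted-convolution result, Proposition \ref{ravi-and-auto}. For that I need the family $R(\cdot)=T_{\theta}(\cdot)$ to satisfy $\sum_{k=0}^{\infty}\|R(\cdot)\|_{L^{q'}[k,k+1]}<\infty$, where $q'=pr/(pr-p-r)$ is the conjugate exponent of $q$. This follows from estimate (A) with $\nu=\theta$: on each $[k,k+1]$ with $k\geq 1$ the factor $e^{-ct}$ gives a geometrically summable bound, while on $[0,1]$ the term $t^{\beta-\theta-1}$ is in $L^{q'}$ precisely because $q'(\beta-\theta-1)>-1$. Since $\rho_{1},\rho_{2}\in\mathbb{U}_{T}$, the space $PAP_{0}(\mathbb{R},X,\rho_{1},\rho_{2})$ is translation invariant (cf. Proposition \ref{tra-inv-automorphic}), so Proposition \ref{ravi-and-auto} (with $p,q$ there replaced by $q,q'$) gives $\Lambda u\in AA(\mathbb{R},X,\rho_{1},\rho_{2})+S^{q}WPAA_{0}(\mathbb{R},X,\rho_{1},\rho_{2})$; since, by the construction of $T_{\theta}$ and (A), $T_{\theta}(t)$ maps $X$ into $Y$ with a bound of the same form, running the same estimates in the $Y$-norm shows that $\Lambda u$ is a bounded $Y$-continuous function whose ergodic component lies in $PAP_{0}(\mathbb{R},Y,\rho_{1},\rho_{2})$, i.e. $\Lambda u\in\mathcal{S}$. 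These last regularity upgrades are the structural facts carried out, as announced, exactly as in \cite{nova-mono}.

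The contraction step uses the iterated Lipschitz estimate: for $u,v\in\mathcal{S}$, chaining \eqref{minorca} with the bound (A) $n$ times and absorbing every kernel factor into a single $n$-fold integral yields
\begin{align*}
\bigl\|(\Lambda^{n}u)(t)-(\Lambda^{n}v)(t)\bigr\|_{Y}\leq M_{n}\,\sup_{s\in\mathbb{R}}\|u(s)-v(s)\|_{Y},\quad t\in\mathbb{R},
\end{align*}
with $M_{n}$ the constant from \eqref{em-en-a}. By hypothesis $M_{n}<1$ for some $n\in\mathbb{N}$, so $\Lambda^{n}$ is a contraction on $\mathcal{S}$, and the generalized Banach contraction principle produces a unique fixed point $u^{\ast}\in\mathcal{S}$, which is the desired weighted pseudo-almost automorphic mild solution of \eqref{favini}. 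When $\mathcal{A}$ is single-valued the same chaining is valid for every $Y$-continuous mild solution (there is then no indeterminacy stemming from $\mathcal{A}0\neq\{0\}$), so $u^{\ast}$ is the unique mild solution.

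The hard part I expect is the self-mapping step, more precisely the bookkeeping of integrability exponents: the composition principle inevitably drops $p$ down to $q=pr/(p+r)$, and one must ensure this degraded Stepanov integrability still cooperates with the singularity $t^{\beta-\theta-1}$ of $T_{\theta}$ near $0$ — which is exactly the role of the conditions $\beta>\theta>1-\beta$ and $q'(\beta-\theta-1)>-1$. A secondary nuisance is verifying that the $S^{q}$-ergodic part emerging from Proposition \ref{ravi-and-auto} becomes, after convolution with $T_{\theta}$, a genuine continuous element of $PAP_{0}(\mathbb{R},Y,\rho_{1},\rho_{2})$, so that the fixed point really lives in $WPAA(\mathbb{R},Y,\rho_{1},\rho_{2})$ rather than in a Stepanov enlargement of it.
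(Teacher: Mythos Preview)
Your proposal is correct and follows essentially the same route the paper indicates: the paper omits the proof, stating only that it is obtained by combining the composition principles of Subsection~\ref{PIVOTI} (here Theorem~\ref{bibl-auto}) with the argumentation from \cite[Theorem 2.10.3--Theorem 2.10.4]{nova-mono}, which is precisely your self-mapping step via Proposition~\ref{ravi-and-auto} together with the iterated-contraction estimate governed by $M_{n}$.
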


\begin{thm}\label{stepa-vuk-ght-auto-spore}
Let
$\rho_{1},\ \rho_{2} \in {\mathbb U}_{T}$ and and $1< p<\infty.$
Suppose that  \emph{(P)} holds, $\beta>\theta >1-\beta$ and the following conditions hold:
\begin{itemize}
\item[(i)]
$f : {\mathbb R}\times Y\rightarrow X$ is weighted $S^{p}$-pseudo almost automorphic, 
$
f(t,y)=g(t,y)+q(t,y),
$ $t\in {\mathbb R},$ where $g(\cdot,\cdot)$ is $S^{p}$-almost automorphic and $q(\cdot,\cdot)$ satisfies \eqref{stepa-auto-durak-two},
uniformly on bounded subsets of $Y.$ 
\item[(ii)] There exist two finite constants $L_{f}>0$ and $L_{g}>0$ such that
\eqref{minorcas}-\eqref{minorcas-ibiza} hold.
\end{itemize}
If
$
\frac{p}{p-1}(\beta -\theta-1)>-1
$
and $M_{n}<1$ for some $n\in {\mathbb N},$
then there exists a weighted pseudo-almost automorphic mild solution of inclusion \emph{\eqref{favini}}. The uniqueness of mild solutions holds provided that ${\mathcal A}$ is single-valued, additionally.
\end{thm}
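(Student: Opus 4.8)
The plan is to realize a weighted pseudo-almost automorphic mild solution of \eqref{favini} as a fixed point of the operator $\Lambda$ from Definition \ref{joka} on the space $WPAA({\mathbb R},Y,\rho_{1},\rho_{2})$, which is a Banach space under the sup-norm because $\rho_{1},\ \rho_{2}\in {\mathbb U}_{T}$, and in which $PAP_{0}({\mathbb R},Y,\rho_{1},\rho_{2})$ is translation invariant (the hypotheses of Proposition \ref{tra-inv-automorphic} being readily checked for $\rho_{1},\ \rho_{2}\in {\mathbb U}_{T}$). We view $(T(t))_{t>0}$ as a strongly continuous family in $L(X,Y)$; \emph{(A)} with $\nu=\theta$ gives $\|T(t)\|_{L(X,Y)}\leq Me^{-ct}t^{\beta-\theta-1}$, $t>0$, and the conditions $\beta>\theta>1-\beta$ and $\frac{p}{p-1}(\beta-\theta-1)>-1$ guarantee (with $1/p+1/q=1$) that $t\mapsto\|T(t)\|_{L(X,Y)}$ is $q$-integrable near $0$, while the factor $e^{-ct}$ forces $\sum_{k=0}^{\infty}\|T(\cdot)\|_{L^{q}[k,k+1]}<\infty$; these are precisely the quantitative inputs of Theorem \ref{xia-composition} and Proposition \ref{ravi-and-auto}.

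First I would show that $\Lambda$ maps $WPAA({\mathbb R},Y,\rho_{1},\rho_{2})$ into itself. Given $u\in WPAA({\mathbb R},Y,\rho_{1},\rho_{2})$, decompose $u=u_{1}+u_{2}$ with $u_{1}$ almost automorphic (hence with relatively compact range in $Y$) and $u_{2}\in PAP_{0}({\mathbb R},Y,\rho_{1},\rho_{2})$; this exhibits $u$ as a weighted $S^{p}$-pseudo almost automorphic $Y$-valued function whose $S^{p}$-almost automorphic part has relatively compact range. Hypotheses (i)--(ii) — in particular the global Lipschitz bounds \eqref{minorcas}, \eqref{minorcas-ibiza} — then place us in the setting of Theorem \ref{xia-composition}, whence $f(\cdot,u(\cdot))\in S^{p}WPAA({\mathbb R},X,\rho_{1},\rho_{2})$. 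Applying Proposition \ref{ravi-and-auto} (with $Y$ in place of $X$ and $R(\cdot)=T(\cdot)\in L(X,Y)$) shows that $\Lambda u$ is well-defined and belongs to $AA({\mathbb R},Y,\rho_{1},\rho_{2})+S^{p}WPAA_{0}({\mathbb R},Y,\rho_{1},\rho_{2})$; since $T(\cdot)$ carries $X$ into $Y$ with $q$-integrable operator norm, the ergodic summand is bounded and continuous into $Y$, and estimating its weighted mean by H\"older's inequality and the translation invariance of $PAP_{0}$, exactly as in the proof of Proposition \ref{ravi-and-auto}, shows that it lies in $PAP_{0}({\mathbb R},Y,\rho_{1},\rho_{2})$. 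Therefore $\Lambda u\in WPAA({\mathbb R},Y,\rho_{1},\rho_{2})$.

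Next I would iterate the kernel estimate. For $u,\ v\in WPAA({\mathbb R},Y,\rho_{1},\rho_{2})$, \eqref{minorcas} and \emph{(A)} yield $\|(\Lambda u)(t)-(\Lambda v)(t)\|_{Y}\leq ML_{f}\int_{-\infty}^{t}e^{-c(t-s)}(t-s)^{\beta-\theta-1}\|u(s)-v(s)\|_{Y}\, ds$; unwinding this bound $n$ times gives $\|(\Lambda^{n}u)(t)-(\Lambda^{n}v)(t)\|_{Y}\leq M_{n}\|u-v\|_{\infty}$ for all $t\in {\mathbb R}$, where $M_{n}$ is the constant of \eqref{em-en-a} with the constant function $L_{f}$ inserted there. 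Since $M_{n}<1$ for some $n\in {\mathbb N}$, the map $\Lambda^{n}$ is a strict contraction on the Banach space $WPAA({\mathbb R},Y,\rho_{1},\rho_{2})$, so $\Lambda$ has a unique fixed point in it — the desired weighted pseudo-almost automorphic mild solution of \eqref{favini}. If, in addition, ${\mathcal A}$ is single-valued, the kernel $T(\cdot)$ is an unambiguous $L(X,Y)$-valued family and the same contraction estimate holds on $C_{b}({\mathbb R}:Y)$, so the mild solution is unique; the remaining details parallel \cite{nova-mono}.

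The step I expect to be the main obstacle is the upgrade in the second paragraph: Proposition \ref{ravi-and-auto} only yields $\Lambda u$ in the sum space $AA+S^{p}WPAA_{0}$, whereas membership in $WPAA=AA+PAP_{0}$ forces one to promote the Stepanov-type ergodicity of the convolution to the pointwise weighted-mean condition defining $PAP_{0}({\mathbb R},Y,\rho_{1},\rho_{2})$. This is where the $X\to Y$ regularization of the resolvent family, the sharp integrability furnished by $\frac{p}{p-1}(\beta-\theta-1)>-1$, and the translation invariance of $PAP_{0}$ must be combined; all else is routine bookkeeping with the composition principle of Subsection \ref{PIVOTI} and the iteration of \emph{(A)}, in the spirit of \cite{nova-mono}.
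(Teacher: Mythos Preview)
Your proposal is correct and follows exactly the route the paper indicates: the paper omits the proof entirely, stating only that it ``can be deduced by using composition principles from Subsection \ref{PIVOTI} and the argumentation contained in \cite{nova-mono},'' and your outline --- apply Theorem \ref{xia-composition} to get $f(\cdot,u(\cdot))\in S^{p}WPAA$, feed this into the convolution via Proposition \ref{ravi-and-auto}, upgrade the Stepanov ergodic part to $PAP_{0}$ using the $X\to Y$ smoothing of $T(\cdot)$ together with translation invariance, and then iterate (A) to obtain the $M_{n}$-contraction --- is precisely that scheme. You have also correctly singled out the one genuinely non-formal step (the passage from $S^{p}WPAA_{0}$ to $PAP_{0}$ for the convolution), which is indeed where the hypothesis $\frac{p}{p-1}(\beta-\theta-1)>-1$ and the translation invariance of $PAP_{0}$ for weights in ${\mathbb U}_{T}$ do the real work; the rest is bookkeeping along the lines of \cite[Theorem 2.10.3--2.10.4]{nova-mono}.
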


\begin{thm}\label{be-spore-brze}
Let
$\rho_{1},\ \rho_{2} \in {\mathbb U}_{T}$ and $1< p<\infty.$ Suppose that  \emph{(P)} holds, $\beta>\theta >1-\beta$ and the following conditions hold:
\begin{itemize}
\item[(i)]
$f : {\mathbb R}\times Y\rightarrow X$ is weighted $S^{p}$-pseudo almost automorphic, 
$
f(t,y)=g(t,y)+q(t,y),
$ $t\in {\mathbb R},$ where $g(\cdot,\cdot)$ is $S^{p}$-almost automorphic and $q(\cdot,\cdot)$ satisfies \eqref{stepa-auto-durak-two},
uniformly on bounded subsets of $Y.$ 
\item[(ii)] Assume that  $ r\geq \max (p, p/p -1),$ $r>p/p-1$ and there exist two Stepanov $r$-almost automorphic scalar-valued functions $L_{f}(\cdot)$ and $L_{g}(\cdot)$ such that \eqref{minorca}-\eqref{minorca-ibiza} hold.
Set $q:=\frac{pr}{p+r}$ and $q':=\frac{pr}{pr-p-r}.$
\end{itemize}
If
$
q'(\gamma (\beta -\theta)-1)>-1
$
and $B_{n}<1$ for some $n\in {\mathbb N},$ then there exists an weighted pseudo-almost automorphic mild solution of inclusion \emph{\eqref{favini}}. The uniqueness of mild solutions holds in the case that ${\mathcal A}$ is single-valued.
\end{thm}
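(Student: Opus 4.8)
The fractional relaxation inclusion \eqref{left} is treated by a fixed point argument for the solution operator $\Lambda_{\gamma}$ of Definition~\ref{durak}, carried out in $WPAA({\mathbb R}, Y, \rho_{1}, \rho_{2})$ (a Banach space under the sup-norm since $\rho_{1},\ \rho_{2}\in {\mathbb U}_{T}$, and for which $PAP_{0}({\mathbb R}, Y, \rho_{1}, \rho_{2})$ is moreover translation invariant, cf. Proposition~\ref{tra-inv-automorphic}). Since the sought solution takes values in $Y=[D((-{\mathcal A})^{\theta})]$ while $f$ maps into $X$, the $Y$-norm estimates for $(\Lambda_{\gamma}u)(t)=\int_{-\infty}^{t}(t-s)^{\gamma-1}P_{\gamma}(t-s)f(s,u(s))\, ds$ are governed by the kernel $R_{\gamma}^{\theta}(\cdot)$, regarded as $L(X,Y)$-valued. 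The first step is to deduce, starting from (A) with $\nu=\theta$ and the subordination representation, a bound of the form $\|R_{\gamma}^{\theta}(t)\|\leq Ce^{-c't^{\gamma}}t^{\gamma(\beta-\theta)-1}$, $t>0$, and to observe that the exponents $q=\frac{pr}{p+r}$ and $q'=\frac{pr}{pr-p-r}$ are mutually conjugate with $q\in[1,p)$ (here $(r-1)(p-1)>1$, which is forced by $r>\frac{p}{p-1}$). Under the hypothesis $q'(\gamma(\beta-\theta)-1)>-1$ the singularity at the origin is $L^{q'}$-integrable and the exponential factor makes the tail summable, so $M:=\sum_{k=0}^{\infty}\|R_{\gamma}^{\theta}(\cdot)\|_{L^{q'}[k,k+1]}<\infty$; together with $L_{f}\in L_{S}^{r}\subseteq L_{S}^{q}$ and H\"older's inequality this also gives $\sup_{t\in {\mathbb R}}\int_{-\infty}^{t}\|R_{\gamma}^{\theta}(t-s)\|L_{f}(s)\, ds<\infty$, so that $\Lambda_{\gamma}$ is well defined on bounded $Y$-valued functions.

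Next I would verify that $\Lambda_{\gamma}$ maps $WPAA({\mathbb R}, Y, \rho_{1}, \rho_{2})$ into itself. Fix $u$ in this space, write $u=u_{1}+u_{2}$ with $u_{1}\in AA({\mathbb R}:Y)$ (hence with relatively compact range in $Y$) and $u_{2}\in PAP_{0}({\mathbb R}, Y, \rho_{1}, \rho_{2})$, and note that $u$ is then weighted $S^{p}$-pseudo almost automorphic. The composition principle Theorem~\ref{bibl-auto}, applied with pivot space $Y$ and using the Lipschitz bounds \eqref{minorca}--\eqref{minorca-ibiza} together with $r\geq \max(p,\frac{p}{p-1})$, gives $f(\cdot,u(\cdot))\in S^{q}WPAA({\mathbb R},X, \rho_{1}, \rho_{2})$. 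Then Proposition~\ref{ravi-and-auto}, with $p$ replaced by $q$ and with the operator family $R_{\gamma}^{\theta}(\cdot)$ acting from $X$ into $Y$ (admissible by the first paragraph and the translation invariance of $PAP_{0}$), shows that $\Lambda_{\gamma}u=\int_{-\infty}^{\cdot}R_{\gamma}^{\theta}(\cdot-s)f(s,u(s))\, ds$ is well defined and lies in $AA({\mathbb R},Y, \rho_{1}, \rho_{2})+S^{q}WPAA_{0}({\mathbb R},Y, \rho_{1}, \rho_{2})$. Since $R_{\gamma}^{\theta}(\cdot)\in L^{1}([0,\infty):L(X,Y))$ and $f(\cdot,u(\cdot))$ is $S^{q}$-bounded, $\Lambda_{\gamma}u$ is bounded and uniformly continuous; and for a bounded uniformly continuous function, membership in the Stepanov ergodic class $S^{q}WPAA_{0}$ forces membership in $PAP_{0}({\mathbb R},Y, \rho_{1}, \rho_{2})$. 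Hence $\Lambda_{\gamma}u\in WPAA({\mathbb R}, Y, \rho_{1}, \rho_{2})$, as required.

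For the contraction step, let $u,\ v\in WPAA({\mathbb R}, Y, \rho_{1}, \rho_{2})$. The pointwise estimate $\|f(s,u(s))-f(s,v(s))\|\leq L_{f}(s)\|u(s)-v(s)\|_{Y}$ from \eqref{minorca}, iterated through $\Lambda_{\gamma}$ and estimated under the integral sign in the $Y$-norm, yields
\begin{align*}
\bigl\|(\Lambda_{\gamma}^{n}u)(t)-(\Lambda_{\gamma}^{n}v)(t)\bigr\|_{Y}
&\leq \|u-v\|_{\infty}\int_{-\infty}^{t}\int_{-\infty}^{x_{n}}\cdots\int_{-\infty}^{x_{2}}\bigl\|R_{\gamma}^{\theta}(t-x_{n})\bigr\| \\
&\qquad \times \prod_{i=2}^{n}\bigl\|R_{\gamma}^{\theta}(x_{i}-x_{i-1})\bigr\|\prod_{i=1}^{n}L_{f}(x_{i})\, dx_{1}\cdots dx_{n}
\leq B_{n}\|u-v\|_{\infty}
\end{align*}
for every $t\in {\mathbb R}$, hence $\|\Lambda_{\gamma}^{n}u-\Lambda_{\gamma}^{n}v\|_{\infty}\leq B_{n}\|u-v\|_{\infty}$. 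Since $B_{n}<1$, the Weissinger fixed point theorem (a self-map some iterate of which is a strict contraction has a unique fixed point) produces a unique $u^{*}\in WPAA({\mathbb R}, Y, \rho_{1}, \rho_{2})$ with $\Lambda_{\gamma}u^{*}=u^{*}$, i.e. the desired weighted pseudo-almost automorphic mild solution of \eqref{left}; the uniqueness assertion under single-valuedness of ${\mathcal A}$ then follows exactly as in \cite[Theorem 2.10.10]{nova-mono}.

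The principal obstacle is the analytic input concentrated in the first step: passing from the Favini--Yagi type resolvent estimate in (P) (hence (A)) to the explicit kernel bound $\|R_{\gamma}^{\theta}(t)\|\leq Ce^{-c't^{\gamma}}t^{\gamma(\beta-\theta)-1}$ via the Wright-function subordination formula, together with the attendant exponent bookkeeping showing that $q'(\gamma(\beta-\theta)-1)>-1$ is precisely the condition needed both for $\sum_{k}\|R_{\gamma}^{\theta}(\cdot)\|_{L^{q'}[k,k+1]}<\infty$ and for the H\"older step in the composition--convolution part. Once those estimates are secured, the remainder is assembled from Theorem~\ref{bibl-auto}, Proposition~\ref{ravi-and-auto} and the Weissinger contraction scheme essentially as in \cite{nova-mono}.
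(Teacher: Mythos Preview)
Your proposal is correct and follows precisely the route the paper itself indicates: the paper omits the proof entirely, stating only that it ``can be deduced by using composition principles from Subsection \ref{PIVOTI} and the argumentation contained in \cite{nova-mono},'' and your outline does exactly that (Theorem \ref{bibl-auto} for the composition step, Proposition \ref{ravi-and-auto} for the convolution step, then an iterated-contraction fixed point argument governed by $B_{n}$). One small correction to your analytic input: the subordinated kernel $R_{\gamma}^{\theta}(t)$ does \emph{not} inherit exponential decay from (A) --- the Wright function $\Phi_{\gamma}$ is a probability density that does not vanish at the origin, so after subordination one obtains only a polynomial tail of order roughly $t^{-\gamma-1}$ at infinity; this is still enough for $\sum_{k\geq 0}\|R_{\gamma}^{\theta}(\cdot)\|_{L^{q'}[k,k+1]}<\infty$, but your stated bound $Ce^{-c't^{\gamma}}t^{\gamma(\beta-\theta)-1}$ should be replaced accordingly.
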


\begin{thm}\label{stepa-vuk-ght-auto-brze}
Let
$\rho_{1},\ \rho_{2} \in {\mathbb U}_{T}$ and and $1< p<\infty.$
Suppose that  \emph{(P)} holds, $\beta>\theta >1-\beta$ and the following conditions hold:
\begin{itemize}
\item[(i)]
$f : {\mathbb R}\times Y\rightarrow X$ is weighted $S^{p}$-pseudo almost automorphic, 
$
f(t,y)=g(t,y)+q(t,y),
$ $t\in {\mathbb R},$ where $g(\cdot,\cdot)$ is $S^{p}$-almost automorphic and $q(\cdot,\cdot)$ satisfies \eqref{stepa-auto-durak-two},
uniformly on bounded subsets of $Y.$ 
\item[(ii)] There exist two finite constants $L_{f}>0$ and $L_{g}>0$ such that
\eqref{minorcas}-\eqref{minorcas-ibiza} hold.
\end{itemize}
If
$
\frac{p}{p-1}(\gamma (\beta -\theta)-1)>-1
$
and $B_{n}<1$ for some $n\in {\mathbb N},$
then there exists a weighted pseudo-almost automorphic mild solution of inclusion \emph{\eqref{favini}}. The uniqueness of mild solutions holds provided that ${\mathcal A}$ is single-valued, additionally.
\end{thm}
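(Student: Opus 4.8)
The strategy is to obtain the mild solution as a fixed point of the operator $\Lambda_{\gamma}$ of Definition \ref{durak}, working on the Banach space $E$ consisting of all $Y$-valued weighted pseudo-almost automorphic functions on ${\mathbb R}$ equipped with the sup-norm; $E$ is complete because $\rho_{1},\ \rho_{2}\in{\mathbb U}_{T}$. This follows the scheme of \cite[Theorem 2.10.9--Theorem 2.10.10]{nova-mono}, which is why the structural proofs were suppressed in the text. The first task is to show $\Lambda_{\gamma}(E)\subseteq E$.

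Fix $u\in E$ and write $u=u_{1}+u_{2}$ with $u_{1}\in AA({\mathbb R} : Y)$ (so $u_{1}$ has relatively compact range in $Y$) and $u_{2}\in PAP_{0}({\mathbb R},Y,\rho_{1},\rho_{2})$; then $u$ is in particular weighted $S^{p}$-pseudo almost automorphic. Invoking the composition principle Theorem \ref{xia-composition} with $h:=u$, together with the Lipschitz hypotheses \eqref{minorcas}--\eqref{minorcas-ibiza}, gives $s\mapsto f(s,u(s))\in S^{p}WPAA({\mathbb R},X,\rho_{1},\rho_{2})$. Next I would apply the analogue of Proposition \ref{ravi-and-auto} for the strongly continuous family $(R_{\gamma}^{\theta}(t))_{t>0}$ in place of $(R(t))_{t>0}$: the estimate (A) with $\nu=\theta$, the subordination formula for $R_{\gamma}^{\theta}$ and the standard bound on the Wright function $\Phi_{\gamma}$ yield $\|R_{\gamma}^{\theta}(t)\|=O(t^{\gamma(\beta-\theta)-1})$ as $t\downarrow 0$ and a power decay at infinity, so the hypothesis $\frac{p}{p-1}(\gamma(\beta-\theta)-1)>-1$ (here the conjugate exponent of $p$ is $p/(p-1)$, since the Lipschitz constants are genuine constants) is exactly what makes $\sum_{k=0}^{\infty}\|R_{\gamma}^{\theta}(\cdot)\|_{L^{p/(p-1)}[k,k+1]}<\infty$, while $\beta>\theta>1-\beta$ keeps the exponent admissible. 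Since $PAP_{0}({\mathbb R},Y,\rho_{1},\rho_{2})$ is translation invariant for $\rho_{1},\ \rho_{2}\in{\mathbb U}_{T}$ (Proposition \ref{tra-inv-automorphic}), that proposition then shows that $(-{\mathcal A})^{\theta}\Lambda_{\gamma}u$ — equivalently $\Lambda_{\gamma}u$ read as a $Y$-valued map — decomposes into an almost automorphic part and an ergodic part; a uniform-convergence argument exactly as in the cited proposition shows the ergodic part is bounded and continuous, hence genuinely in $PAP_{0}({\mathbb R},Y,\rho_{1},\rho_{2})$, so $\Lambda_{\gamma}u\in E$.

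For the contraction, take $u,\ v\in E$; by \eqref{minorcas}, $\|f(s,u(s))-f(s,v(s))\|\le L_{f}\|u(s)-v(s)\|_{Y}$ for all $s$, and passing to the $Y$-norm in Definition \ref{durak} gives $\|(\Lambda_{\gamma}u)(t)-(\Lambda_{\gamma}v)(t)\|_{Y}\le L_{f}\int_{-\infty}^{t}\|R_{\gamma}^{\theta}(t-s)\|\,\|u(s)-v(s)\|_{Y}\,ds$. Iterating this inequality $n$ times and recognizing the resulting $n$-fold integral as the defining expression for $B_{n}$ yields $\|\Lambda_{\gamma}^{n}u-\Lambda_{\gamma}^{n}v\|_{\infty}\le B_{n}\|u-v\|_{\infty}$. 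Since $B_{n}<1$ for some $n\in{\mathbb N}$, $\Lambda_{\gamma}^{n}$ is a strict contraction on the Banach space $E$, so $\Lambda_{\gamma}^{n}$, and therefore $\Lambda_{\gamma}$ itself, has a unique fixed point in $E$; this fixed point is the sought weighted pseudo-almost automorphic mild solution of \eqref{left}. When ${\mathcal A}$ is single-valued, the mild solution is determined by this fixed point equation alone, so no other mild solutions exist and uniqueness follows.

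The step I expect to be the main obstacle is the convolution invariance: rigorously transferring Proposition \ref{ravi-and-auto} from a semigroup-type family to the singular, non-convolution kernel $R_{\gamma}^{\theta}$. This requires extracting the near-zero growth rate $t^{\gamma(\beta-\theta)-1}$ and the decay at infinity from (A) and the Wright-function asymptotics so as to verify the $L^{p/(p-1)}$-summability condition, and it requires checking that the ergodic component of the infinite convolution is actually continuous (i.e. lies in $PAP_{0}$, not merely in $S^{p}WPAA_{0}$) so that $\Lambda_{\gamma}$ honestly preserves $E$. Once this invariance is established, the Banach fixed point argument based on $B_{n}<1$ is entirely routine.
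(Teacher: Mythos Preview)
Your proposal is correct and follows essentially the same approach as the paper, which explicitly omits the proof and refers to the composition principles of Subsection~\ref{PIVOTI} together with the argumentation of \cite[Theorem~2.10.9--Theorem~2.10.10]{nova-mono}; you reproduce precisely that scheme (composition principle $\Rightarrow$ convolution invariance via the $L^{p/(p-1)}$-summability of $R_{\gamma}^{\theta}$ $\Rightarrow$ iterated contraction with constant $B_{n}$). You also correctly read the statement as concerning inclusion~\eqref{left} rather than~\eqref{favini}, which is a typographical slip in the paper, and you rightly flag the passage from $S^{p}WPAA_{0}$ to genuine $PAP_{0}$ for the ergodic part of $\Lambda_{\gamma}u$ as the only point requiring care.
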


The following theorem is an analogue of \cite[Theorem 2.12.5]{nova-mono}:

\begin{thm}\label{xia-composition}
Assume that $\rho_{1},\ \rho_{2}\in {\mathbb U}_{T},$ $1\leq p<\infty,$ 
and the following conditions hold:
\begin{itemize}
\item[(i)] $f \in PAA({\mathbb R} \times X , X,\rho_{1},\rho_{2})$ is weighted pseudo-almost automorphic.
\item[(ii)] The inequality \eqref{minorcas} holds with some bounded non-negative function $L_{f}(\cdot).$
\item[(iii)] $\sum_{n=1}^{\infty}M_{n}<\infty.$
\end{itemize}
Then there exists a unique weighted pseudo-almost automorphic solution of inclusion \emph{\eqref{favini}}.
\end{thm}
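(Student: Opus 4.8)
The plan is to obtain the solution as the unique fixed point, in the Banach space $WPAA({\mathbb R},X,\rho_{1},\rho_{2})$ equipped with the sup-norm, of the solution operator $\Lambda$ of Definition~\ref{joka}, and then to quote a generalized contraction principle (Weissinger's fixed point theorem), whose summability hypothesis is exactly condition (iii). Recall that $WPAA({\mathbb R},X,\rho_{1},\rho_{2})$ is a Banach space precisely because $\rho_{1},\rho_{2}\in{\mathbb U}_{T}$, and that this same membership secures the translation invariance of $PAP_{0}({\mathbb R},X,\rho_{1},\rho_{2})$ that will be needed in the convolution step (cf.\ Proposition~\ref{tra-inv-automorphic}).

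First I would check that $\Lambda$ maps $WPAA({\mathbb R},X,\rho_{1},\rho_{2})$ into itself. Let $u$ belong to this space. Since $L_{f}(\cdot)$ is bounded, \eqref{minorcas} holds with the finite constant $\|L_{f}\|_{\infty}$, so (i) together with Theorem~\ref{diagana-composition} shows that $s\mapsto f(s,u(s))$ is weighted pseudo-almost automorphic; write $f(\cdot,u(\cdot))=h(\cdot)+q(\cdot)$ with $h$ almost automorphic and $q\in PAP_{0}({\mathbb R},X,\rho_{1},\rho_{2})$. The kernel $(T(t))_{t>0}$ of $\Lambda$ obeys the bound (A) with $\nu=\theta$, and $\int_{0}^{\infty}\|T(t)\|\,dt<\infty$ (as it must be for $M_{1}$ in \eqref{em-en-a} to be finite). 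Consequently $\int_{-\infty}^{\cdot}T(\cdot-s)h(s)\,ds$ is almost automorphic, by the argument of \cite[Proposition 4]{element}, and $Q:=\int_{-\infty}^{\cdot}T(\cdot-s)q(s)\,ds$ belongs to $PAP_{0}({\mathbb R},X,\rho_{1},\rho_{2})$ by the sup-norm counterpart of Proposition~\ref{setnja} (decompose the kernel over the unit intervals $[k,k+1]$, use its $L^{1}$-integrability and the translation invariance of $PAP_{0}({\mathbb R},X,\rho_{1},\rho_{2})$, as in \cite{nova-mono}). Hence $\Lambda u\in WPAA({\mathbb R},X,\rho_{1},\rho_{2})$.

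Second I would derive the iteration estimate. For $u,v\in WPAA({\mathbb R},X,\rho_{1},\rho_{2})$, combining \eqref{minorcas} with (A) gives
$$
\bigl\|(\Lambda u)(t)-(\Lambda v)(t)\bigr\|\le M\int_{-\infty}^{t}e^{-c(t-s)}(t-s)^{\beta-\theta-1}L_{f}(s)\,ds\;\|u-v\|_{\infty},\qquad t\in{\mathbb R},
$$
and, by induction on $n$ (insert the bound for $\Lambda^{n-1}$ under the integral sign and relabel variables), $\|(\Lambda^{n}u)(t)-(\Lambda^{n}v)(t)\|\le M_{n}\|u-v\|_{\infty}$ for all $t\in{\mathbb R}$ and $n\in{\mathbb N}$, with $M_{n}$ the constant defined in \eqref{em-en-a}. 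Since $\sum_{n=1}^{\infty}M_{n}<\infty$ (so in particular $M_{n}<1$ for all large $n$), Weissinger's fixed point theorem yields a unique $u^{\ast}\in WPAA({\mathbb R},X,\rho_{1},\rho_{2})$ with $\Lambda u^{\ast}=u^{\ast}$; by Definition~\ref{joka} this $u^{\ast}$ is the unique weighted pseudo-almost automorphic mild solution of \eqref{favini}. No single-valuedness of ${\mathcal A}$ is required here, since $(T(t))_{t>0}\subseteq L(X)$ makes $\Lambda$ an honest self-map of a function space rather than a multivalued correspondence.

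The step I expect to be the main obstacle is the assertion $Q\in PAP_{0}({\mathbb R},X,\rho_{1},\rho_{2})$ in the first part. In contrast with the Besicovitch situation of Proposition~\ref{setnja}, where a reverse Fatou argument closes the estimate cleanly, here one must bound $\frac{1}{2\int_{-T}^{T}\rho_{1}(t)\,dt}\int_{-T}^{T}\|Q(t)\|\rho_{2}(t)\,dt$, and after applying Fubini and translating the inner convolution variable the decisive control of the weight ratios rests on the hypothesis $\rho_{1},\rho_{2}\in{\mathbb U}_{T}$; everything else is a routine transcription of the arguments in \cite{nova-mono} and \cite{element}.
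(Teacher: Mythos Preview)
Your proposal is correct and follows essentially the same route the paper indicates: the paper omits the proof, stating only that it is ``an analogue of \cite[Theorem 2.12.5]{nova-mono}'' and that such proofs ``can be deduced by using composition principles from Subsection~\ref{PIVOTI} and the argumentation contained in \cite{nova-mono}.'' Your scheme---apply Theorem~\ref{diagana-composition} to get $f(\cdot,u(\cdot))\in WPAA$, split into almost automorphic and ergodic parts, push each through the convolution (using translation invariance of $PAP_{0}$ from $\rho_{1},\rho_{2}\in{\mathbb U}_{T}$), then close with the iterated estimate $\|\Lambda^{n}u-\Lambda^{n}v\|_{\infty}\le M_{n}\|u-v\|_{\infty}$ and Weissinger's theorem via (iii)---is precisely that argumentation.
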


And, at the end of paper, a few words about situations in which we can apply our abstract results. As already mentioned in our previous researches, we can apply our results in the analysis of existence and uniqueness of weighted (asymptotically) almost automorphic solutions of the fractional Poisson heat equation
\[\left\{
\begin{array}{l}
D_{t,+}^{\gamma}[m(x)v(t,x)]=(\Delta -b )v(t,x) +f(t,x),\quad t\in {\mathbb R},\ x\in {\Omega};\\
v(t,x)=0,\quad (t,x)\in [0,\infty) \times \partial \Omega ,\\
\end{array}
\right.
\]
in the space $X:=L^{p}(\Omega),$ where $\Omega$ is a bounded domain in ${\mathbb R}^{n},$ $b>0,$ $m(x)\geq 0$ a.e. $x\in \Omega$, $m\in L^{\infty}(\Omega),$ $\gamma \in (0,1)$ and $1<p<\infty ,$ as well as its semilinear analogue
\[\left\{
\begin{array}{l}
D_{t,+}^{\gamma}[m(x)v(t,x)]=(\Delta -b )v(t,x) +f(t,m(x)v(t,x)),\ t\in {\mathbb R},\ x\in {\Omega};\\
v(t,x)=0,\quad (t,x)\in [0,\infty) \times \partial \Omega .\\
\end{array}
\right.
\]
Furthermore, we can analyze the existence and uniqueness of asymptotically $S^{p}$-almost automorphic solutions of the following fractional damped Poisson-wave type equation\index{equation!damped Poisson-wave} in the spaces $X:=H^{-1}(\Omega)$ or $X:=L^{p}(\Omega):$
\[\left\{
\begin{array}{l}
{\mathbf D}_{t}^{\gamma}\bigl( m(x){\mathbf D}_{t}^{\gamma}u\bigr)+\bigl(2\omega m(x)-\Delta \bigr){\mathbf D}_{t}^{\gamma}u+\bigl(A(x;D)-\omega \Delta+\omega^{2}m(x)\bigr)u(x,t)=f(x,t),\\ t\geq 0,\ x\in \Omega \ \ ; \ \
u={\mathbf D}_{t}^{\gamma}=0,\quad (x,t)\in \partial \Omega \times [0,\infty),\\
u(0,x)=u_{0}(x),\ m(x)\bigl[ {\mathbf D}_{t}^{\gamma}u(x,0)+\omega u_{0}\bigr]=m(x)u_{1}(x),\quad x\in {\Omega},
\end{array}
\right.
\]
where $\Omega \subseteq {\mathbb R}^{n}$ is a bounded open domain with smooth boundary, $1<p<\infty ,$ $m(x)\in L^{\infty}(\Omega),$ $m(x) \geq 0$ a.e. $x\in \Omega,$
$\Delta$ is the Dirichlet Laplacian in $ L^{2}(\Omega),$ acting with domain $H^{1}_{0}(\Omega) \cap H^{2}(\Omega),$
and $A(x;D)$ is a second order linear differential operator on $\Omega$ with coefficients continuous on $\overline{\Omega}$ and  the Caputo fractional derivative ${\mathbf D}_{t}^{\gamma}$ is taken in a slightly weakened sense \cite{afid-mlo};
see \cite[Example 6.1]{faviniyagi} and \cite[Example 6.2]{element} for more details. 

As already mentioned in the introductory part, our results can be applied to almost sectorial operators so that we can 
examine the existence and uniqueness of weighted pseudo-almost automorphic solutions for
certain classes of higher order (semilinear) elliptic differential equations in H\"older spaces (see e.g. W. von Wahl \cite{wolf}). Our results are also applicable in the study of generalized weighted pseudo-almost periodic solutions and generalized weighted pseudo-almost automorphic solutions of the following abstract integral inclusion
\begin{align*}
u(t)\in {\mathcal A}\int^{t}_{\infty}a(t-s)u(s)\, ds +f(t),\ t\in {\mathbb R}
\end{align*}
where $a \in L_{loc}^{1}([0,\infty)),$ $a\neq 0,$ $f : {\mathbb R} \rightarrow X$ satisfies certain assumptions and ${\mathcal A}$ is a closed multivalued linear operator on $X,$ as well as its semilinear analogue
\begin{align*}
u(t)\in {\mathcal A}\int^{t}_{\infty}a(t-s)u(s)\, ds +f(t,u(t)),\ t\in {\mathbb R}
\end{align*}
(see e.g. the papers by C. Cuevas, C. Lizama \cite{cuevas-lizama-duo} and H. R. Henr\' iquez, C. Lizama \cite{hernan-lizama}). The main results of paper \cite{abbas-indian} can be also fomulated for inclusions.

In \cite{gaston-marko}, G. M. N'Gu\' er\' ekata and M. Kosti\' c have recently studied various classes of generalized almost periodic and 
generalized almost automorphic solutions of abstract multi-term fractional differential inclusions in Banach spaces. We close the paper with the observation that Proposition \ref{Cauchy} can be applied in the qualitative analysis of the abstract multi-term Cauchy inclusion
\begin{align*}
\Bigl[u(\cdot)&- \bigl(
g_{\zeta+1+i}\ast f\bigr)(\cdot)Cx\Bigr]  +\sum
\limits_{j=1}^{n-1}c_{j}g_{\alpha_{n}-\alpha_{j}}\ast \Bigl[
u(\cdot)- \bigl(g_{\zeta+1+i}\ast f\bigr)(\cdot)Cx \Bigr]
\\ &+\sum \limits_{j\in {{\mathbb N}_{n-1}}\setminus D_{i}}c_{j}\bigl[g_{\alpha_{n}-\alpha_{j}+i+\zeta+1}\ast
f\bigr](\cdot)Cx \in 
{\mathcal A}\bigl[g_{\alpha_{n}-\alpha}\ast u\bigr](\cdot),
\end{align*}
where $0 \leq \alpha_{1}<\cdot \cdot
\cdot<\alpha_{n},$ $0\leq \alpha<\alpha_{n},$ $x\in X,$ $C\in L(X)$ is injective, $c_{j}\in {\mathbb C}$ for $1\leq j\leq n-1,$ $C{\mathcal A}\subseteq {\mathcal A}C,$ $f\in L_{loc}^{1}([0,\infty) : X)$ and the set $D_{i}$ has the same meaning as in \cite{gaston-marko} ($0\leq i \leq \lceil \alpha_{n}\rceil-1$).


\begin{thebibliography}{99}

\bibitem{sead-abas}
{\sc S. Abbas,}
{\it A note on Weyl pseudo almost automorphic
functions and their properties,}
Math. Sci. (Springer), {\bf 6:29} (2012), 5 pp, doi:10.1186/2251-7456-6-29.

\bibitem{abbas-indian}
{\sc S. Abbas, V. Kavitha, and R. Murugesu,}
\emph{Stepanov-like weighted pseudo almost automorphic solutions
to fractional order abstract integro-differential equations,}
Proc. Indian Acad. Sci. (Math. Sci.), {\bf 125} (2015), 323--351.

\bibitem{deda}
{\sc J. Andres, A. M. Bersani, and R. F. Grande,}
\emph{Hierarchy of almost-periodic function spaces,}
Rend. Mat. Appl., {\bf 26} (2006), 121--188.

\bibitem{a43}
{\sc W. Arendt, C. J. K. Batty, M. Hieber, F. Neubrander}, {\it
Vector-valued Laplace Transforms and Cauchy Problems}, Birkh\" auser
Verlag, Basel, 2001.

\bibitem{bajlekova}
{\sc E. Bazhlekova,}
{\it Fractional Evolution Equations in Banach Spaces,}
PhD Thesis, Eindhoven University of Technology, Eindhoven, 2001.

\bibitem{besik-fatajou}
{\sc F. Bedouhene, N. Challali, O. Mellah, P. Raynaud de Fitte, and M. Smaali,}
\emph{Almost automorphy and various extensions for stochastic processes,}
J. Math. Anal. Appl., {\bf 429} (2015), 1113--1152.

\bibitem{besik}
{\sc A. S. Besicovitch,}
\emph{Almost Periodic Functions,}
Dover Publications Inc., New York, 1954.

\bibitem{blot}
{\sc J. Blot, P. Cieutat, K. Ezzinbi,}
\emph{New approach for weighted pseudo-almost periodic functions
under the light of measure theory, basic results and applications,} 
Applicable Analysis, 1--34  (2011).

\bibitem{blot-weighted}
{\sc J. Blot, G. M. Mophou, G. M. N'Gu\' er\' ekata, and D. Pennequin,}
\textit{Weighted pseudo almost automorphic
functions and applications to abstract differential equations,}
Nonlinear Anal., {\bf 71} (2009), 903--909.

\bibitem{carol}
{\sc R. W. Carroll, R. W. Showalter,}
{\it Singular and
Degenerate Cauchy Problems,} Academic Press, New York, 1976.

\bibitem{chang-ejde}
\textsc{Y.-K. Chang, S. Zheng,}
{\it Weighted pseudo almost automorphic solutions to functional differential equations with infinite delay,}
Electron. J. Differential Equations, vol. 2016, no. {\bf 286} (2016), 1--18.

\bibitem{bian-filomat}
\textsc{Y.-K. Chang, Y.-T. Bian,}
\emph{Weighted asymptotic behavior of solutions to a Sobolev-type
differential equation with Stepanov coeffcients in Banach spaces,}
Filomat, {\bf 29} (2015), 1315--1328.

\bibitem{coronel}
\textsc{A. Coronel, M. Pinto, D. Sep\' ulveda,}
\emph{Weighted pseudo almost periodic functions, convolutions and abstract integral equations,}
J. Math. Anal. Appl., {\bf 435} (2016), 1382--1399.

\bibitem{cross}
\textsc{R. Cross,}
\emph{Multivalued Linear Operators,}
Marcel Dekker Inc., New York, 1998.

\bibitem{cuevas-lizama-duo}
\textsc{C. Cuevas, C. Lizama,}
\textit{Almost automorphic solutions to integral equations on the line,}
Semigroup Forum (2009) 79: 461. https://doi.org/10.1007/s00233-009-9154-0.

\bibitem{diagana}
{\sc T. Diagana,}
\emph{Almost Automorphic Type and Almost Periodic Type Functions in Abstract Spaces,}
Springer, New York, 2013.

\bibitem{tokos1}
\textsc{T. Diagana,}
{\it Weighted pseudo almost periodic functions and applications,}
C. R. Math. Acad. Sci. Paris, {\bf 343} (2006), 643--646.

\bibitem{opuscula}
{\sc T. Diagana, V. Nelson, and G.\,M.\,N'Gu\' er\' ekata,}
\emph{Stepanov-like $C^{(n)}$-pseudo-almost automorphy
and applications to some nonautonomous higher-order differential equations,}
Opuscula Math., \textbf{32} (2012), 455--471.

\bibitem{diagana-stepanoff}
{\sc T. Diagana and R. Agarwal,}
\emph{Existence of pseudo almost automorphic
solutions for the heat equation with $S^{p}$-pseudo
almost automorphic coefficients,}
Boundary Value Problems, vol. 2009, Article ID 182527, 19 pages,
doi:10.1155/2009/182527.

\bibitem{diagana-definicija}
{\sc T. Diagana,}
{\it Existence of pseudo-almost automorphic solutions to some abstract differential
equations with $S^{p}$-pseudo-almost automorphic coefficients,} 
Nonlinear Anal., {\bf 70}
(2009), 3781--3790.

\bibitem{diagana-double}
\textsc{T. Diagana,}
\emph{Existence of doubly-weighted pseudo almost periodic solutions to non-autonomous differential equations,}
Electron. J. Differential Equations, vol. 2011, no. {\bf 28} (2017), 1--15.  

\bibitem{diagana-doublep}
\textsc{T. Diagana,}
\emph{Existence of doubly-weighted pseudo almost periodic solutions to non-autonomous differential equations,}
Afr. Diaspora J. Math., {\bf 12} (2011), 121--136. 

\bibitem{diagana-doublepp}
\textsc{T. Diagana, K. Ezzinbi, M. Miraoui,}
\emph{Pseudo-almost periodic and pseudo-almost automorphic
solutions to some evolution equations involving
theoretical measure theory,}
Cubo, {\bf 16} (2014), 01--31.

\bibitem{Diet}
{\sc K. Diethelm,}
{\it The Analysis of Fractional Differential Equations,}
Springer-Verlag, Berlin, 2010.

\bibitem{xl-fany}
{\sc Z. Fan, J. Liang, T.-J. Xiao,}
{\it On Stepanov-like (pseudo) almost automorphic functions,}
Nonlinear Anal., {\bf 74} (2011), 2853--2861.

\bibitem{faviniyagi}
{\sc A. Favini, A. Yagi,}
{\it Degenerate Differential Equations in Banach Spaces,}
Chapman and Hall/CRC
Pure and Applied Mathematics, New York, 1998.

\bibitem{gaston}
{\sc G. M. N'Gu\' er\' ekata,}
{\it Almost Automorphic and Almost Periodic Functions
in Abstract Spaces,}
Kluwer Acad. Publ., Dordrecht, 2001.

\bibitem{gaston-marko}
{\sc G. M. N'Gu\' er\' ekata and M. Kosti\' c,}
{\it Generalized  asymptotically almost periodic and generalized asymptotically almost automorphic solutions of abstract multi-term fractional differential inclusions,} Abstract Appl. Anal., submitted.

\bibitem{gaston-pankov}
G. M. N'Gu\' er\' ekata, A. Pankov,
{\it Stepanov-like almost automorphic functions and monotone
evolution equations,}
Nonlinear Anal., {\bf 68} (2008), 2658–-2667.

\bibitem{hernan-lizama}
\textsc{H. R. Henr\' iquez, C. Lizama,}
\textit{Compact almost automorphic solutions to integral equations with infinite delay,}
Nonlinear Anal., {\bf 71} (2009), 6029--6037.

\bibitem{dji-weighted}
\textsc{D. Ji, Ch. Zhang,}
\emph{Translation invariance of weighted pseudo almost periodic functions and related problems,}
J. Math. Anal. Appl., {\bf 391} (2012), 350--362.

\bibitem{kilbas}
\textsc{A. A. Kilbas, H. M. Srivastava, J. J. Trujillo,} \emph{Theory and
Applications of Fractional Differential Equations,} Elsevier Science
B.V., Amsterdam, 2006.

\bibitem{knjigaho}
{\sc M. Kosti\' c}, {\it
Abstract Volterra Integro-Differential Equations}, CRC Press, Boca Raton, Fl., 2015.

\bibitem{EJDE}
{\sc M. Kosti\'c,}
{\it Existence of generalized almost periodic and asymptotic almost periodic
solutions to abstract Volterra integro-differential equations,} Electron. J. Differential Equations, vol. 2017, no. {\bf 239} (2017), 1--30.

\bibitem{afid-mlo}
{\sc M. Kosti\' c,}
{\it Abstract degenerate fractional differential inclusions,}
Appl. Anal. Discrete Math., \textbf{11} (2017), 39--61.

\bibitem{nova-mono}
{\sc M. Kosti\'c,}
\emph{Abstract Volterra Integro-Differential Equations: Almost Periodicity and Asymptotically Almost Periodic Properties of Solutions,}
Book Manuscript, 2017.

\bibitem{FKP}
{\sc M. Kosti\' c}, {\it
Abstract Degenerate Volterra Integro-Differential Equations: Linear Theory and Applications},
Book Manuscript, 2016.

\bibitem{element}
{\sc M. Kosti\' c}, 
{\it Generalized almost automorphic and generalized asymptotically almost automorphic solutions of abstract Volterra integro-differential inclusions,}
Fractional Diff. Calc., submitted.

\bibitem{weyl}
\textsc{M. Kosti\'c},
\emph{Weyl-almost periodic solutions and asymptotically Weyl-almost periodic solutions of abstract Volterra integro-differential equations,}
J. Math. Anal. Appl., submitted.

\bibitem{NSJOM-besik}
\textsc{M. Kosti\'c},
\emph{On Besicovitch-Doss almost periodic solutions of abstract Volterra integro-differential equations,}
Novi Sad J. Math., submitted.

\bibitem{me152} {\sc I. V. Melnikova, A. I. Filinkov,} {\it Abstract Cauchy
Problems: Three Approaches}, Chapman and Hall/CRC, Boca Raton, 2001.

\bibitem{relaxation-peng}
{\sc J. Mu, Y. Zhoa, and L. Peng,}
\emph{Periodic solutions and $S$-asymptotically periodic solutions to
fractional evolution equations,} Discrete Dyn. Nat. Soc.,
vol. 2017, Article ID 1364532, 12 pages,
https://doi.org/10.1155/2017/1364532.

\bibitem{pb1}
\textsc{F. Periago, B. Straub}, \emph{A functional calculus for
almost sectorial operators and applications to abstract evolution
equations}, J. Evol. Equ., \textbf{2} (2002), 41--68.

\bibitem{prus}
{\sc J. Pr\"uss,}
{\it Evolutionary Integral Equations and
Applications,}
Birkh\"auser-Verlag, Basel, 1993.

\bibitem{samko}
{\sc S. G. Samko, A. A. Kilbas, O. I. Marichev,}
{\it Fractional Derivatives and Integrals: Theory and Applications,}
Gordon and Breach, New York, 1993.


\bibitem{svir-fedorov}
{\sc G. A. Sviridyuk, V. E. Fedorov,}
{\it Linear Sobolev Type Equations and Degenerate Semigroups of Operators,}
Inverse and Ill-Posed Problems (Book {\bf 42}), VSP, Utrecht, Boston, 2003.

\bibitem{wolf}
{\sc W. von Wahl,}
{\it Gebrochene Potenzen eines elliptischen
Operators und parabolische Differentialgleichungen in R\"aumen
h\"olderstetiger Funktionen,}
Nachr. Akad. Wiss. G\"ottingen
Math.-Phys. Kl., {\bf 11} (1972), 231--258.

\bibitem{stepanoff-xia}
{\sc Z. Xia and M. Fan,}
\emph{Weighted Stepanov-like pseudo almost
automorphy and applications,}
Nonlinear Anal., {\bf 75} (2012), 2378--2397.

\bibitem{xlz-pseudo}
{\sc T.-J. Xiao, J. Liang, and J. Zhang,}
\emph{Pseudo-almost automorphic solutions to semilinear differential
equations in Banach spaces,}
Semigroup Forum, {\bf 76} (2006), 518--524.

\bibitem{30}
{\sc S. Zaidman,}
\emph{Almost-Periodic Functions in Abstract Spaces,} Pitman Research Notes in
Math., vol. {\bf 126}, Pitman, Boston, 1985.

\bibitem{comp-weighted}
\textsc{J. Zhang, T.-J. Xiao, J. Liang,}
\emph{Weighted pseudo almost-periodic functions and
applications to semilinear evolution equations,}
Abstract Appl. Anal., vol. 2012, art. ID 179525, 15 pp.,
doi:10.1155/2012/179525.

\end{thebibliography}
\end{document}